\setlist{nolistsep}
 \setlist{  
  listparindent=\parindent,
  parsep=0pt,
}
\newtheorem{theorem}{Theorem}[section]
\newtheorem{lemma}[theorem]{Lemma}
\newtheorem*{claim*}{Claim}
\newtheorem{cor}[theorem]{Corollary}
\newtheorem{conjecture}[theorem]{Conjecture}
\newtheorem{prop}[theorem]{Proposition}
\theoremstyle{definition}
\newenvironment{subproof}[1][Proof]{\begin{proof}[#1]}{\end{proof}}
\newcommand{\hd}{{\textup{\texttt{head}}}}
\newcommand{\tl}{{\textup{\texttt{tail}}}}
\newcommand{\PP}{\mathrm{PP}}
\newcommand{\CDG}{\mathrm{CDG}}
\newcommand{\supp}{\mathrm{supp}}
\newcommand{\vodd}{v_{\mathrm{odd}}}
\newcommand{\ennd}{\mathrm{end}}
\tikzstyle{knode}=[circle,draw=black,thick,inner sep=2pt]
\tikzstyle{klet}=[circle,draw=black,thick,inner sep=2pt]
\tikzstyle{kfill}=[circle,draw=black,thick,inner sep=2pt,fill=black]
\tikzstyle{klabel}=[rectangle,draw=black,thick,inner sep=2pt,align=center]
\title{Improved Decomposition Bounds for Partition Polytopes and Odd-Covers}
\author{Steffen Borgwardt\thanks{\texttt{steffen.borgwardt@ucdenver.edu}; University of Colorado Denver, Department of Mathematical and Statistical Sciences} \and Zden\v{e}k Dvo\v{r}\'{a}k\thanks{\texttt{rakdver@iuuk.mff.cuni.cz}; Charles University, Prague} \and Bryce Frederickson\thanks{\texttt{bfrede4@emory.edu}; Emory University, Department of Mathematics} \and Abigail Nix\thanks{\texttt{abigail.nix@ucdenver.edu};  University of Colorado Denver, Department of Mathematical and Statistical Sciences} \and Youngho Yoo\thanks{\texttt{yyoo.math@gmail.com}; Texas A\&M University, Department of Mathematics}}
\date{}
\begin{document}

\maketitle

\begin{abstract}
    The assignments of a set of $m$ items into $n$ clusters of prescribed sizes $\kappa_1,\dots,\kappa_n$ can be encoded as the vertices of the \emph{partition polytope} $\mathrm{PP}(\kappa_1,\dots,\kappa_n)$. We prove that, if $K = \max\{\kappa_1,\dots,\kappa_n\}$, then the combinatorial diameter of $\mathrm{PP}(\kappa_1,\dots,\kappa_n)$ is at most $\lceil 3K/2\rceil$. This improves the previously known upper bound of $2K$. 

    A \emph{cycle} (resp.~\emph{path}) \emph{odd-cover} of a graph $G$ is a set $\mathcal{H}$ of cycles (resp.~paths) such that $G$ is the symmetric difference of the graphs in $\mathcal{H}$.
    We prove that every Eulerian graph $G$ with maximum degree $\Delta$ admits a cycle odd-cover and a path odd-cover, each of size at most $\lceil 3\Delta/4\rceil$. This improves the previously known upper bound of $\Delta$. 

    The two proofs share many similarities and are both based on the proof of Akiyama, Exoo, and Harary that every graph with maximum degree 4 has linear arboricity at most 3.\\\\
    \noindent{\bf MSC.} 05C38, 05C62, 05C70, 52B05\\
    \noindent{\bf Keywords.} graph decomposition, path and cycle cover, odd-cover, combinatorial diameter, partition polytopes
    
\end{abstract}

\section{Introduction}
    Decomposition problems in graph theory typically seek to express the edge set of a graph or digraph $G$ as a disjoint union of edge sets of subgraphs of $G$ such that each subgraph belongs to a fixed family of graphs, and to minimize the number of such subgraphs required to decompose $G$.
    For example, Erd\H{o}s and Gallai~\cite{erdHos1983some} (resp.~Gallai~\cite{lovasz1968covering}) conjectured in the 1960's that every Eulerian graph (resp.~connected graph) on $n$ vertices can be decomposed into at most $O(n)$ cycles (resp.~$\lceil\frac n2\rceil$ paths). 
    In a similar vein, Akiyama, Exoo, and Harary~\cite{AHE81} conjectured in 1981 that every graph with maximum degree $\Delta$ can be decomposed into at most $\lceil\frac{\Delta+1}2\rceil$ \emph{linear forests} (forests whose connected components are all paths). These are classical and long-standing open problems in graph theory that have drawn widespread attention. Directed analogues of these problems have also been studied~\cite{bollobas1996proof, alspach1974path, nakayama1987linear}.

    In this paper, we study two independently motivated variants of the above problems where we seek to express a given (di)graph $G$ as a certain ``product'' or ``sum'' of cycles or paths, rather than as a disjoint union. The first variant comes from the study of the combinatorial diameters of \emph{partition polytopes}, where we seek to express the ``difference'' between two partitions of a set with the same cluster sizes, represented as an Eulerian digraph, as a minimum length sequence of cyclic exchanges, represented by directed cycles (see \Cref{sec:intropp}). The second variant comes from the study of \emph{odd-covers} where we seek to express a graph as a symmetric difference of a minimum set of cycles or paths in the underlying complete graph (see \Cref{sec:introodd}).

    Our main results are improved bounds for these two problems. Both proofs are based on a proof of Akiyama, Exoo, and Harary~\cite{AHE81} that every graph with maximum degree 4 has linear arboricity at most 3. 
    
\vspace*{0.4cm}
     \noindent
    \textbf{Notation.} A \emph{graph} refers to a simple, undirected graph with no loops or multi-edges. We generally assume that graphs have no isolated vertices, and they may be identified with their edge sets. Thus, for any set~$E$ of edges in the complete graph~$K_n$, we may write~$V(E)$ for the set of vertices incident to some edge in~$E$. 
   We denote the \emph{symmetric difference} of two sets $A$ and $B$ by $A \oplus B := (A \setminus B) \cup (B \setminus A)$. In particular, the \emph{symmetric difference} of two graphs $H_1$ and $H_2$ is the graph $H_1 \oplus H_2$ with edge set $E(H_1) \oplus E(H_2)$. We write~$\Delta(G)$ for the maximum degree of a graph~$G$ and~$\Delta_e(G) := 2\lceil \Delta(G)/2\rceil$ for the smallest even integer satisfying~$\Delta(G) \leq \Delta_e(G)$. We write~$\vodd(G)$ for the number of vertices in~$G$ with odd degree. If every vertex of a graph~$G$ has even degree (i.e.,~$\vodd(G) = 0$), then~$G$ is called \emph{Eulerian} (we do not require~$G$ to be connected). A \emph{polycycle} is a graph in which every component is a cycle. The \emph{distance} between two vertices in a connected graph $G$ is the length of the shortest path between them. The \emph{diameter} of $G$ is the maximum distance between any two vertices in $G$. 
   
   A \emph{digraph} refers to a directed multigraph, possibly with loops. Formally, we represent a digraph as a tuple~$G = (V, E, \tl, \hd)$, where~$\tl$ and~$\hd$ are functions~$E \to V$. We think of each edge~$e \in E$ as an arc from the vertex~$\tl(e) \in V$ to the vertex~$\hd(e) \in V$. The digraph~$G$ is called \emph{Eulerian} if, for every vertex~$v \in V$, the \emph{out-degree}~$\deg^+(v) := |\tl^{-1}(v)|$ is equal to the \emph{in-degree}~$\deg^-(v) := |\hd^{-1}(v)|$. We define a \emph{directed polycycle} as a digraph in which every component is a directed cycle. 

    The \emph{support} of a permutation~$\pi \in S_m$, denoted~$\supp(\pi)$, is the set of~$j \in [m]$ with~$\pi(j) \neq j$. A permutation $\sigma \in S_m$ is called a \emph{cycle} if there is an ordering $x_1, \ldots, x_t$ of its support such that $\sigma(x_i) = x_{i+1}$ for all $1 \leq i \leq t-1$, and $\sigma(x_t) = x_1$. In this case, we write $\sigma = (x_1 \: \cdots \: x_t)$. As a convention, we consider the identity permutation to be a cycle, which we call \emph{trivial}, and we may denote it as $(x)$ for any $x \in [m]$. Given a function~$p : [m] \to [n]$, we say that~$\pi \in S_m$ is \emph{$p$-balanced} if~$p$ is injective on~$\supp(\pi)$. If~$\sigma \in S_m$ is a cycle that is~$p$-balanced, then~$\sigma$ is called a \emph{$p$-cycle}.

    \subsection{Diameters of partition polytopes}\label{sec:intropp}
    
    The first problem arises naturally in the context of linear optimization for the balanced partitioning of a data set, and the diameters of the corresponding polytopes. 
    The study of the \emph{combinatorial diameters} of polyhedra, i.e.,~the diameters of their edge graphs, is a classic topic, in particular due to its connection to the open question of the possibility of a polynomial pivot rule for the famous Simplex method. 

    Let $m, n, \kappa_1, \ldots, \kappa_n$ be nonnegative integers with 
    $\kappa_1 + \cdots + \kappa_n = m$, and consider the task of partitioning a set of $m$ items into $n$ clusters of prescribed sizes $\kappa_1,\dots,\kappa_n$. The feasible solutions to this problem correspond to the vertices of the \emph{partition polytope} in $\mathbb R^{nm}$ with parameters $\kappa_1, \ldots, \kappa_n$, denoted $\PP(\kappa_1, \ldots, \kappa_n)$, which is described by the following linear constraints: 
  \begin{align*}
    \sum_{j=1}^{m} y_{ij}  &= \kappa_i \;\, \ \text{ for all } 1\leq i \leq n,\\
    \sum_{i=1}^{n} y_{ij}  &= 1 \;\;\;\ \text{ for all }  1\leq j \leq m,\\
    y_{ij} &\geq 0 \;\;\ \ \text{ for all }  1\leq i \leq n, \text{ }   1\leq j \leq m.
\end{align*}
   The first set of constraints guarantees that each cluster receives the prescribed number of items, and the second set of constraints ensures that each item is assigned to one cluster.
   Partition polytopes lie between two well-known classes of polytopes. They generalize the \emph{Birkhoff polytope} (or \emph{assignment polytope}), which represents permutations, i.e.,~one-to-one assignments between two sets of $m$ items ($n=m$ and $\kappa_i=1$ for $i\in[n]$); they also specialize transportation polytopes through the specification of one set of margins as $1$. 

The second set of constraints, together with the nonnegativity constraints, implies an upper bound of $y_{ij}  \leq 1$ for all $i,j$. It is well-known that the underlying constraint matrix is totally unimodular, hence the vertices of the partition polytope $\PP(\kappa_1, \ldots, \kappa_n)$ are integral and in one-to-one correspondence with the feasible assignments of items to clusters. More precisely, an \emph{$(m,n)$-partition} is a function $p:[m]\to[n]$, and its \emph{shape} is the $n$-tuple $(|p^{-1}(1)|,\dots,|p^{-1}(n)|)$ of cluster sizes. The \emph{incidence matrix} of an $(m,n)$-partition $p$ is the matrix $M_p=(y_{ij})\in\{0,1\}^{nm}$ with $y_{ij}=1$ if and only if $p(j)=i$. The vertices of $\PP(\kappa_1,\dots,\kappa_n)$ are exactly the incidence matrices of $(m,n)$-partitions with shape $(\kappa_1,\dots,\kappa_n)$.

To describe the edges of $\mathrm{PP}(\kappa_1, \ldots, \kappa_n)$, we use the terminology of clustering difference graphs as in~\cite{borgwardt2013diameter}, which we now define. Given two $(m,n)$-partitions $p, p' : [m] \to [n]$, the  \emph{clustering difference graph} (or \emph{CDG}, for short) from $p$ to $p'$ is the digraph $\mathrm{CDG}(p, p'):=([n],[m], p, p')$. Figure \ref{fig:CDGexample} depicts an example of a clustering difference graph. The vertices of $\mathrm{CDG}(p, p')$ are the clusters, and for each item $j$ there is an edge from $p(j)$ to $p'(j)$ representing the assignment of $j$ to the cluster $p(j)$ that must be moved to the cluster $p'(j)$.  If $p(j)=p'(j)$, then the edge corresponding to $j$ is a loop, and we omit loops in our figures. Note that if $p$ has shape $(\kappa_1,\dots,\kappa_n)$ and $p'$ has shape $(\kappa_1',\dots,\kappa_n')$, then for each $i\in[n]$, the node $i$ in $\mathrm{CDG}(p, p')$ has out-degree $\kappa_i$ and in-degree $\kappa_i'$. In particular, if $p$ and $p'$ have the same shape, then $\mathrm{CDG}(p,p')$ is Eulerian.

\begin{figure}[t]
     \centering
     \begin{subfigure}[b]{\textwidth}
         \centering
\subcaptionbox{Two $(9,4)$-partitions $p$ (left) and $p'$ (right). The items $1,\dots,9$ are assigned to $4$ clusters, as indicated by the circles.}{\begin{tikzpicture}[vertices/.style={draw, fill=black, circle, inner sep=0pt, minimum size = 4pt, outer sep=0pt}]

\path[use as bounding box] (-1,-1.5) rectangle (11,4.6);

\node[knode,minimum height=0.75cm] (c_1) at (0, 3.4) {\small $1,2$};
\node[knode,minimum height=0.75cm] (c_2) at (0, 0) {\small $3,4$};
\node[knode,minimum height=0.75cm] (c_3) at (2.5, 3.4) {\small $5,6$};
\node[knode,minimum height=0.75cm] (c_4) at (2.5, 0) {\small $7,8,9$};

\node[knode,minimum height=0.75cm] (c_5) at (8, 3.4) {\small $3,4$};
\node[knode,minimum height=0.75cm] (c_6) at (8, 0) {\small $1,7$};
\node[knode,minimum height=0.75cm] (c_7) at (10.5, 3.4) {\small $2,8$};
\node[knode,minimum height=0.75cm] (c_8) at (10.5, 0) {\small $5,6,9$};

\node[above=14pt] at (c_1) {1}; 
\node[below=14pt] at (c_2) {2}; 
\node[above=14pt] at (c_3) {3}; 
\node[below=17pt] at (c_4) {4}; 
\node[above=14pt] at (c_5) {1}; 
\node[below=14pt] at (c_6) {2}; 
\node[above=14pt] at (c_7) {3}; 
\node[below=17pt] at (c_8) {4}; 

\draw[dashed, ->] (4,1.7) -- (6.5,1.7);

\end{tikzpicture}}
        \end{subfigure}
        \hfill
        \begin{subfigure}[b]{\textwidth}
         \centering
\subcaptionbox{The clustering difference graph $\mathrm{CDG}(p,p')$. Each item $j\in\{1,\dots,9\}$ corresponds to an edge from $p(j)$ to $p'(j)$. Item 9 corresponds to a loop on cluster 4, which is omitted from the figure.}{
\begin{tikzpicture}[vertices/.style={draw, fill=black, circle, inner sep=0pt, minimum size = 4pt, outer sep=0pt}]

\path[use as bounding box] (-4.75,-1) rectangle (6.75,4.6);

\node[knode,minimum height=0.25cm] (c_1) at (0, 3.4) {};
\node[knode,minimum height=0.25cm] (c_2) at (0, 0) {};
\node[knode,minimum height=0.25cm] (c_3) at (2.5, 3.4) {};
\node[knode,minimum height=0.25cm] (c_4) at (2.5, 0) {};

\node[above=6pt] at (c_1) {1}; 
\node[below=6pt] at (c_2) {2}; 
\node[above=6pt] at (c_3) {3}; 
\node[below=6pt] at (c_4) {4}; 

\path[draw=black, thick,  ->, >=latex]  (c_3) edge[bend left=13] node[right]{$5$} (c_4);
\path[draw=black, thick,  ->, >=latex]  (c_4) edge[bend left=13] node[left]{$8$} (c_3);
\path[draw=black, thick,  ->, >=latex]  (c_1) edge[bend left=13] node[right]{$1$} (c_2);
\path[draw=black, thick,  ->, >=latex]  (c_2) edge[bend left=13] node[left]{$3$} (c_1);
\path[draw=black, thick,  ->, >=latex]  (c_2) edge[bend left=48] node[left]{$4$} (c_1);
\path[draw=black, thick,  ->, >=latex]  (c_3) edge[bend left=48] node[right]{$6$} (c_4);
\path[draw=black, thick,  ->, >=latex]  (c_1) edge node[below]{$2$} (c_3);
\path[draw=black, thick,  ->, >=latex]  (c_4) edge node[above]{$7$} (c_2);

\end{tikzpicture}}
        \end{subfigure}
     \caption{Illustration of two $(9,4)$-partitions $p$ and $p'$ and the corresponding clustering difference graph $\mathrm{CDG}(p,p')$.} 
        \label{fig:CDGexample}
\end{figure}

     It follows readily from the characterization of edges of general transportation polytopes in~\cite{kw-68} that two vertices $M_p$ and $M_{p'}$ of $\PP(\kappa_1, \ldots, \kappa_n)$ are joined by an edge if and only if $\mathrm{CDG}(p, p')$ (suppressing loops) is a simple directed cycle (see~\cite{borgwardt2013diameter}). Hence, an edge walk in $\PP(\kappa_1, \ldots, \kappa_n)$ takes the form of a sequence of `cyclic exchanges' of items between clusters as indicated by such cycles. In other words, two vertices $M_p$ and $M_{p'}$ of $\PP(\kappa_1, \ldots, \kappa_n)$ are joined by an edge if and only if there exists a non-trivial $p$-cycle $\tau\in S_m$ such that $p'=p\tau$. More generally, an edge walk in $\PP(\kappa_1, \ldots, \kappa_n)$ from $M_p$ to $M_{p'}$ corresponds to a sequence $(p_0, \ldots, p_t)$ of $(m,n)$-partitions such that $p_0 = p$, $p_t = p'$, and for each $i \in [t]$, we have $p_i = p_{i-1}\tau_i$ for some $p_{i-1}$-cycle $\tau_i$. We call such a sequence $(p_0,\dots,p_t)$ a \emph{resolution} of $(p,p')$ of length $t$, and we say that $(\tau_1, \ldots, \tau_t)$ \emph{encodes} $(p_0, \ldots, p_t)$. Hence the diameter of $\PP(\kappa_1,\dots,\kappa_n)$ is the maximum, over all pairs of $(m,n)$-partitions $p$ and $p'$ with shape $(\kappa_1,\dots,\kappa_n)$, of the shortest length of a resolution of $(p,p')$.

     There is yet another way to describe an edge walk from $M_p$ to $M_{p'}$ in $\PP(\kappa_1, \ldots, \kappa_k)$ which will be useful for some of our explanations. We say that a sequence $(\sigma_1, \ldots, \sigma_t)$ of $p$-cycles is a \emph{$p$-cycle decomposition} of $\mathrm{CDG}(p,p')$ of length $t$ if $p' = p\sigma_t \cdots \sigma_1$. We will later see that decompositions of $\mathrm{CDG}(p,p')$ of length $t$ are in one-to-one correspondence with resolutions of $(p,p')$ of length $t$ (see \Cref{prop:diameter reformulations}), hence determining the diameter of a partition polytope can be seen as a $p$-cycle decomposition problem for Eulerian digraphs. \Cref{fig:lemma 3.2} shows an example of a $p$-cycle decomposition. 

\begin{figure}[h!]
     \centering
     \begin{subfigure}[b]{\textwidth}
         \centering
\subcaptionbox{An Eulerian digraph $G$ representing moves in the permutation $\pi = (x^1_1 \: \cdots \: x^1_4)(x^2_1 \: \cdots \: x^2_5)(x^3_1 \: \cdots \: x^3_3)$. Here, $p$ is the identity map on $\{x^1_1, \ldots, x^1_4, x^2_1, \ldots, x^2_5, x^3_1, \ldots, x^3_3\}$, $p' = p\pi$, and $G = \mathrm{CDG}(p,p')$. \label{fig:pi}}{
\begin{tikzpicture}

\path[use as bounding box] (-0.5,-0.5) rectangle (11,3.5);
            \node[knode] (x1^1) at (0,0) {\small $x_1^1$};
            \node[knode] (x2^1) at (0,2) {\small $x_2^1$};
            \node[knode] (x3^1) at (2,2) {\small $x_3^1$};
            \node[knode] (x4^1) at (2,0) {\small $x_4^1$};
            
            \node[knode] (x1^2) at (4.5,0) {\small $x_1^2$};
    	\node[knode] (x2^2) at (4,2) {\small $x_2^2$};
            \node[knode] (x3^2) at (5.5,3) {\small $x_3^2$};
            \node[knode] (x4^2) at (7,2) {\small $x_4^2$};
            \node[knode] (x5^2) at (6.5,0) {\small $x_5^2$};
            
            \node[knode] (x1^3) at (8.5,0) {\small $x_1^3$};
			\node[knode] (x2^3) at (9.5,2) {\small $x_2^3$};
            \node[knode] (x3^3) at (10.5,0) {\small $x_3^3$};
            
			\draw[->, thick] (x1^1) to (x2^1);
            \draw[->, thick] (x2^1) to (x3^1);
            \draw[->, thick] (x3^1) to (x4^1);
			\draw[->, thick] (x4^1) to (x1^1);

            \draw[->, thick] (x1^2) to (x2^2);
            \draw[->, thick] (x2^2) to (x3^2);
            \draw[->, thick] (x3^2) to (x4^2);
            \draw[->, thick] (x4^2) to (x5^2);
            \draw[->, thick] (x5^2) to (x1^2);

            \draw[->, thick] (x1^3) to (x2^3);
            \draw[->, thick] (x2^3) to (x3^3);
            \draw[->, thick] (x3^3) to (x1^3);
			
		\end{tikzpicture}}
        \end{subfigure}
        \hfill
        \begin{subfigure}[b]{\textwidth}
         \centering
\subcaptionbox{A $p$-cycle decomposition $(\sigma_1, \sigma_2)$ of the directed polycycle $G$ above. The black, solid edges correspond to $\sigma_1 = (x^1_1 \: \cdots \: x^1_4 \: x^2_1 \: \cdots \: x^2_5 \: x^3_1 \: \cdots \: x^3_3)$, and the orange, wavy edges correspond to $\sigma_2 = (x^3_1 \: x^2_1 \: x^1_1)$. Each object is moved to its target location by following a black, then an orange edge (if available), so $p' = p\sigma_2\sigma_1$ is satisfied. \label{fig:3.2sigma1}}{
\begin{tikzpicture}
\path[use as bounding box] (-0.5,-2) rectangle (11,3.5);
            \node[knode] (x1^1) at (0,0) {\small $x_1^1$};
            \node[knode] (x2^1) at (0,2) {\small $x_2^1$};
            \node[knode] (x3^1) at (2,2) {\small $x_3^1$};
            \node[knode] (x4^1) at (2,0) {\small $x_4^1$};
            
            \node[knode] (x1^2) at (4.5,0) {\small $x_1^2$};
    	\node[knode] (x2^2) at (4,2) {\small $x_2^2$};
            \node[knode] (x3^2) at (5.5,3) {\small $x_3^2$};
            \node[knode] (x4^2) at (7,2) {\small $x_4^2$};
            \node[knode] (x5^2) at (6.5,0) {\small $x_5^2$};
            
            \node[knode] (x1^3) at (8.5,0) {\small $x_1^3$};
			\node[knode] (x2^3) at (9.5,2) {\small $x_2^3$};
            \node[knode] (x3^3) at (10.5,0) {\small $x_3^3$};
            
			\draw[->, thick] (x1^1) to (x2^1);
            \draw[->, thick] (x2^1) to (x3^1);
            \draw[->, thick] (x3^1) to (x4^1);
			\draw[->, thick] (x4^1) to (x1^2);

            \draw[->, thick] (x1^2) to (x2^2);
            \draw[->, thick] (x2^2) to (x3^2);
            \draw[->, thick] (x3^2) to (x4^2);
            \draw[->, thick] (x4^2) to (x5^2);
            \draw[->, thick] (x5^2) to (x1^3);

            \draw[->, thick] (x1^3) to (x2^3);
            \draw[->, thick] (x2^3) to (x3^3);
            \draw[->, thick] (x3^3) to [out = -120, in = -80, looseness=0.5] (x1^1);

 \tikzset{decoration={snake,amplitude=.4mm,segment length=2mm, post length=1mm, pre length=0mm}}

\draw[->, orange, decorate, thick] (x1^1) to [out = -55, in = -125, looseness=0.5] (x1^3);
\draw[->, orange, decorate, thick] (x1^3) to [out = -150, in = -30, looseness=0.7] (x1^2);
\draw[->, orange, decorate, thick] (x1^2) to [out = -150, in = -30, looseness=0.7] (x1^1);
            
		\end{tikzpicture}}
        \end{subfigure}
     \caption{A $p$-cycle decomposition of a CDG. } 
        \label{fig:lemma 3.2}
\end{figure}
 
Unlike for Birkhoff polytopes, which have diameter $2$~\cite{br-74}, and transportation polytopes, which meet the Hirsch bound~\cite{bdf-17}, the diameters of partition polytopes are not fully understood. If $\kappa_1 \geq \cdots \geq \kappa_n \geq 0$, then it is known that the diameter of $\PP(\kappa_1, \ldots, \kappa_n)$ is at most $\kappa_1 + \kappa_2$~\cite{borgwardt2013diameter}, i.e., the sum of the sizes of the two largest clusters. Note that the property $\kappa_1 \geq \cdots \geq \kappa_n$ is not a restriction and can be achieved through relabeling. This bound $\kappa_1 + \kappa_2$ recovers the bound $1+1=2$ for Birkhoff polytopes, and it is dramatically lower than the Hirsch bound known to be tight for general transportation polytopes, which for partition polytopes takes the form $m+n-1$ (for $\kappa_n>0$). In this work, we improve this upper bound as follows.

    \begin{theorem}\label{thm:improved upper bound}
	Let $\kappa_1 \geq \cdots \geq \kappa_n$ be nonnegative integers. Then the partition polytope $\PP(\kappa_1, \ldots, \kappa_n)$ has diameter at most $\kappa_1 + \lceil \kappa_2/2 \rceil$.
    \end{theorem}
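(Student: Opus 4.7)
The plan is to reformulate the diameter bound as a decomposition problem on the CDG and then exploit the AEH-style structure alluded to in the introduction.

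By Proposition~3.2 (the correspondence between resolutions and $p$-cycle decompositions), it suffices to show that for any pair $(p,p')$ of $(m,n)$-partitions of shape $(\kappa_1,\ldots,\kappa_n)$, the Eulerian digraph $G := \CDG(p,p')$ admits a $p$-cycle decomposition of length at most $\kappa_1 + \lceil \kappa_2/2\rceil$. Recall that in $G$, vertex $i$ has in- and out-degree~$\kappa_i$, so vertex~$1$ carries the ``bottleneck'' of $\kappa_1$ arcs in and out, and vertex~$2$ is second-worst.

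I would handle vertex~$1$ and vertex~$2$ simultaneously, rather than sequentially. The rough strategy is: extract exactly $\kappa_1$ simple directed cycles through vertex~$1$, arranged so that as many of them as possible also pass through vertex~$2$; then a second batch of cycles, avoiding vertex~$1$ entirely, mops up the remaining arcs at vertex~$2$ and beyond. Since each simple cycle through both vertices~$1$ and~$2$ consumes one in-arc and one out-arc at each, if at least $\lfloor \kappa_2/2\rfloor$ of the $\kappa_1$ first-batch cycles pass through vertex~$2$, then the residual digraph $G'$ (an Eulerian digraph on $[n]\setminus\{1\}$) has in- and out-degree at most $\lceil \kappa_2/2\rceil$ at vertex~$2$ (and smaller elsewhere). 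Decomposing $G'$ into at most $\lceil \kappa_2/2\rceil$ further simple cycles, using an ordinary greedy extraction at vertex~$2$, would then yield the desired bound $\kappa_1 + \lceil \kappa_2/2\rceil$.

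The main obstacle, and where the AEH-style argument enters, is producing the first batch of cycles with the pairing property. There is no reason a greedy extraction at vertex~$1$ would visit vertex~$2$ often enough; worse, there are structural cases (e.g., when arcs at vertex~$2$ are concentrated among neighbors that are hard to ``thread'' through vertex~$1$) that obstruct naive constructions. Following the AEH template for $\Delta=4$, I would therefore take an Eulerian circuit of an auxiliary subdigraph of~$G$ containing all arcs at vertices~$1$ and~$2$, alternately color its arcs into two classes so that each class induces in/out-degree halves at vertex~$2$, and then use the two halves to engineer cycles that pair vertex-$1$ transits with vertex-$2$ transits. The subtle parity bookkeeping (which is the reason for the ceiling $\lceil \kappa_2/2\rceil$ rather than $\kappa_2/2$) will mimic exactly the parity argument in AEH.

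Finally, once the cycles in $G$ are in hand, their conversion into an honest $p$-cycle decomposition is automatic from Proposition~3.2: each simple directed cycle lifts to a $p_i$-cycle in the evolving partition, and the conjugation trick built into the correspondence then re-expresses the whole sequence in terms of $p$-cycles with respect to the original~$p$, without changing the length. Thus the bound on the number of simple cycles translates directly into the bound on diameter.
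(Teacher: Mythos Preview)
Your approach has a genuine gap: you are effectively trying to edge-decompose $G=\CDG(p,p')$ into at most $\kappa_1+\lceil\kappa_2/2\rceil$ simple directed cycles, and this is impossible in general. Concretely, take $n=4$ with $\kappa_1=\kappa_2=\kappa_3=\kappa_4=2$ and let $G$ be two vertex-disjoint $2$-cycles, each with edge-multiplicity~$2$ (one on $\{1,2\}$, one on $\{3,4\}$). Your first batch of $\kappa_1=2$ cycles through vertex~$1$ can only be the two copies of the $2$-cycle on $\{1,2\}$; both pass through vertex~$2$, so the residual $G'$ is exactly the two copies of the $2$-cycle on $\{3,4\}$. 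You then need $\lceil\kappa_2/2\rceil=1$ further simple cycle, but $G'$ requires two. The parenthetical ``and smaller elsewhere'' is the unsupported step: removing cycles through vertex~$1$ gives no control over the degrees at vertices disjoint from those cycles.

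What the paper does instead is \emph{not} an edge-disjoint cycle decomposition of $G$. It first decomposes $G$ into $\kappa_2$ directed polycycles and $\kappa_1-\kappa_2$ directed cycles (the Petersen-type lemma), then replaces each \emph{pair} of polycycles by three $p$-cycles via the key Lemma~3.1. The crucial point is that the three $p$-cycles produced by Lemma~3.1 do not have disjoint supports and do not trace edge-disjoint subgraphs of $G$: they move some items to intermediate clusters before a later cycle moves them to their targets (only $p\sigma_3\sigma_2\sigma_1=p\pi_2\pi_1$ holds, not $\sigma_3\sigma_2\sigma_1=\pi_2\pi_1$). It is precisely this freedom to pass through non-target clusters that lets three $p$-cycles handle the example above, and this is where the AEH-style idea actually enters---in the proof of Lemma~3.1, not in extracting cycles from $G$. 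Your final paragraph's ``conjugation trick'' only translates between the resolution and $p$-cycle-decomposition pictures; it cannot reduce the count, so the flaw is upstream.
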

In terms of the largest cluster size $\kappa_1$, Theorem \ref{thm:improved upper bound} gives an upper bound of $\lceil 3\kappa_1/2\rceil$ on the diameter of $\PP(\kappa_1, \ldots, \kappa_n)$, which improves on the previous bound $2\kappa_1$ of~\cite{borgwardt2013diameter}.
We present a proof of Theorem \ref{thm:improved upper bound} in Section \ref{sec:upperboundpp}. The key argument is the following special case of the theorem when $\kappa_1 = 2$. 

    \begin{theorem}\label{thm:out-degree 2}
        Let $\kappa_1 \geq \cdots \geq \kappa_n$ be nonnegative integers with $\kappa_1=2$. Then the partition polytope $\PP(\kappa_1, \ldots, \kappa_n)$ has diameter at most 3.
    \end{theorem}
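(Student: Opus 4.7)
The plan is to construct, for any pair $p, p'$ with shape $(\kappa_1, \ldots, \kappa_n)$ and $\kappa_1 = 2$, a resolution $(p = p_0, p_1, p_2, p_3 = p')$ of length at most $3$. Let $G := \CDG(p, p')$ (with loops suppressed). Since every cluster has at most $2$ items, $G$ is an Eulerian digraph with maximum in-degree and out-degree at most~$2$, so the underlying multigraph has maximum degree at most~$4$, matching the AEH setting.

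First I would handle the easy cases. If $G$ can be decomposed into at most $3$ edge-disjoint simple directed cycles $C_1, \ldots, C_t$ with $t \le 3$, the associated cycles in $S_m$ yield a valid resolution of length $t$ directly: each $C_i$ is a simple cycle in $\CDG(p_{i-1}, p')$, so the associated cycle in $S_m$ is a $p_{i-1}$-cycle. This already handles the case in which $G$ is a disjoint union of at most three simple directed cycles.

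For the general case---where many edge-disjoint simple directed cycles are needed to decompose $G$, possibly because $G$ has many components or many vertices of in/out-degree~$2$---I would use the extra freedom of applying a $p$-cycle that is not a cycle of $G$. Specifically, I would choose $\sigma_1$ so that its support corresponds to a simple directed cycle $C$ in the \emph{cluster graph} (the complete digraph on $[n]$) that is not necessarily contained in $G$. The cycle $C$ traverses some $G$-edges (for which the associated items are resolved by $\sigma_1$) and a few ``shortcut'' arcs not in $G$ (on which any item from the tail cluster is chosen and gets temporarily displaced). The aim is to pick $C$ and the shortcut items so that $\CDG(p\sigma_1, p')$ is a union of at most two edge-disjoint simple directed cycles, which then furnish the remaining two steps of the resolution.

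The main obstacle I expect is proving the existence of such a cycle $C$: one must show that, for every Eulerian digraph $G$ with maximum in- and out-degree~$2$, a cycle $C$ in the cluster graph together with compatible shortcut items can be chosen so that the resulting CDG has the required simple structure. This is where the ideas of Akiyama--Exoo--Harary enter. Following their analysis of degree-$4$ graphs---where they exhibit a structured set of edges whose removal leaves a graph decomposable into two linear forests---one would perform a case analysis on the connected-component structure of $G$, on the positions of the vertices of in/out-degree~$2$, and on the pairings of in- and out-edges at those vertices. The availability of compatible shortcut items is facilitated by the fact that every cluster of size~$2$ contains a ``spare'' item usable as a displacement carrier, and that distinct connected components of $G$ occupy disjoint cluster sets, automatically providing the injectivity required for the $p$-cycle property on any chosen items.
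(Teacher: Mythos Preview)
Your proposal is a plan rather than a proof: the decisive step---the existence of a first $p$-cycle $\sigma_1$ (built from $G$-edges plus ``shortcuts'') such that $\CDG(p\sigma_1,p')$ decomposes into at most two edge-disjoint \emph{simple directed} cycles---is asserted but not established. You write that ``one would perform a case analysis'' in the spirit of Akiyama--Exoo--Harary, but you do not carry it out, and it is not clear that the AEH argument transfers: AEH produces three \emph{undirected linear forests}, whereas what you need after the first step is that the residual Eulerian digraph (with max in/out-degree still $2$) splits into at most two \emph{simple directed cycles}. These are different structural conclusions, and there is no black-box reduction between them. In particular, after a shortcut move, the displaced item creates a new arc in the residual CDG; controlling where that arc lands so that the result is a union of $\le 2$ simple directed cycles is exactly the heart of the matter, and your proposal gives no mechanism for this.

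The paper's route is different and avoids this difficulty entirely. It first applies the Petersen-type decomposition (\Cref{lem:Petersen generalization directed}/\Cref{cor:decomposition into balanced permutations}) to write the whole CDG as two edge-disjoint directed polycycles, i.e.\ $p' = p\pi_2\pi_1$ with $\pi_1,\pi_2$ $p$-balanced and of disjoint supports. The real content is then \Cref{lem:two permutations three cycles}: \emph{any} such product can be rewritten as $p\sigma_3\sigma_2\sigma_1$ with three $p$-cycles. The construction is completely explicit. One picks, for each non-trivial cycle of $\pi_1$ and of $\pi_2$, a pair of clusters it meets; these pairs form two matchings on $[n]$, and a $2$-colouring of their union (\Cref{lem:disjoint representative vertices}) assigns to each cycle a ``start'' element so that the starts coming from $\pi_1$ and those from $\pi_2$ lie in disjoint colour classes. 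With these starts, $\sigma_1$ concatenates all cycles of $\pi_1$ into one long $p$-cycle, $\sigma_2$ does the same for $\pi_2$ (spliced to $\pi_1$ at a single shared cluster), and $\sigma_3$ is a short correction cycle on the start elements; the colour separation guarantees $\sigma_3$ is $p$-balanced. No case analysis on the component structure of $G$ is needed.

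So the gap in your approach is precisely the unproven existence of the ``good'' first cycle; the paper sidesteps it by working at the level of $p$-balanced permutations and a matching $2$-colouring rather than trying to simplify the CDG in one step.
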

    This latter result is best possible. For example, if $n=4$ and $\kappa_1=\kappa_2=\kappa_3=\kappa_4=2$, then there are pairs $p,p'$ of $(8,4)$-partitions such that $\CDG(p,p')$ is the vertex-disjoint union of two cycles each of length $2$ and each with edge-multiplicity $2$; in this case, there does not exist a resolution of $(p,p')$ of length less than 3. 
    It was noted in~\cite{borgwardt2013diameter} that this construction can be generalized to show that $\PP(\kappa_1, \ldots, \kappa_n)$ can have diameter as large as $\lceil 4\kappa_1/3 \rceil$. We present a more general version of this lower bound based on all values of $\kappa_i$ in Section \ref{sec:lowerboundpp}.
    However, in terms of $\kappa_1$, there is still a gap between the known lower bound $\lceil 4\kappa_1/3\rceil$ and our new upper bound $\lceil 3\kappa_1/2\rceil$ on the diameter of $\PP(\kappa_1,\dots,\kappa_n)$. We suspect that the true diameter of $\PP(\kappa_1,\cdots,\kappa_n)$ is closer to the lower bound.
    
    \begin{conjecture}\label{conj:pp diameter}
        There exists a constant $c$ such that for any positive integers $n$ and $\kappa_1 \geq \cdots \geq \kappa_n$, the partition polytope $\PP(\kappa_1, \ldots, \kappa_n)$ has diameter at most $\lceil 4\kappa_1/3 \rceil + c$.
    \end{conjecture}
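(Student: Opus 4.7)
The plan is to prove Conjecture 1.3 by strengthening the inductive scheme behind Theorem 1.1. The proof of Theorem 1.1 uses the base case Theorem 1.2 ($\kappa_1 = 2 \Rightarrow$ diameter $\leq 3$) combined with an inductive step that reduces $\kappa_1$ by $2$ at a cost of $3$ p-cycles, giving the $3/2$ ratio. To reach a $4/3$ ratio, I would replace this with a base case giving $4$ p-cycles at $\kappa_1 = 3$ and an inductive step that reduces $\kappa_1$ by $3$ at a cost of $4$ p-cycles, yielding the bound $\lceil 4\kappa_1/3\rceil + c$ by iteration.

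More concretely, I would first prove the base case: if $\kappa_1 \leq 3$, then $\PP(\kappa_1, \ldots, \kappa_n)$ has diameter at most $4$. This is the direct analogue of Theorem 1.2 one level up, and it is tight by the $\lceil 4\kappa_1/3\rceil$ lower bound construction evaluated at $\kappa_1 = 3$. I would then establish an inductive ``peeling'' lemma: for every pair of $(m,n)$-partitions $p, p'$ with shape $(\kappa_1, \ldots, \kappa_n)$ and $\kappa_1 \geq 4$, there is a resolution of length at most $4$ from $p$ to some intermediate partition $p''$ whose (sorted) shape $(\kappa_1'', \ldots, \kappa_n'')$ satisfies $\kappa_1'' \leq \kappa_1 - 3$. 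Applying the inductive hypothesis to $\CDG(p'', p')$ then closes the induction.

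The main obstacle is the peeling lemma. The AHE-inspired argument behind Theorem 1.1 pairs directed cycles in an Eulerian cycle decomposition of $\CDG(p,p')$ and converts each pair into $3$ p-cycles while consuming $2$ units of out-degree at every vertex. To improve this ratio, I would try to identify \emph{triples} of directed cycles whose combined edge set admits a p-cycle decomposition of length $4$ rather than the naive $6$ obtained by processing each cycle separately. Guaranteeing the existence of enough such triples --- and stitching them into a global scheme that drops $\kappa_1$ by exactly $3$ --- will likely require a Hall-type matching argument on the set of directed cycles together with a substantially more intricate case analysis than the one used for Theorem 1.2, since the combinatorial types of triples one must handle grow rapidly.

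I expect the base case $\kappa_1 = 3$ to be the main technical bottleneck, as it already encodes the full combinatorial essence of the $4/3$ ratio: in particular, it must handle tight instances such as the vertex-disjoint union of directed $2$-cycles with edge multiplicity $3$, the natural $\kappa_1 = 3$ analogue of the tight construction used for Theorem 1.2. A softer alternative, which suffices for Conjecture 1.3 since the statement only asks for \emph{some} constant $c$, is to bypass the sharp base case and instead prove a weaker base case at a larger value of $\kappa_1$ (paying with a larger $c$) or to run a discharging/amortization argument over longer resolutions to spread the $4/3$ cost globally rather than locally.
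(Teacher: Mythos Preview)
The statement you are attempting to prove is a \emph{conjecture}, and the paper does not claim a proof of it; there is nothing to compare your proposal against. More importantly, your plan contains a fatal gap: the base case you propose is false. You write that you would first prove ``if $\kappa_1 \leq 3$, then $\PP(\kappa_1, \ldots, \kappa_n)$ has diameter at most $4$,'' calling this ``the direct analogue of Theorem~1.2 one level up.'' But the paper explicitly refutes this statement: Lemma~\ref{lem:diamof33pp} shows that $\PP(3,3,3,3,3,3)$ has diameter exactly $5$, and the discussion immediately following Conjecture~\ref{conj:pp diameter} in the introduction points to precisely this example as the reason the natural inductive approach fails. Your tight instance ``the vertex-disjoint union of directed $2$-cycles with edge multiplicity $3$'' is exactly the CDG analyzed in that lemma, and it does \emph{not} admit a resolution of length $4$.

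Your fallback suggestion of a ``weaker base case at a larger value of $\kappa_1$'' does not obviously salvage the scheme either: the same counting obstruction that forces diameter $5$ at $\kappa_1=3$ (each cyclic exchange can contribute at most $3n/2$ to the progress measure $s_i$, cf.\ the proof of Lemma~\ref{lem:disjoint 2-cycles lower bound}) does not go away at larger $\kappa_1$, and there is no indication that the peeling lemma you sketch---reducing $\kappa_1$ by $3$ at a cost of $4$ $p$-cycles---holds for any $\kappa_1$. The paper's authors state plainly that ``new techniques will be needed to resolve Conjecture~\ref{conj:pp diameter},'' and your proposal does not supply one.
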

    One natural approach to proving this conjecture would be to prove a statement analogous to \Cref{thm:out-degree 2}, namely that if $\kappa_1\geq\cdots\geq\kappa_n$ and $\kappa_1=3$, then $\PP(\kappa_1,\cdots,\kappa_n)$ has diameter at most 4. Unfortunately, this statement is not true, as we will demonstrate with an explicit example in Section \ref{sec:lowerboundpp}. Thus, new techniques will be needed to resolve \Cref{conj:pp diameter}.

    \subsection{Odd-covers of graphs} \label{sec:introodd}

    Our second setting arises from the study of odd-covers of graphs. For a graph $G \subseteq K_n$ and a collection $\mathcal{H}=\{H_1,\dots,H_k\}$ of subgraphs of $K_n$, we say that $\mathcal{H}$ is an \emph{odd-cover} of $G$ if $G=H_1\oplus\dots\oplus H_k$ (that is, an edge is in $G$ if and only if it is in an odd number of graphs in $\mathcal{H}$). As a special case, $\mathcal H$ is called a \emph{decomposition} of $G$ if $H_1, \ldots, H_k$ are pairwise edge-disjoint (and so $G = H_1 \cup \cdots \cup H_k$, in particular).
    The notion of an odd-cover was introduced by Babai and Frankl~\cite{babai1988linear} who posed the problem of determining the minimum cardinality of an odd-cover $\mathcal{H}$ of $K_n$ such that $\mathcal{H}$ consists solely of complete bipartite graphs. 
    This problem is an $\mathbb{F}_2$-analogue of the celebrated Graham-Pollak theorem~\cite{graham1971, graham1972}; see~\cite{buchanan2024odd, buchanan2022odd, leader2024odd} for recent progress on this odd-cover problem. 

    In this paper, we are interested in the problem of determining the minimum cardinality of an odd-cover of a given graph consisting solely of cycles or of paths. To be precise, given a subgraph $G$ of a larger host graph $F$, a \emph{cycle odd-cover} (resp.~\emph{path odd-cover}) of $G$ in $F$ is an odd-cover $\mathcal H$ of $G$ in which every $H \in \mathcal H$ is a cycle (resp.~path) in $F$. If the host graph $F$ is not specified, we take $F$ to be the complete graph on $V(G)$. Note that we allow our cycles (resp.~paths) to include edges that are not in the original graph $G$. These problems were introduced in~\cite{BBCFRY23} as $\mathbb{F}_2$-analogues of the conjectures of Erd\H{o}s and Gallai~\cite{erdHos1983some} and of Gallai~\cite{lovasz1968covering} on decompositions of graphs into cycles and into paths. 
    The cycle odd-cover problem was also studied implicitly in~\cite{fan2003covers} where it is shown that every Eulerian graph $G$ on $n$ vertices admits a cycle odd-cover of size at most $\lfloor \frac{n-1}2\rfloor$ consisting of cycles in $G$; this was used to confirm a conjecture of Chung \cite{chung1980coverings} that every Eulerian graph $G$ admits a cycle \emph{cover} (i.e., a collection of cycles whose union is $G$) of size at most $\lfloor\frac{n}2\rfloor$.
    
    Note that a graph $G$ admits a cycle odd-cover if and only if $G$ is Eulerian, but every graph admits a path odd-cover. It was also observed in~\cite{BBCFRY23} that path and cycle odd-cover problems are in some sense undirected analogues of diameter problems on partition polytopes, and by adapting 
    techniques for proving diameter bounds on partition polytopes~\cite{borgwardt2013diameter, bv-19a} to undirected graphs, it was proved that every Eulerian graph $G$ has a cycle odd-cover of cardinality at most $\Delta(G)$, the maximum degree of $G$, and that every graph $G$ has a path odd-cover of cardinality at most $\max\left\{\frac{v_{\text{odd}}(G)}2, \Delta_e(G)\right\}$, where $v_{\text{odd}}(G)$ denotes the number of vertices of odd degree in $G$, and $\Delta_e(G) := 2\left \lceil \Delta(G)/2 \right \rceil$. 

    Another interesting observation in~\cite{BBCFRY23} is that path odd-covers are strengthenings of linear forest decompositions (recall that a \emph{linear forest} is a graph whose connected components are all paths). The \emph{linear arboricity} of a graph $G$, denoted $\mathrm{la}(G)$, is the minimum cardinality of a linear forest decomposition of $G$. If $\mathcal H = \{P_1, ,\ldots, P_k\}$ is a path odd-cover of $G$, then $G$ also admits a linear forest decomposition of the same cardinality $k$, obtained by replacing each path $P_i$ in $\mathcal{H}$ by the linear forest $(P_i\cap G) \setminus(P_1 \cup \cdots \cup P_{i-1})$. Thus the minimum size of a path odd-cover of $G$ is at least $\mathrm{la}(G)$. The authors of~\cite{BBCFRY23} proved a partial converse of this statement for Eulerian graphs: for every Eulerian graph $G \subseteq K_n$, there exists $n' \geq n$ such that $G$ admits a path odd-cover in $K_{n'}$ of size $\mathrm{la}(G)$ (and also a cycle odd-cover in $K_{n'}$ of size at most $\mathrm{la}(G)$). These observations connect linear arboricity to path and cycle odd-covers, which are in turn related to diameters of partition polytopes. In this work, we strengthen these connections by refining a proof of Akiyama, Exoo, and Harary~\cite{AHE81} that every graph $G$ with $\Delta(G)=4$ has linear arboricity at most 3 to obtain the following result on path and cycle odd-covers; our proof of Theorem \ref{thm:out-degree 2} is also based on this proof of~\cite{AHE81}.
    \begin{theorem}\label{thm:oddcover3}
        Let $G$ be an Eulerian graph with $\Delta(G)=4$. Then $G$ admits a path odd-cover of size at most~$3$ and a cycle odd-cover of size at most~$3$. 
    \end{theorem}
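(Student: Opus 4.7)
The plan is to adapt the Akiyama-Exoo-Harary technique for bounding linear arboricity. First, by suppressing every degree-2 vertex of $G$ (replacing the two incident edges by a single edge between its neighbors), we obtain a 4-regular multigraph $G^*$. Petersen's 2-factor theorem gives an edge-disjoint decomposition $G^* = F_1^* \cup F_2^*$ into two 2-factors, which lifts back to a decomposition $G = F_1 \cup F_2$ into edge-disjoint polycycles that are 2-regular on the degree-4 vertices of $G$.

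The key combinatorial lemma I will establish is that every polycycle $P$ with $k \geq 2$ components is the symmetric difference of two cycles in $K_{V(P)}$. Concretely, fix cyclic orderings $C_i = v_{i,1} v_{i,2} \cdots v_{i,t_i}$ of the components; let $A$ be the Hamilton cycle $v_{1,1} v_{1,2} \cdots v_{1,t_1} v_{2,1} \cdots v_{k,t_k} v_{1,1}$ concatenating the components, and let $B$ be the $2k$-cycle $v_{1,1} v_{1,t_1} v_{2,1} v_{2,t_2} \cdots v_{k,1} v_{k,t_k} v_{1,1}$ on the junction vertices. Then $A \oplus B = P$, since the bridging edges $v_{i,t_i} v_{i+1,1}$ appear in both $A$ and $B$ (and cancel in symmetric difference), while the remaining edges of each $C_i$ appear in exactly one of $A, B$. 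When $k \leq 1$, $P$ itself is at most a single cycle.

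Applying this lemma to $F_1$ and $F_2$ yields $G = F_1 \oplus F_2 = A_1 \oplus B_1 \oplus A_2 \oplus B_2$, a symmetric difference of at most 4 cycles. The crux will be to show that the orderings of components in $F_1$ and $F_2$ can be chosen so that $B_1 \oplus B_2$ is a single cycle (or empty) in $K_{V(G)}$; then $G = A_1 \oplus A_2 \oplus (B_1 \oplus B_2)$ is a symmetric difference of at most 3 cycles. I expect this to be the main technical obstacle. Since every degree-4 vertex of $G$ appears in both $F_1$ and $F_2$, the junction vertex sets of the two polycycle representations overlap substantially, giving enough flexibility to interlock $B_1$ and $B_2$ into a single cycle by choosing starting points and directions of the cyclic orderings appropriately. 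A case analysis on the numbers of components $k_1, k_2$ of $F_1, F_2$ (noting that $k_1 = 1$ or $k_2 = 1$ already gives the bound directly), together with possible small edge swaps within $F_1 \cup F_2$, should handle all situations, including the disconnected case such as $G = K_5 \sqcup K_5$ where the direct decomposition already works out.

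For the path odd-cover, an analogous argument applies: every polycycle $P$ with $k \geq 1$ components equals the symmetric difference of two paths in $K_{V(P)}$, obtained by deleting a common bridging edge from both cycles $A, B$ above (leaving two paths with the same pair of endpoints). Combining the resulting path representations of $F_1$ and $F_2$, and merging one path from each by concatenation along a shared endpoint vertex, produces three paths whose symmetric difference equals $G$; the endpoint parities align because $G$ is Eulerian, so the net contribution to each vertex degree from path endpoints is even.
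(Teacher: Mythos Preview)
Your approach diverges from the paper's and, as written, has a real gap precisely where you flag the ``main technical obstacle.'' The paper does not try to merge two of four cycles after the fact. Instead, it builds a linear forest decomposition $\{F_1,F_2,F_3\}$ of $G$ from the two polycycles via the Akiyama--Exoo--Harary construction, carefully choosing the transversal matchings so that the endpoint sets $R_{ij}=\mathrm{end}(F_i)\cap\mathrm{end}(F_j)$ have the right parity (odd for paths, even for cycles), and, in the cycle case, so that each forest has a component with one endpoint in each of its two $R_{ij}$'s. Once these parity and connectivity conditions are secured, closing the linear forests into three paths or three cycles is a short extremal argument: add an edge inside an $R_{ij}$ of size~$>2$ (resp.~$>1$) to drop the endpoint count, and show the invariants persist.

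Your polycycle-as-$A\oplus B$ lemma is fine (and is essentially the paper's Lemma on polycycles admitting size-$2$ path and cycle odd-covers), but the reduction to ``make $B_1\oplus B_2$ a single cycle'' is not established. For $B_1\oplus B_2$ to be a cycle you need every common vertex to have exactly one of its two $B_1$-edges coincide with one of its two $B_2$-edges, and you need global connectivity; neither follows from ``substantial overlap'' of junction sets. For instance, with $G=K_5\sqcup K_5$, a careless choice makes $V(B_1)\cap V(B_2)=\emptyset$, and even with overlap, a shared vertex generically has degree~$4$ in $B_1\oplus B_2$. You would need a structural argument (not just ``a case analysis should handle all situations'') to guarantee choices of starting points and directions that force all the required edge cancellations and connectivity simultaneously. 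The paper's proof shows this kind of control is genuinely delicate: the cycle case requires an extra lemma producing a transversal pair with a designated ``pivot'' vertex $u$ to seed the $t_i>0$ condition.

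The path argument is also incomplete. Writing $G$ as a symmetric difference of four paths and ``concatenating along a shared endpoint'' does not in general yield a simple path: even if two paths share an endpoint $v$ (which you must still arrange), their union is only a walk, and their symmetric difference can be disconnected. The paper handles this by controlling all six endpoints of the three linear forests at once through the parity parameter, rather than trying to glue paths pairwise.
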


    The two conclusions in \Cref{thm:oddcover3} are proven in Sections \ref{sec:path odd-covers} and \ref{sec:oddcycle}. We then show, 
    following the framework of~\cite{BBCFRY23}, that Theorem \ref{thm:oddcover3}
    leads to the following improvement on minimum path and cycle odd-covers of general graphs.
    \begin{theorem}\label{thm:odd cover general}
        Every graph $G$ admits a path odd-cover of size at most $\max \left\{\frac{\vodd(G)}{2}, \left \lceil\frac{\vodd(G)/2 + 3\Delta_e(G)}{4} \right \rceil\right\}$.
        Moreover, if $G$ is Eulerian, then $G$ admits a path odd-cover of size at most $\lceil\frac34\Delta(G)\rceil$ and a cycle odd-cover of size at most $\lceil\frac34\Delta(G)\rceil$.
    \end{theorem}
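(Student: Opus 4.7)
My plan is to reduce Theorem \ref{thm:odd cover general} to Theorem \ref{thm:oddcover3} via a structural decomposition. For Eulerian $G$ with $\Delta(G) = 2m$, I would first prove a decomposition lemma: $E(G)$ partitions into $\lceil m/2 \rceil$ edge-disjoint Eulerian subgraphs, each of maximum degree at most $4$, and when $m$ is odd, one of them has maximum degree at most $2$. Such a decomposition can be proved by induction on $m$ (base $m \leq 2$ trivial); at each step, one extracts an Eulerian subgraph $H$ of $G$ of maximum degree $4$ that attains degree $4$ at every vertex of $G$ of maximum degree $2m$, so that $G \setminus H$ remains Eulerian with maximum degree at most $2(m-2)$. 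The existence of such $H$ follows by embedding $G$ into a $2m$-regular Eulerian multigraph, applying Petersen's 2-factor theorem to extract two edge-disjoint 2-factors, and restricting their union back to $G$ while pairing restrictions to cancel any induced odd degrees.

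Given this decomposition, I would apply Theorem \ref{thm:oddcover3} to each Eulerian subgraph of maximum degree $4$ to obtain a path (resp.\ cycle) odd-cover of size at most $3$. For the residual maximum-degree-$2$ subgraph (a disjoint union of cycles in $K_n$) when $m$ is odd, I would construct an odd-cover of size at most $2$ explicitly: traverse all components via a single long path (resp.\ cycle) $Q_1$ using bridging edges through $K_n$, together with a short correcting path (resp.\ cycle) $Q_2$ on the bridge endpoints, so that $Q_1 \oplus Q_2$ equals the disjoint union of cycles. Summing yields $3\lfloor m/2 \rfloor + 2$ when $m$ is odd and $3m/2$ when $m$ is even; both equal $\lceil 3m/2\rceil = \lceil 3\Delta(G)/4\rceil$, which establishes the Eulerian half of the theorem.

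For the non-Eulerian path odd-cover case, I would pair up the $\vodd(G)$ odd-degree vertices of $G$ arbitrarily and add a matching $M$ of $\vodd(G)/2$ edges chosen from $K_n \setminus G$, so that $G \cup M$ is Eulerian with maximum degree at most $\Delta_e(G)$. Applying the Eulerian bound to $G \cup M$ yields a path odd-cover of size at most $\lceil 3\Delta_e(G)/4\rceil$, which I would then convert to a path odd-cover of $G$ by removing each matching edge from a path containing it; such a removal either splits the containing path into two subpaths or merely shortens it if the matching edge was at an endpoint. Choosing $M$ so that matching edges tend to lie at path endpoints, together with a careful accounting of how many splits actually occur, should yield the claimed bound $\max\{\vodd(G)/2, \lceil(\vodd(G)/2 + 3\Delta_e(G))/4\rceil\}$.

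The hardest part will be the decomposition lemma, since the naive restriction of a 2-factor from the multigraph embedding may introduce odd-degree vertices in $G$, so careful pairing of restrictions is needed to preserve Eulerianity of each subgraph. A secondary challenge is achieving the sharp non-Eulerian bound: treating each matching edge as an extra trivial path only yields the weaker $\lceil 3\Delta_e(G)/4\rceil + \vodd(G)/2$, and the sharper bound requires a more nuanced integration of matching edges into the paths produced by the Eulerian cover, exploiting endpoint structure.
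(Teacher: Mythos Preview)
Your Eulerian case is essentially the paper's argument, but you have made the decomposition step harder than necessary. The paper simply applies \Cref{cor:Petersen generalization undirected} to split $G$ into $\Delta(G)/2$ polycycles (each already Eulerian of maximum degree~$2$), then pairs them up; the union of two edge-disjoint polycycles is automatically Eulerian of maximum degree~$4$, so no parity repair is needed. Your proposed route through a $2m$-regular multigraph embedding and ``pairing restrictions to cancel induced odd degrees'' is unnecessary detour; the polycycle decomposition gives your lemma for free. Your treatment of the leftover degree-$2$ piece and the arithmetic $3\lfloor m/2\rfloor + 2 = \lceil 3m/2\rceil$ match the paper exactly (the paper cites this as \Cref{lem:polycycle two paths or cycles}).

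The non-Eulerian path case has a real gap. First, a small issue: a matching on the odd-degree vertices need not exist inside $K_n \setminus G$ (take $G = K_4$ minus an edge: the two odd-degree vertices are adjacent in $G$). The paper fixes this by using $G' := G \oplus M$ rather than $G \cup M$. More seriously, your plan to remove matching edges from the paths of an odd-cover of $G'$ does not work: in an odd-cover an edge of $M$ may lie in several paths (any odd number), so ``remove it from the path containing it'' is ill-defined, and even when it is in a single path the split doubles the path count. You acknowledge this gives only the weak bound $\lceil 3\Delta_e(G)/4\rceil + \vodd(G)/2$, and the proposed fix---choosing $M$ so its edges land at path endpoints---cannot be carried out, since the paths are produced only after $M$ is fixed.

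The paper's actual mechanism is different: rather than removing $M$ at the end, it absorbs $M$ at the start. Using a lemma from \cite{BBCFRY23} (\Cref{lem:technical path prelim} here), one shows that a union of $t$ polycycles together with a matching of size roughly~$2t$ already admits a path odd-cover of size~$2t$---so up to $\Delta_e(G)$ matching edges are covered ``for free,'' and only the excess $\max\{0,\,\vodd(G)/2 - \Delta_e(G)\}$ matching edges are taken as singleton paths. A short case analysis (with the $|M|=2$ case handled separately via \Cref{lem:two paths exceptional case,lem:two good edges}) then yields the stated maximum. This absorption idea, not post-hoc removal, is what you are missing.
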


    Note that there is a trivial lower bound of $\max\{\frac{\vodd(G)}{2},\frac{\Delta(G)}{2}\}$ (resp.~$\frac{\Delta(G)}{2}$) on the minimum size of a path (resp.~cycle) odd-cover of a graph (resp.~Eulerian graph) $G$; see \cite{BBCFRY23}. Hence, for graphs with many vertices with odd degree (more precisely, for graphs $G$ with $\frac{\vodd(G)}{2} \geq \Delta_e(G)$), the minimum size of a path odd-cover of $G$ is equal to the trivial lower bound $\frac{\vodd(G)}{2}$. On the opposite end, for Eulerian graphs $G$ (i.e.,~$\vodd(G)=0$), Theorem \ref{thm:odd cover general} improves the upper bound $\Delta(G)$ of \cite{BBCFRY23} to $\lceil\frac{3\Delta(G)}{4}\rceil$, cutting the gap to the trivial lower bound $\frac{\Delta(G)}{2}$ roughly in half, for both path and cycle odd-covers.

    The lower bound $\frac{\Delta(G)}{2}$ for Eulerian graphs can be improved slightly to $\frac12\Delta(G)+1$ for certain graphs: let $\Delta\geq 2$ be an even integer, let $G$ be a non-Hamiltonian $\Delta$-regular graph (for example, the disjoint union of two copies of $K_{\Delta+1}$; see Figure \ref{fig:pathcoverexample}) and let $n=|V(G)|$.
    Then $G$ is an Eulerian graph with $\Delta(G)=\Delta$, but $G$ admits neither a path odd-cover of size $\frac12\Delta$ nor a cycle odd-cover of size $\frac12\Delta$. 
    Indeed, since every path on at most $n$ vertices has at most $n-1$ edges, the symmetric difference of $\frac12\Delta$ such paths has at most $\frac12\Delta(n-1)$ edges. Since $G$ has $\frac12\Delta n$ edges, it follows that $G$ does not admit a path odd-cover of size $\frac12\Delta$. Moreover, since a cycle on at most $n$ vertices has at most $n$ edges, $G$ admits a cycle odd-cover of size $\frac12\Delta$ if and only if $G$ can be decomposed into $\frac12\Delta$ Hamiltonian cycles, which would contradict our assumption that $G$ is non-Hamiltonian. Hence $G$ does not admit a cycle odd-cover of size $\frac12\Delta$.

    In particular, there are infinitely many Eulerian graphs $G$ with $\Delta(G)=4$ that do not admit a path nor cycle odd-cover of size $\frac12\Delta(G)=2$, and Theorem \ref{thm:oddcover3} is best possible. Compared to our new upper bound $\lceil\frac34\Delta(G)\rceil$, we again suspect that this lower bound is closer to the truth.

    \begin{conjecture}
        There exists a constant $c$ such that every Eulerian graph $G$ admits a path odd-cover of size $\frac12\Delta(G)+c$ and a cycle odd-cover of size at most $\frac12\Delta(G)+c$.
    \end{conjecture}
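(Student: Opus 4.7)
The plan is to leverage the connection between path odd-covers and linear arboricity. Every path odd-cover of $G$ induces a linear forest decomposition of $G$ of the same size, and the authors of \cite{BBCFRY23} proved a partial converse for Eulerian graphs: there is always a path odd-cover of size $\mathrm{la}(G)$ provided one allows the host graph to be $K_{n'}$ for some $n' \geq n$. Combined with the Akiyama--Exoo--Harary conjecture $\mathrm{la}(G) \leq \lceil (\Delta(G)+1)/2 \rceil$, this would yield a path odd-cover of size $\Delta(G)/2 + O(1)$ in some extended host graph, which one would then need to pull back to $K_n$ at the cost of only an additive constant.

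Concretely, the attack proceeds in three steps. First, produce a linear forest decomposition of $G$ of size $\Delta(G)/2 + O(1)$. Second, upgrade each linear forest into a single path in $K_n$ by concatenating its component paths through auxiliary edges from $K_n \setminus G$. Third, pair up the auxiliary edges across different members of the cover so that they cancel modulo~$2$, introducing at most an additive constant number of extra paths to correct any leftover parity. For the cycle odd-cover statement, one additionally closes each resulting path into a cycle with one more auxiliary edge and cancels these in the same fashion; since $G$ is Eulerian the parity constraints on the auxiliary edges are consistent with this.

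The main obstacle is the first step, which is essentially the content of the linear arboricity conjecture and is itself long-standing and open. The best known general bound $\mathrm{la}(G) = \Delta(G)/2 + O(\Delta(G)^{2/3})$ due to Alon yields only a corresponding $\Delta/2 + O(\Delta^{2/3})$ odd-cover bound, not $\Delta/2 + O(1)$. Resolving the conjecture would therefore likely require either new progress on linear arboricity restricted to Eulerian graphs, or a direct construction exploiting Eulerian structure---for instance, an iterated application of the $\Delta = 4$ base case \Cref{thm:oddcover3} in which one removes four units of degree per three covering cycles (matching the current $\lceil 3\Delta/4\rceil$ ratio), refined to remove two units of degree per single cycle in $K_n$ rather than per polycycle, so that each ``step'' of the induction achieves the optimal exchange rate between degree reduction and cover growth.
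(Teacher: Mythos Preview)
The statement you were given is a \emph{conjecture}, not a theorem: the paper poses it as an open problem and offers no proof. There is therefore nothing in the paper to compare your proposal against. You yourself recognize this, since your write-up is not a proof but a discussion of possible lines of attack together with an honest identification of the main obstruction (the linear arboricity conjecture).

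Two remarks on the sketch nonetheless. First, even granting step~1, steps~2 and~3 hide a real difficulty: the number of auxiliary edges needed to concatenate the components of a single linear forest into one path is one less than the number of components of that forest, which in general is linear in~$|V(G)|$, not bounded by any function of~$\Delta(G)$. Your cancellation scheme in step~3 would need to match up these auxiliary edges across \emph{different} linear forests, whose components have no reason to share endpoints, so it is far from clear that the leftover after cancellation can be absorbed by~$O(1)$ additional paths. Second, the partial converse from~\cite{BBCFRY23} that you invoke produces an odd-cover only in an enlarged host~$K_{n'}$, and the conjecture as stated concerns odd-covers in~$K_n$; the ``pull back at the cost of an additive constant'' step is itself unjustified and would need its own argument. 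In short, your outline correctly locates the conjecture relative to linear arboricity, but the reduction you sketch from a linear forest decomposition to a path odd-cover of nearly the same size is not yet a viable plan even modulo the linear arboricity conjecture.
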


    \begin{figure}[ht]
\centering        
\begin{tikzpicture}[vertices/.style={draw, fill=black, circle, inner sep=0pt, minimum size = 4pt, outer sep=0pt}, r_edge/.style={draw=red,line width= 1,>=latex,red}, b_edge/.style={draw=blue,line width= 1,>=latex,blue}, k_edge/.style={draw=black,line width= 1,>=latex,black}, scale=1]
\path[use as bounding box] (-0.5,-2) rectangle (10.5,2.5);

\node[vertices] (c_1) at (0.0,0.6) {};
\node[vertices] (c_2) at (2.0, 2) {};
\node[vertices] (c_3) at (4.0, 0.6) {};
\node[vertices] (c_4) at (3.2, -1.4) {};
\node[vertices] (c_5) at (0.8, -1.4) {};

\node[vertices] (d_1) at (6.0,0.6) {};
\node[vertices] (d_2) at (8.0, 2) {};
\node[vertices] (d_3) at (10.0, 0.6) {};
\node[vertices] (d_4) at (9.2, -1.4) {};
\node[vertices] (d_5) at (6.8, -1.4) {};

\foreach \to/\from in {c_1/c_2, c_2/c_3, c_3/c_4, c_4/c_5, c_5/c_1, c_1/c_3, c_1/c_4, c_2/c_4, c_2/c_5, c_3/c_5,
    d_1/d_2, d_2/d_3, d_3/d_4, d_4/d_5, d_5/d_1, d_1/d_3, d_1/d_4, d_2/d_4, d_2/d_5, d_3/d_5}
\draw[k_edge]  (\to)--(\from);
\end{tikzpicture}

\begin{tikzpicture}[vertices/.style={draw, fill=black, circle, inner sep=0pt, minimum size = 4pt, outer sep=0pt}, r_edge/.style={draw=red, line width= 1,>=latex, dashed}, b_edge/.style={draw=blue,line width= 1,>=latex}, k_edge/.style={draw=black,line width= 2,>=latex,black}, scale=1]
\tikzset{decoration={snake,amplitude=.4mm,segment length=2mm}}
\path[use as bounding box] (-0.5,-2) rectangle (10.5,2.5);

\node[vertices] (c_1) at (0.0,0.6) {};
\node[vertices] (c_2) at (2.0, 2) {};
\node[vertices] (c_3) at (4.0, 0.6) {};
\node[vertices] (c_4) at (3.2, -1.4) {};
\node[vertices] (c_5) at (0.8, -1.4) {};

\node[vertices] (d_1) at (6.0,0.6) {};
\node[vertices] (d_2) at (8.0, 2) {};
\node[vertices] (d_3) at (10.0, 0.6) {};
\node[vertices] (d_4) at (9.2, -1.4) {};
\node[vertices] (d_5) at (6.8, -1.4) {};

\foreach \to/\from in {c_1/c_5,c_4/c_5,c_1/c_2,c_2/c_3,d_1/d_2,d_2/d_3,d_3/d_4,d_4/d_5}
\draw[k_edge]  (\to)--(\from);

\foreach \to/\from in {c_5/c_2,c_2/c_4,c_4/c_1,c_1/c_3,d_1/d_3,d_3/d_5,d_5/d_2,d_2/d_4}
\draw[r_edge]  (\to)--(\from);

\foreach \to/\from in {c_5/c_3,c_3/c_4,d_5/d_1,d_1/d_4}
\draw[b_edge]  (\to)--(\from);

\path [k_edge] (c_3) edge[bend left=15, looseness=1] (d_1);
\path [r_edge] (c_3) edge[bend right=15, looseness=1] (d_1);

\path [k_edge] (c_4) edge[bend right=10, looseness=1] (d_5);
\path [b_edge] (c_4) edge[bend left=10, looseness=1] (d_5);

\path [r_edge] (c_5) edge[bend right=15, looseness=1] (d_4);
\path [b_edge] (c_5) edge[bend right=20, looseness=1] (d_4);

\end{tikzpicture}

\caption{An odd-cover of the disjoint union of two $K_5$'s by $3$ cycles.}
\label{fig:pathcoverexample}
\end{figure}
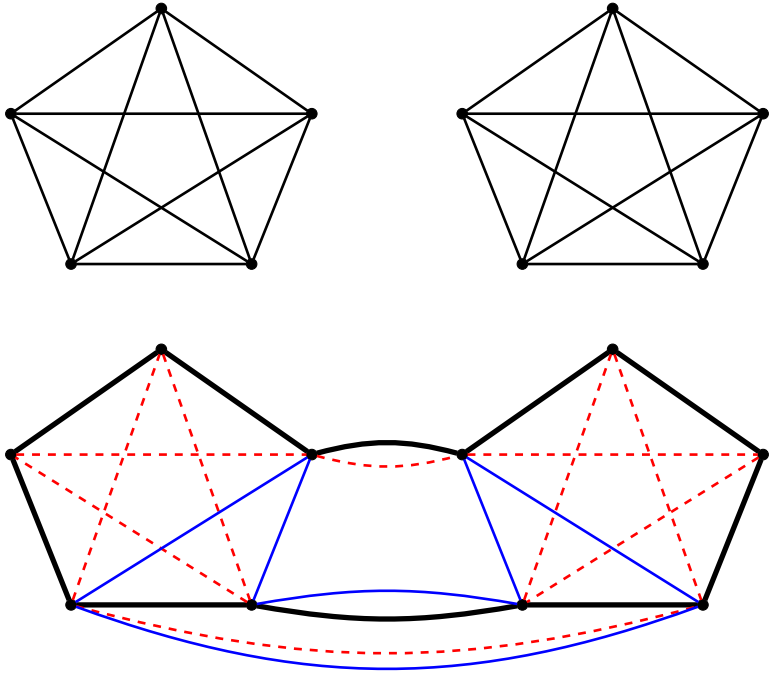
    
\vspace*{0.4cm}
     \noindent
    \textbf{Outline.} We provide some preliminaries for our discussion of the two decomposition problems in Section \ref{sec:prelim}. In Section \ref{sec:pp}, we devise improved upper and lower bounds on the diameters of partition polytopes. Section \ref{sec:odd} is dedicated to improved bounds for path and cycle odd-covers. We conclude with a final remark in Section \ref{sec:conclusion}.

\section{Preliminaries}\label{sec:prelim}
In this section, we collect the elementary observations and external results that we will need for our proofs.
\subsection{Edge walks in the partition polytope}

In \Cref{sec:intropp}, we discussed two equivalent ways of describing edge walks in $\PP(\kappa_1, \ldots, \kappa_k)$. We now discuss these perspectives in more detail and formally prove their equivalence. 

We think of a $p$-cycle decomposition $(\sigma_1, \ldots, \sigma_t)$ of $\mathrm{CDG}(p,p')$ as a type of factorization of $p' = p\sigma_t \cdots \sigma_1$ into irreducible parts, namely $p$-cycles. On the other hand, we think of a resolution $(p_0, \ldots, p_t)$ of $(p,p')$ encoded by $(\tau_1, \ldots, \tau_t)$ as describing a process of transitioning from $p = p_0$ to $p' = p_t$ in $t$ stages, where at the $i$-th stage ($1 \leq i \leq t$), the $p_{i-1}$-cycle $\tau_i$ dictates the objects that are to be interchanged. Of course, the $p$-cycle $\sigma_i$ also encodes the $i$-th cyclic exchange, but in a different way: namely, by dictating the \textit{absolute positions} of the objects that are interchanged (thinking of each object $j \in [m]$ as occupying position $j$ at the beginning of the process). Our study of diameters of partition polytopes will make use of both of these perspectives.
    
\begin{prop}\label{prop:diameter reformulations}
    Let $\kappa_1,\ldots,\kappa_n\geq 0$ be integers summing to $m$, and let $p, p' : [m] \to [n]$ be $(m,n)$-partitions with shape $(\kappa_1, \ldots, \kappa_n)$. Then the following are equivalent for $t \in \mathbb N$.
	\begin{enumerate}[label=(\roman*)]
		\item The incidence matrices $M_p$ and $M_{p'}$ are at distance at most $t$ in the graph of $\PP(\kappa_1, \ldots, \kappa_n)$.
		\item There exists a resolution of $(p,p')$ of length $t$.
		\item There exists a $p$-cycle decomposition of $\mathrm{CDG}(p,p')$ of length $t$.
	\end{enumerate}
\end{prop}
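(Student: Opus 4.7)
My plan is to prove the three implications (i)$\Rightarrow$(ii)$\Rightarrow$(iii)$\Rightarrow$(i), with the middle equivalence being the heart of the argument.

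The equivalence (i)$\Leftrightarrow$(ii) is essentially a bookkeeping matter. An edge walk of length $s \le t$ from $M_p$ to $M_{p'}$ in the graph of $\PP(\kappa_1,\ldots,\kappa_n)$ is a sequence $M_{p_0},\ldots,M_{p_s}$ of distinct vertices where consecutive partitions $p_{i-1}, p_i$ are joined by an edge. By the characterization of edges recalled in the introduction, each such pair satisfies $p_i = p_{i-1}\tau_i$ for some non-trivial $p_{i-1}$-cycle $\tau_i$. Padding this sequence with trivial cycles (the identity is a cycle by convention) produces a resolution of $(p,p')$ of length exactly $t$. Conversely, given a resolution of length $t$, each step $p_i = p_{i-1}\tau_i$ is either stationary (if $\tau_i$ is trivial) or an edge of the polytope, so we obtain an edge walk of length at most $t$ after deleting repeated vertices.

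The implication (ii)$\Rightarrow$(iii) proceeds by conjugation. Given a resolution $(p_0,\ldots,p_t)$ encoded by $(\tau_1,\ldots,\tau_t)$, set $\rho_0 := \mathrm{id}$ and $\rho_i := \tau_i \rho_{i-1} = \tau_i\tau_{i-1}\cdots\tau_1$, so that $p_i = p\rho_i$. Define $\sigma_i := \rho_{i-1}\tau_i\rho_{i-1}^{-1}$. Since conjugation preserves cycle structure, $\sigma_i$ is a cycle of the same length as $\tau_i$, and explicitly if $\tau_i = (x_1\:\cdots\:x_k)$ then $\sigma_i = (\rho_{i-1}(x_1)\:\cdots\:\rho_{i-1}(x_k))$. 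Injectivity of $p$ on $\supp(\sigma_i)$ follows from $p(\rho_{i-1}(x_j)) = p_{i-1}(x_j)$ together with the fact that $\tau_i$ is a $p_{i-1}$-cycle, so each $\sigma_i$ is a $p$-cycle. A short telescoping calculation shows $\sigma_i\sigma_{i-1}\cdots\sigma_1 = \rho_i$ by induction on $i$: the base case is $\sigma_1 = \tau_1 = \rho_1$, and inductively
\[
\sigma_i \cdot (\sigma_{i-1}\cdots\sigma_1) = (\rho_{i-1}\tau_i\rho_{i-1}^{-1})\rho_{i-1} = \rho_{i-1}\tau_i\cdot \text{--- wait, order matters}.
\]
Here I need to be careful about whether we write $\rho_i = \tau_i\rho_{i-1}$ or $\rho_{i-1}\tau_i$. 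Taking $p_i = p_{i-1}\tau_i$ and composing with $p$ acting from the left of its argument, the natural order is $\rho_i = \rho_{i-1}\tau_i$ so that $p\rho_i = p\rho_{i-1}\tau_i = p_{i-1}\tau_i = p_i$. With this convention, re-set $\sigma_i := \rho_i\tau_i^{-1}\rho_i^{-1}\cdot\rho_i = $ etc.; more cleanly, define $\sigma_i := \rho_i\tau_i\rho_i^{-1}$ -- no, the cleanest choice is whatever makes $\sigma_t\cdots\sigma_1 = \rho_t = \tau_1\cdots\tau_t$, and one verifies by induction that this is achieved by $\sigma_i := (\tau_1\cdots\tau_{i-1})\tau_i(\tau_1\cdots\tau_{i-1})^{-1}$. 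Then $p' = p_t = p\rho_t = p\sigma_t\cdots\sigma_1$, as required.

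For (iii)$\Rightarrow$(ii), I reverse the construction: given a $p$-cycle decomposition $(\sigma_1,\ldots,\sigma_t)$, define $p_i := p\sigma_i\cdots\sigma_1$ and $\tau_i := (\sigma_{i-1}\cdots\sigma_1)^{-1}\sigma_i(\sigma_{i-1}\cdots\sigma_1)$. Then $p_i = p_{i-1}\tau_i$ by direct substitution, and $\tau_i$ is a cycle with the same cycle structure as $\sigma_i$; injectivity of $p_{i-1}$ on $\supp(\tau_i)$ follows from $p_{i-1}(x) = p(\sigma_{i-1}\cdots\sigma_1(x))$ combined with $\sigma_i$ being a $p$-cycle. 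Thus $(p_0,\ldots,p_t)$ is a resolution of $(p,p')$ of length $t$.

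The main obstacle is purely notational: getting the order of composition right so that the conjugation formulas for $\sigma_i$ and $\tau_i$ truly are inverses of each other, and so that the telescoping product recovers the desired partition. Once the convention $p_i = p_{i-1}\tau_i$ is fixed, everything else follows mechanically by induction on $i$.
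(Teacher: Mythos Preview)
Your proposal is correct and follows essentially the same approach as the paper: the equivalence (i)$\Leftrightarrow$(ii) via the edge characterization, and (ii)$\Leftrightarrow$(iii) via the mutually inverse conjugation formulas $\sigma_i = (\tau_1\cdots\tau_{i-1})\tau_i(\tau_1\cdots\tau_{i-1})^{-1}$ and $\tau_i = (\sigma_{i-1}\cdots\sigma_1)^{-1}\sigma_i(\sigma_{i-1}\cdots\sigma_1)$, which are exactly the paper's formulas. The paper isolates the key fact ``$\sigma$ is a $p$-cycle iff $\pi^{-1}\sigma\pi$ is a $p\pi$-cycle'' as a standalone claim, whereas you argue it inline; once you settle on the convention $\rho_i = \tau_1\cdots\tau_i$ and delete the visible false starts, your argument and the paper's are identical.
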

\begin{proof}
    The equivalence of $(i)$ and $(ii)$ follows from the characterization of the edges of $\PP(\kappa_1,\dots,\kappa_n)$. To see the equivalence of $(ii)$ and $(iii)$, we first observe the following.
    \begin{claim*}
        For any $\sigma, \pi \in S_m$, $\sigma$ is a $p$-cycle if and only if $\tau := \pi^{-1}\sigma\pi$ is a $p\pi$-cycle.
    \end{claim*}
    \begin{subproof}
        Since conjugation preserves cycle structure, we have that $\sigma$ is a cycle if and only if $\tau$ is a cycle. It remains to show that $\sigma$ is $p$-balanced if and only if $\tau$ is $p\pi$-balanced. Note that for $j\in[m]$, we have $j\in \supp(\tau)=\supp(\pi^{-1}\sigma\pi)$ if and only if $\pi(j) \in \supp(\sigma)$. This gives
    \begin{align*}
        \supp(\sigma) &= \pi(\supp(\tau)).
    \end{align*}
    It follows that $p$ is injective on $\supp(\sigma)$ if and only if $p\pi$ is injective on $\supp(\tau)$, as desired.
    \end{subproof}

    Now consider a resolution $(p_0, \ldots, p_t)$ of $(p,p')$ encoded by $(\tau_1, \ldots, \tau_t)$, and for each $1 \leq i \leq t$, define $\sigma_i := (\tau_1 \cdots \tau_{i-1})\tau_i(\tau_1 \cdots \tau_{i-1})^{-1}$. It is easy to see that for each $i \in [t]$ that $\tau_1 \cdots \tau_i = \sigma_i \cdots \sigma_1$. In particular, we have 
    \[p' = p_t = p\tau_1 \cdots \tau_t = p\sigma_t \cdots \sigma_1.\]
    Also, by the definition of resolution, we have for each $i \in [t]$ that $\tau_i$ is a $p_{i-1}$-cycle, where $p_{i-1} = p\tau_1 \cdots \tau_{i-1}$, so by the claim, $\sigma_i$ is a $p$-cycle. Thus $(\sigma_1, \ldots, \sigma_t)$ is a $p$-cycle decomposition of $\mathrm{CDG}(p,p')$. Thus $(ii)$ implies $(iii)$.

    Conversely, consider a $p$-cycle decomposition $(\sigma_1, \ldots, \sigma_t)$ of $\mathrm{CDG}(p,p')$, and for each $1 \leq i \leq t$, define $\tau_i := (\sigma_{i-1} \cdots \sigma_1)^{-1}\sigma_i(\sigma_{i-1} \cdots \sigma_1)$. Again, for each $i \in [t]$, we have that $\tau_1 \cdots \tau_i = \sigma_i \cdots \sigma_1$. Therefore, if we define
    \[p_i := p\sigma_i \cdots \sigma_1 = p\tau_1 \cdots \tau_i,\]
    for $0 \leq i \leq t$, we have that $p_i = p_{i-1}\tau_i$ for $1 \leq i \leq t$, and also $p' = p_t$. Moreover, for each $i \in [t]$, since $\sigma_i$ is a $p$-cycle, we have by the claim that $\tau_i$ is a $p(\sigma_{i-1} \cdots \sigma_1)$-cycle, hence a $p_{i-1}$-cycle, so $(p_0, \ldots, p_t)$ is a resolution of $(p,p')$. Thus $(iii)$ implies $(ii)$, and the proof is complete.
\end{proof}

\subsection{Decompositions into polycycles} \label{sec:polycycles}
A classical theorem of Petersen~\cite{Pet91} states that for every even positive integer $d$, any $d$-regular graph admits a decomposition into $2$-factors (i.e., spanning $2$-regular subgraphs). We will make use of the following generalization, proved in~\cite{borgwardt2013diameter} using a 
network flow argument and also in~\cite{BBCFRY23} using a simpler matching argument.

\begin{lemma}[\cite{borgwardt2013diameter, BBCFRY23}]\label{lem:Petersen generalization directed}
	Let~$G$ be an Eulerian directed multigraph with maximum out-degree~$\Delta$. Then~$G$ can be decomposed into~$\Delta$ edge-disjoint directed polycycles~$H_1, \ldots, H_\Delta$. 
    
    Moreover, for any integer~$0 \leq t \leq \Delta$, if at most one vertex in~$G$ has out-degree greater than~$t$, then~$H_{t+1}, \ldots, H_\Delta$ can be chosen to be directed cycles themselves. 
\end{lemma}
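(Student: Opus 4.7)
The plan is to establish the first (decomposition) statement via the standard König edge-coloring reduction, and then derive the \emph{moreover} clause by greedily peeling off single directed cycles through the distinguished high out-degree vertex before invoking the first part on the residual digraph.

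For the first part, I would augment $G$ to a digraph $G^*$ by attaching, at each vertex $v$, exactly $\Delta - \deg^+(v)$ self-loops; since $G$ is Eulerian, this preserves the equality $\deg^+ = \deg^-$, and now $\deg^+_{G^*}(v) = \deg^-_{G^*}(v) = \Delta$ for every $v$. Next, form the bipartite multigraph $B^*$ on bipartition $V^+ \sqcup V^-$, with $V^\pm := \{v^\pm : v \in V(G)\}$, by inserting one edge $u^+v^-$ for each arc $(u,v)$ of $G^*$. Then $B^*$ is $\Delta$-regular bipartite, so by König's edge-coloring theorem it decomposes into $\Delta$ perfect matchings $M_1^*, \ldots, M_\Delta^*$. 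Each $M_i^*$ corresponds to a $1$-factor of $G^*$ (every vertex has in- and out-degree exactly~$1$), which is necessarily a disjoint union of directed cycles, possibly including loops. Deleting the self-loops leaves a directed polycycle $H_i$ in $G$, and the $H_i$ together partition the arc set of $G$.

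For the \emph{moreover} clause, assume $\Delta > t$ (else the statement is vacuous) and note that under the hypothesis the unique vertex $v^*$ with out-degree exceeding $t$ must satisfy $\deg^+(v^*) = \Delta$, since the maximum out-degree is attained somewhere and that vertex then has out-degree greater than $t$. The plan is to extract $\Delta - t$ directed cycles $C_{t+1}, \ldots, C_\Delta$ through $v^*$ one at a time, and then apply the first part to the residual. Formally, set $G_0 := G$, and inductively, for $0 \leq i < \Delta - t$, produce a directed cycle $C_{i+1}$ through $v^*$ in $G_i$ as follows. Pick any out-edge $(v^*, w)$ at $v^*$ in $G_i$, which exists since $\deg^+_{G_i}(v^*) = \Delta - i > t \geq 0$. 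The weakly connected component of $v^*$ in $G_i$ is Eulerian, hence strongly connected, so it contains a directed $w \to v^*$-path; a shortest such path is internally disjoint from $v^*$, so prepending $(v^*, w)$ produces a directed cycle through $v^*$. Let $G_{i+1} := G_i \setminus C_{i+1}$, which remains Eulerian while $\deg^+(v^*)$ drops by exactly one and out-degrees elsewhere can only decrease. After $\Delta - t$ iterations the residual $R := G_{\Delta - t}$ satisfies $\deg^+_R(v^*) = t$ and $\deg^+_R(u) \leq \deg^+_G(u) \leq t$ for $u \neq v^*$, so the first part applied to $R$ yields $t$ polycycles $H_1, \ldots, H_t$; setting $H_{t+j} := C_{t+j}$ for $1 \leq j \leq \Delta - t$ completes the decomposition with $H_{t+1}, \ldots, H_\Delta$ being single directed cycles.

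The main obstacle is producing a genuine directed cycle, rather than merely a closed trail, through $v^*$ at each stage. This reduces to the standard fact that a weakly connected Eulerian digraph is strongly connected, which guarantees a shortest directed $w \to v^*$-path within the component of $v^*$ and ensures that path is simple. The remaining bookkeeping (Eulerianness and the $\leq t$ out-degree bound outside $v^*$ are preserved under subgraph passage) is immediate.
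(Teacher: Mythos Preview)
Your proposal is correct and mirrors the paper's approach. The paper cites the first statement to \cite{borgwardt2013diameter,BBCFRY23} (the latter reference gives precisely the bipartite matching/K\"onig argument you outline), and its Remark derives the \emph{moreover} clause by taking a cycle decomposition of $G$, selecting $\Delta - t$ cycles through the distinguished vertex, and applying the first part to what remains---the same peel-and-recurse idea you carry out, only phrased via a global cycle decomposition rather than your iterative strong-connectivity extraction.
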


\noindent
\textbf{Remark.} The second statement of \Cref{lem:Petersen generalization directed} does not appear explicitly in~\cite{borgwardt2013diameter,BBCFRY23}, but it follows from the first statement. Indeed, let $v \in V(G)$ be such that every vertex $u \neq v$ has out-degree at most $t$. If $\{C_1, \ldots, C_s\}$ is a cycle decomposition of $G$, with $v \in V(C_1), \ldots, V(C_{\Delta-t})$, then $G' := G \setminus \bigcup_{i=1}^{\Delta-t}E(C_i)$ is Eulerian with maximum out-degree $t$, which decomposes into polycycles $H_1, \ldots, H_t$ by the first statement. Now setting $H_{t+i} := C_i$ for $1 \leq i \leq \Delta - t$ establishes the second statement.
\\

\Cref{lem:Petersen generalization directed} will serve as a starting point for the proofs of \Cref{thm:improved upper bound} and \Cref{thm:odd cover general}. The following two corollaries specialize the lemma to each of our two group-theoretic settings. The first (\Cref{cor:decomposition into balanced permutations}) is merely a restatement of \Cref{lem:Petersen generalization directed} in the language of $p$-balanced permutations. The second (\Cref{cor:Petersen generalization undirected}) is a version of \Cref{lem:Petersen generalization directed} for Eulerian undirected graphs.

\begin{cor}\label{cor:decomposition into balanced permutations}
	Let~$\kappa_1 \geq \cdots \geq \kappa_n \geq 0$ be integers summing to~$m$, and let~$p,p' : [m] \to [n]$ be~$(m,n)$-partitions with shape~$(\kappa_1, \ldots, \kappa_n)$.
    Then there exist~$p$-cycles~$\sigma_1, \ldots, \sigma_{\kappa_1 - \kappa_2} \in S_m$ and~$p$-balanced permutations~$\pi_1, \ldots, \pi_{\kappa_2} \in S_m$, all with disjoint supports, such that~$p' = p\sigma_{\kappa_1 - \kappa_2} \cdots \sigma_1 \pi_{\kappa_2} \cdots \pi_1$.
\end{cor}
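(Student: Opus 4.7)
The plan is to apply the moreover clause of \Cref{lem:Petersen generalization directed} to the clustering difference graph $G := \CDG(p,p')$ and translate the resulting decomposition back into permutation language. Since $p$ and $p'$ share the shape $(\kappa_1,\dots,\kappa_n)$, each vertex $i \in [n]$ of $G$ has in-degree and out-degree both equal to $\kappa_i$, so $G$ is Eulerian with maximum out-degree $\kappa_1$. The only vertex whose out-degree can strictly exceed $\kappa_2$ is vertex $1$ (and if $\kappa_1 = \kappa_2$ there is no such vertex at all), so the hypothesis of the moreover statement is satisfied with $t := \kappa_2$. Thus \Cref{lem:Petersen generalization directed} yields an edge-disjoint decomposition of $G$ into polycycles $H_1, \dots, H_{\kappa_2}$ together with directed cycles $C_1, \dots, C_{\kappa_1 - \kappa_2}$.

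Next, I would convert each polycycle $H$ in this decomposition into a permutation $\rho \in S_m$ by declaring, for every item $j$ labeling an edge that lies in a non-loop cycle of $H$, that $\rho(j)$ is the item labeling the next edge around that cycle, and setting $\rho(j) = j$ on all remaining items. Because every vertex of a polycycle has out-degree at most $1$, the tails $p(j)$ for $j \in \supp(\rho)$ are pairwise distinct, so $\rho$ is $p$-balanced; if $H$ is a single directed cycle, then $\rho$ is a $p$-cycle. Applying this construction to each $H_i$ produces a $p$-balanced permutation $\pi_i$, and applying it to each $C_j$ produces a $p$-cycle $\sigma_j$. Edge-disjointness of the $H_i$'s and $C_j$'s in $G$ translates to pairwise disjointness of the supports, since $\supp(\rho)$ is contained in the edge set of the corresponding polycycle.

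To conclude, I would verify the identity $p' = p\sigma_{\kappa_1-\kappa_2}\cdots\sigma_1\pi_{\kappa_2}\cdots\pi_1$ pointwise on $[m]$. Disjointness of the supports guarantees that each $j \in [m]$ is moved by at most one permutation in the list. If some $\rho$ in the list satisfies $j \in \supp(\rho)$, then by the next-edge construction $p(\rho(j))$ is the tail of the edge immediately after $j$ in the corresponding polycycle, which coincides with the head $p'(j)$ of edge $j$ in $G$; otherwise the edge labeled $j$ must be a loop of $G$, in which case $p(j) = p'(j)$ directly. Either way the two sides agree at $j$. No step here poses a serious obstacle: this corollary is essentially a translation of \Cref{lem:Petersen generalization directed} into permutation language, and the main thing to watch is that loops in $\CDG(p,p')$ are handled correctly when passing between edges of $G$ and supports of permutations.
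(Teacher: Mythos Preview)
Your proposal is correct and follows essentially the same approach as the paper: apply \Cref{lem:Petersen generalization directed} with $t=\kappa_2$ to $\CDG(p,p')$, then translate each resulting polycycle into a $p$-balanced permutation via the ``next edge'' map, noting that the last $\kappa_1-\kappa_2$ of these are $p$-cycles. Your treatment of loops and your pointwise verification of $p'=p\sigma_{\kappa_1-\kappa_2}\cdots\sigma_1\pi_{\kappa_2}\cdots\pi_1$ are slightly more explicit than the paper's, but the argument is the same.
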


\begin{proof}
    Let~$G = \mathrm{CDG}(p, p')$, which is an Eulerian digraph with maximum out-degree~$\kappa_1$ and has at most one vertex of out-degree greater than~$\kappa_2$. Let~$H_1, \ldots, H_{\kappa_1}$ be polycycles as in \Cref{lem:Petersen generalization directed} with~$\Delta := \kappa_1$ and~$t := \kappa_2$. For each~$1 \leq i \leq \kappa_1$, since every vertex in~$V(H_i)$ has in-degree and out-degree exactly~$1$, we have the following property: for each edge~$e \in E(H_i)$, there exists a unique edge~$\pi_i(e) \in E(H_i)$ such that~$p(\pi_i(e)) = p'(e)$.
    Now defining~$\pi_i(e) = e$ for every~$e \in [m] \setminus E(H_i)$, we have that~$\pi_i$ is a~$p$-balanced permutation of~$[m]$. Since~$H_1, \ldots, H_{\kappa_1}$ decompose the edges of~$G$, the permutations~$\pi_1, \ldots, \pi_{\kappa_1}$ have pairwise disjoint supports, and~$p' = p\pi_{\kappa_1} \cdots \pi_{1}$. Finally, for~$1 \leq i \leq \kappa_1 - \kappa_2$, since~$H_{\kappa_2+i}$ is a directed cycle, it follows that~$\pi_{\kappa_2+1}$ is a~$p$-cycle, so we may define~$\sigma_i := \pi_{\kappa_2 + i}$.
\end{proof}

\begin{cor}\label{cor:Petersen generalization undirected}
	Let~$G$ be an Eulerian graph with maximum degree~$\Delta$. Then~$G$ can be decomposed into~$\Delta/2$ edge-disjoint polycycles~$H_1, \ldots, H_{\Delta/2}$. 

    Moreover, for any integer~$0 \leq t \leq \Delta/2$, if at most one vertex in~$G$ has degree greater than~$2t$, then~$H_{t+1}, \ldots, H_{\Delta/2}$ can be chosen to be directed cycles themselves.
\end{cor}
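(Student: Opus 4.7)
\medskip
\noindent\textbf{Proof plan.} The plan is to reduce to the directed version, \Cref{lem:Petersen generalization directed}, by way of an Eulerian orientation. Since~$G$ is Eulerian, it admits an orientation~$\vec G$ in which~$\deg^+_{\vec G}(v) = \deg^-_{\vec G}(v) = \deg_G(v)/2$ for every~$v \in V(G)$; this is obtained by orienting the edges of each component of~$G$ consistently with an Eulerian circuit. The resulting digraph~$\vec G$ is Eulerian with maximum out-degree~$\Delta/2$, so \Cref{lem:Petersen generalization directed} produces a decomposition of~$\vec G$ into~$\Delta/2$ edge-disjoint directed polycycles~$\vec H_1, \ldots, \vec H_{\Delta/2}$. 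Forgetting orientations yields edge-disjoint undirected subgraphs~$H_1, \ldots, H_{\Delta/2}$ whose union is~$G$, and in each~$H_i$ every vertex has degree exactly~$2$, so each~$H_i$ is a disjoint union of cycles.

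The only subtle point in this step is verifying that each~$H_i$ is actually a polycycle in the undirected sense (a disjoint union of cycles of length at least~$3$), rather than a multigraph containing~$2$-cycles. Because~$G$ is a simple graph, its orientation~$\vec G$ contains at most one arc between any pair of vertices, so no directed cycle in any~$\vec H_i$ can have length~$2$, and loops are similarly ruled out. Hence every cycle in~$\vec H_i$ has length at least~$3$, and the undirected~$H_i$ is a bona fide polycycle.

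For the moreover statement, suppose at most one vertex~$v$ of~$G$ has degree greater than~$2t$. Then in~$\vec G$ the vertex~$v$ is the unique candidate vertex with~$\deg^+_{\vec G}(v) > t$, so the hypotheses of the second part of \Cref{lem:Petersen generalization directed} are satisfied with parameter~$t$. Applying that refinement, we may choose~$\vec H_{t+1}, \ldots, \vec H_{\Delta/2}$ to be directed cycles; their underlying undirected graphs~$H_{t+1}, \ldots, H_{\Delta/2}$ are then single cycles, as required.

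I do not anticipate a serious obstacle: the main idea, orienting~$G$ Eulerianly and pulling back the directed statement, is routine. The only place where care is needed is the simple-graph check in the second paragraph, to make sure that no~$2$-cycles or loops arise when we forget orientations and that each~$H_i$ really is a polycycle (and each single cycle~$H_i$ in the moreover part really is a cycle in the simple-graph sense).
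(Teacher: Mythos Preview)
Your approach is correct and is exactly the paper's: take an Eulerian orientation of~$G$, apply \Cref{lem:Petersen generalization directed} to the resulting digraph with maximum out-degree~$\Delta/2$, and forget orientations. Your additional check that simplicity of~$G$ rules out loops and~$2$-cycles in the~$\vec H_i$ is a valid (and worthwhile) point that the paper's terse proof leaves implicit.
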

\begin{proof}
    Since~$G$ is Eulerian,~$G$ admits an Eulerian orientation~$\vec G$ with maximum out-degree~$\Delta/2$. Now applying \Cref{lem:Petersen generalization directed} to~$\vec G$ and removing orientations gives the desired result.
\end{proof}

\subsection{Decomposing polycycles}
We recall the well-known fact that every permutation~$\pi \in S_m$ is a product of two cycles. In fact, if~$\pi$ is~$p$-balanced for some~$p : [m] \to [n]$, then the two cycles can be chosen to be~$p$-cycles. This means that every directed polycycle~$G$ admits a $p$-cycle decomposition of length at most $2$.

\begin{lemma}\label{lem:one permutation two cycles}
	Let~$p : [m] \to [n]$. Then every~$p$-balanced permutation~$\pi \in S_m$ is a product of two~$p$-cycles.
\end{lemma}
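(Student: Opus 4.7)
The plan is to construct explicit $p$-cycles $\sigma, \tau \in S_m$ with $\pi = \tau\sigma$. First I would write $\pi$ as a product of its nontrivial disjoint cycles $C_1, \ldots, C_k$, where $C_\ell = (x_1^\ell \: \cdots \: x_{t_\ell}^\ell)$. If $k = 0$, then $\pi$ is the identity and is itself the product of two trivial $p$-cycles by the paper's convention, so I would assume $k \geq 1$.

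Next I would define $\sigma$ as the single cycle obtained by concatenating $C_1, \ldots, C_k$ in order,
\[\sigma := (x_1^1 \: x_2^1 \: \cdots \: x_{t_1}^1 \: x_1^2 \: \cdots \: x_{t_2}^2 \: \cdots \: x_1^k \: \cdots \: x_{t_k}^k),\]
and $\tau$ as the cycle that permutes the first elements of the $C_\ell$ in reverse order,
\[\tau := (x_1^1 \: x_1^k \: x_1^{k-1} \: \cdots \: x_1^2)\]
(with $\tau$ trivial if $k = 1$). This is essentially the construction already illustrated in \Cref{fig:lemma 3.2}.

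A short case analysis then verifies $\pi = \tau\sigma$. For $x_i^\ell$ with $i < t_\ell$, $\sigma$ sends $x_i^\ell \mapsto x_{i+1}^\ell$, which lies outside $\supp(\tau) = \{x_1^1, \ldots, x_1^k\}$ since $i+1 \geq 2$, so $\tau\sigma(x_i^\ell) = x_{i+1}^\ell = \pi(x_i^\ell)$. For $x_{t_\ell}^\ell$ with $\ell < k$, $\sigma$ sends it to $x_1^{\ell+1}$ and $\tau(x_1^{\ell+1}) = x_1^\ell = \pi(x_{t_\ell}^\ell)$. Finally, $\sigma(x_{t_k}^k) = x_1^1$ and $\tau(x_1^1) = x_1^k = \pi(x_{t_k}^k)$. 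Outside $\supp(\pi)$, both $\sigma$ and $\tau$ fix everything.

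For the balance condition, I note that $\supp(\sigma) = \supp(\pi)$ and $\supp(\tau) \subseteq \supp(\pi)$, so $p$-injectivity on $\supp(\pi)$ (the hypothesis on $\pi$) immediately transfers to make $\sigma$ and $\tau$ both $p$-cycles. There is no real obstacle in this argument; the only mild subtlety is getting the direction of $\tau$ right so that each $C_\ell$ closes off after $\sigma$ slides one step past $x_{t_\ell}^\ell$, which is pure bookkeeping.
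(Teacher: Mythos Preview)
Your proof is correct and essentially identical to the paper's: you define the same concatenated cycle $\sigma$ (the paper's $\sigma_1$) and the same correcting cycle $\tau$ (the paper's $\sigma_2$, just written starting at $x_1^1$ rather than $x_1^t$), verify $\pi = \tau\sigma$ by the same case split, and deduce the $p$-cycle property from $\supp(\sigma),\supp(\tau)\subseteq\supp(\pi)$ in the same way. The only cosmetic difference is that you explicitly dispatch the $k=0$ case.
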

\begin{proof}
    Let $\pi\in S_m$ be a $p$-balanced permutation and write~$\pi$ as a product of disjoint non-trivial cycles~$C_1 \cdots C_t$. For each~$i \in [t]$, write~$C_i = \left(x^i_1 \: x^i_2 \: \cdots \: x^i_{c_i}\right)$, where~$c_i = |\supp(C_i)|$, so that
    \begin{align*}
        \pi = (x_1^1\cdots x_{c_1}^1)(x_1^2\cdots x_{c_2}^2)\cdots(x_1^t\cdots x_{c_t}^t).
    \end{align*}
    We claim that $\pi=\sigma_2\sigma_1$, where
    \begin{align*}
        \sigma_1 &:= \left(x^1_1 \: \cdots \: x^1_{c_1} \: x^2_1 \: \cdots \: x^2_{c_2} \: \cdots \: x^t_1 \: \cdots \: x^t_{c_t}\right) \text{ and} \\
        \sigma_2 &:= \left(x^t_1 \: x^{t-1}_1 \: \cdots \: x^2_1 \: x^1_1\right).
    \end{align*}
    Indeed, for $i\in[t]$ and $j\in[c_i-1]$, we have $\sigma_2\sigma_1(x_j^i) = \sigma_2(x_{j+1}^i) = x_{j+1}^i=\pi(x_j^i)$, and for $i\in[t]$, we have $\sigma_2\sigma_1(x_{c_i}^i) = \sigma_2(x_1^{i+1})=x_1^i = \pi(x_{c_i}^i)$, where the index $i$ is modulo $t$.
    See Figure \ref{fig:lemma 3.2} for a visual representation of the $p$-cycle decomposition. Moreover, since~$\pi$ is~$p$-balanced, we have that~$p$ is injective on~$\supp(\pi) = \bigcup_{i \in [t]} \supp(C_i)$, so $\sigma_1$ and $\sigma_2$ are~$p$-cycles.
\end{proof}

In the odd-cover setting, we make use of an analogous observation for (undirected) polycycles.

\begin{lemma}\label{lem:polycycle two paths or cycles}
    Every polycycle $G$ admits a path odd-cover of size at most $2$ and a cycle odd-cover of size at most $2$.
\end{lemma}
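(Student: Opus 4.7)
My plan is to mimic the proof of \Cref{lem:one permutation two cycles}, which concatenates the disjoint cycles of a permutation into one ``big'' cycle and then uses a second cycle to correct the resulting errors. Let $G$ be a polycycle with components $C_1, \ldots, C_t$, and write $C_i = x_1^i x_2^i \cdots x_{c_i}^i x_1^i$ with $c_i \geq 3$ (since cycles in simple graphs have length at least~$3$).

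For the cycle odd-cover, if $t = 1$ then the singleton $\{G\}$ is itself a cycle odd-cover, so assume $t \geq 2$. I will define $H_1$ to be the cycle
\[
x_1^1 x_2^1 \cdots x_{c_1}^1\, x_1^2 x_2^2 \cdots x_{c_2}^2\, \cdots\, x_1^t x_2^t \cdots x_{c_t}^t\, x_1^1
\]
on $V(G)$ in the complete graph. Its edges are the ``forward'' edges $x_j^i x_{j+1}^i$ (all in $G$) together with the $t$ ``jump'' edges $x_{c_i}^i x_1^{i+1}$ (indices mod $t$), none of which lie in $G$ since their endpoints belong to distinct components. Hence $H_2 := G \oplus H_1$ consists of the $t$ closing edges $x_{c_i}^i x_1^i$ from $G \setminus H_1$ together with the $t$ jump edges from $H_1 \setminus G$. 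Each vertex $x_1^i$ and each $x_{c_i}^i$ has degree exactly $2$ in $H_2$ while every other vertex has degree $0$, and tracing through the edges reveals $H_2$ to be the single cycle $x_1^1 x_{c_1}^1 x_1^2 x_{c_2}^2 \cdots x_1^t x_{c_t}^t x_1^1$ of length $2t \geq 4$. Thus $\{H_1, H_2\}$ is a cycle odd-cover of $G$ of size at most $2$.

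For the path odd-cover, I will replace $H_1$ by the Hamilton path $P_1$ obtained by omitting the final closing edge $x_{c_t}^t x_1^1$, namely
\[
P_1 := x_1^1 x_2^1 \cdots x_{c_1}^1\, x_1^2 \cdots x_{c_2}^2\, \cdots\, x_1^t \cdots x_{c_t}^t.
\]
The analogous edge comparison shows that $P_2 := G \oplus P_1$ consists of the $t$ closing edges together with the $t-1$ jump edges $x_{c_i}^i x_1^{i+1}$ for $i \in [t-1]$, and these form the single path $x_1^1 x_{c_1}^1 x_1^2 x_{c_2}^2 \cdots x_1^t x_{c_t}^t$. Thus $\{P_1, P_2\}$ is a path odd-cover of $G$ of size at most $2$.

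The only nontrivial step in either construction is verifying that the symmetric difference really is a \emph{single} cycle (resp.~path) rather than a disjoint union of shorter ones; this is a routine degree computation at each vertex together with the explicit traversal described above, so I do not anticipate a serious obstacle.
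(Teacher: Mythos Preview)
Your construction is correct and is essentially the same as the paper's: with $u_i := x_1^i$ and $v_i := x_{c_i}^i$, your $P_1, P_2$ coincide exactly with the paper's two paths, and your $H_1, H_2$ are the paper's $P_1 \cup \{v_tu_1\}$ and $P_2 \cup \{v_tu_1\}$. The only cosmetic difference is that you describe the second path/cycle as $G \oplus H_1$ (resp.\ $G \oplus P_1$) and then verify its shape, whereas the paper writes it down explicitly from the start.
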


\begin{proof}
    We may assume that $G$ is nonempty. Let $C_1, \ldots, C_t$ be the components of $G$. For each $i\in[t]$, select an edge $u_iv_i \in E(C_i)$. Define 
    \begin{align*}
        P_1 &:= \left(\bigcup_{i=1}^t C_i \setminus \{u_iv_i\}\right) \cup \bigcup_{i=1}^{t-1} \{v_iu_{i+1}\}; \\
        P_2 &= \left(\bigcup_{i=1}^t \{u_iv_i\}\right) \cup \bigcup_{i=1}^{t-1} \{v_iu_{i+1}\}.
    \end{align*}
    Then $\{P_1, P_2\}$ is a path odd-cover of $G$. 
    Moreover, if $t=1$, then $\{C_1\}$ is a cycle odd-cover of $G$, and if $t \geq 2$, then $\{P_1 \cup \{v_tu_1\}, P_2 \cup \{v_tu_1\}\}$ is a cycle odd-cover of $G$. See Figure \ref{fig:lemma 2.5}.
\end{proof}

\begin{figure}[h!]
     \centering
     \begin{subfigure}[b]{\textwidth}
         \centering
\subcaptionbox{A polycycle. \label{fig:pi}}{
\begin{tikzpicture}[vertices/.style={draw, fill=black, circle, inner sep=0pt, minimum size = 4pt, outer sep=0pt}, r_edge/.style={draw=red,line width= 1,>=latex,red}, b_edge/.style={draw=blue,line width= 1,>=latex,blue}, k_edge/.style={draw=black,line width= 1,>=latex,black}, scale=1]
\path[use as bounding box] (-0.5,-0.5) rectangle (11,3.5);
            
            \node[vertices] (x1^1) at (0,2) {};
            \node[vertices] (x2^1) at (0,0) {};
            \node[vertices] (x3^1) at (2,0) {};
            \node[vertices] (x4^1) at (2,2) {};
            
            \node[vertices] (x1^2) at (4,2) {};
			\node[vertices] (x2^2) at (4.5,0) {};
            \node[vertices] (x3^2) at (6.5,0) {};
            \node[vertices] (x4^2) at (7,2) {};
            \node[vertices] (x5^2) at (5.5,3) {};
            
            \node[vertices] (x1^3) at (9.5,2) {};
			\node[vertices] (x2^3) at (8.5,0) {};
            \node[vertices] (x3^3) at (10.5,0) {};
            
			\draw[ thick] (x1^1) to [out = -90, in = 90] (x2^1);
            \draw[ thick] (x2^1) to [out = 0, in = -180] (x3^1);
            \draw[ thick] (x3^1) to [out = 90, in = -90] (x4^1);
			\draw[ thick] (x4^1) to [out = -180, in = 0] (x1^1);

            \draw[ thick] (x1^2) to (x2^2);
            \draw[ thick] (x2^2) to (x3^2);
            \draw[ thick] (x3^2) to (x4^2);
            \draw[ thick] (x4^2) to (x5^2);
            \draw[ thick] (x5^2) to (x1^2);

            \draw[ thick] (x1^3) to (x2^3);
            \draw[ thick] (x2^3) to (x3^3);
            \draw[ thick] (x3^3) to (x1^3);
			
		\end{tikzpicture}}
        \end{subfigure}
        \hfill
        \begin{subfigure}[b]{\textwidth}
         \centering
\subcaptionbox{A path odd-cover of size $2$.\label{fig:3.2sigma1}}{
\begin{tikzpicture}[vertices/.style={draw, fill=black, circle, inner sep=0pt, minimum size = 4pt, outer sep=0pt}, r_edge/.style={draw=red,line width= 1,>=latex,red, dashed}, b_edge/.style={draw=blue,line width= 1,>=latex,blue}, k_edge/.style={draw=black,line width= 1,>=latex,black}, scale=1]
\path[use as bounding box] (-0.5,-0.5) rectangle (11,3.5);
            
            \node[vertices] (x1^1) at (0,2) {};
            \node[vertices] (x2^1) at (0,0) {};
            \node[vertices] (x3^1) at (2,0) {};
            \node[vertices] (x4^1) at (2,2) {};
            
            \node[vertices] (x1^2) at (4,2) {};
			\node[vertices] (x2^2) at (4.5,0) {};
            \node[vertices] (x3^2) at (6.5,0) {};
            \node[vertices] (x4^2) at (7,2) {};
            \node[vertices] (x5^2) at (5.5,3) {};
            
            \node[vertices] (x1^3) at (9.5,2) {};
			\node[vertices] (x2^3) at (8.5,0) {};
            \node[vertices] (x3^3) at (10.5,0) {};

\foreach \to/\from in {x2^1/x1^1,x1^1/x4^1,x4^1/x3^1,x2^2/x1^2,x1^2/x5^2,x5^2/x4^2,x4^2/x3^2,x2^3/x1^3,x1^3/x3^3}
\draw[b_edge]  (\to)--(\from);

\foreach \to/\from in {x3^1/x2^1,x3^2/x2^2,x3^3/x2^3}
\draw[r_edge]  (\to)--(\from);

\path [r_edge] (x3^1) edge[bend right=15, looseness=1] (x2^2);
\path [r_edge] (x3^2) edge[bend right=15, looseness=1] (x2^3);
\path [b_edge] (x3^1) edge[bend left=15, looseness=1] (x2^2);
\path [b_edge] (x3^2) edge[bend left=15, looseness=1] (x2^3);
            
		\end{tikzpicture}}
        \end{subfigure}
        \hfill
        \begin{subfigure}[b]{\textwidth}
         \centering
\subcaptionbox{A cycle odd-cover of size $2$.\label{fig:3.2sigma1}}{
\begin{tikzpicture}[vertices/.style={draw, fill=black, circle, inner sep=0pt, minimum size = 4pt, outer sep=0pt}, r_edge/.style={draw=red,line width= 1,>=latex,red, dashed}, b_edge/.style={draw=blue,line width= 1,>=latex,blue}, k_edge/.style={draw=black,line width= 1,>=latex,black}, scale=1]
\path[use as bounding box] (-0.5,-2) rectangle (11,3.5);
            
            \node[vertices] (x1^1) at (0,2) {};
            \node[vertices] (x2^1) at (0,0) {};
            \node[vertices] (x3^1) at (2,0) {};
            \node[vertices] (x4^1) at (2,2) {};
            
            \node[vertices] (x1^2) at (4,2) {};
			\node[vertices] (x2^2) at (4.5,0) {};
            \node[vertices] (x3^2) at (6.5,0) {};
            \node[vertices] (x4^2) at (7,2) {};
            \node[vertices] (x5^2) at (5.5,3) {};
            
            \node[vertices] (x1^3) at (9.5,2) {};
			\node[vertices] (x2^3) at (8.5,0) {};
            \node[vertices] (x3^3) at (10.5,0) {};

\foreach \to/\from in {x2^1/x1^1,x1^1/x4^1,x4^1/x3^1,x2^2/x1^2,x1^2/x5^2,x5^2/x4^2,x4^2/x3^2,x2^3/x1^3,x1^3/x3^3}
\draw[b_edge]  (\to)--(\from);

\foreach \to/\from in {x3^1/x2^1,x3^2/x2^2,x3^3/x2^3}
\draw[r_edge]  (\to)--(\from);

\path [r_edge] (x3^1) edge[bend right=15, looseness=1] (x2^2);
\path [r_edge] (x3^2) edge[bend right=15, looseness=1] (x2^3);
\path [b_edge] (x3^1) edge[bend left=15, looseness=1] (x2^2);
\path [b_edge] (x3^2) edge[bend left=15, looseness=1] (x2^3);

\path [r_edge] (x2^1) edge[bend right=20, looseness=1] (x3^3);
\path [b_edge] (x2^1) edge[bend right=30, looseness=0.9] (x3^3);

        \end{tikzpicture}}
        \end{subfigure}
     \caption{A path odd-cover and a cycle odd-cover of size 2 for a polycycle.} 
        \label{fig:lemma 2.5}
\end{figure}

\subsection{Linear arboricity}

In initiating the study of linear arboricity, Akiyama, Exoo, and Harary~\cite{AHE81} proved the following.
\begin{theorem}[\cite{AHE81}]\label{thm:three linear forests}
	Every graph $G$ of maximum degree $4$ admits a decomposition into three linear forests.
\end{theorem}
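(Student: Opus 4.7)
The plan is to reduce to the case that $G$ is $4$-regular, apply the $2$-factor decomposition from \Cref{cor:Petersen generalization undirected}, and then break the resulting polycycles apart into three linear forests. The conceptual starting point is that any $2$-factor becomes a linear forest once one edge is removed from each of its cycles, so given two $2$-factors whose union is $G$, the deleted edges should form a third set that \emph{also} happens to be a linear forest.

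First, we may assume $G$ is $4$-regular. A classical construction of Erd\H{o}s--Kelly embeds any graph of maximum degree $\Delta$ as a subgraph of some $\Delta$-regular graph (this is particularly clean when $\Delta$ is even, as here), and a decomposition of the supergraph into three linear forests restricts to one of $G$. Applying \Cref{cor:Petersen generalization undirected} with $\Delta=4$ then writes $G = F_1 \cup F_2$ as the edge-disjoint union of two polycycles. We will pick, for each cycle $C$ of $F_1$, an edge $e_C \in E(C)$, and for each cycle $D$ of $F_2$, an edge $f_D \in E(D)$, and set
\[ L_1 := F_1 \setminus \{e_C\}_C, \qquad L_2 := F_2 \setminus \{f_D\}_D, \qquad L_3 := \{e_C\}_C \cup \{f_D\}_D. \]
By construction, $L_1$ and $L_2$ are linear forests, since each cycle of $F_i$ becomes a path after deleting one edge. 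Moreover, every vertex of $G$ lies on exactly one cycle of $F_1$ and one of $F_2$, so it is incident to at most one edge of $L_3 \cap F_1$ and at most one edge of $L_3 \cap F_2$. Hence $L_3$ has maximum degree at most $2$ and is a disjoint union of paths and cycles.

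The main step, and principal obstacle, is to choose the edges $e_C$ and $f_D$ so that $L_3$ contains no cycles. The plan is a swap-and-iterate argument: start from any selection, and while $L_3$ contains a cycle $K$ (which necessarily alternates between $F_1$- and $F_2$-edges, since $L_3 \cap F_i$ is a matching), pick an edge $e = e_C \in K$ and exchange it for an adjacent edge $e'$ of the same cycle $C$ of $F_1$; such an $e'$ exists because $C$ has length at least $3$ in a simple graph. The swap removes $e$ from $L_3$ and breaks $K$, but may introduce a new cycle through $e'$. The delicate task is to show that a judicious rule for selecting $K$, $e$, and the adjacent edge $e'$ strictly decreases a suitable monovariant — for example, the multiset of $L_3$-cycle lengths under a lexicographic ordering, or a weighted count involving the cycles of $F_1$ and $F_2$ through which $K$ passes. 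Handling the degenerate configurations (short cycles, vertices of high shared incidence, and cases where the natural swap reintroduces precisely the cycle one aimed to destroy) is where the Akiyama--Exoo--Harary argument's cleverness is concentrated, and is the step that will require the most care.
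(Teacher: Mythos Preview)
Your setup is exactly the paper's starting point: reduce to $4$-regular, split $G$ into two edge-disjoint polycycles $H_1,H_2$, and remove a transversal matching $M_i$ from each $H_i$. But at the decisive step---making $L_3 = M_1 \cup M_2$ acyclic---you do not actually give an argument. You propose a swap-and-iterate scheme, concede that the monovariant is unspecified, and note that this ``is the step that will require the most care.'' That is a promissory note, not a proof; nothing you have written rules out the swap oscillating between two bad configurations.

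The paper (\Cref{lem:3 linear forests from transversal pair}) sidesteps iteration entirely with a one-shot redistribution. Since $M_1$ and $M_2$ are matchings, every component of $M_1 \cup M_2$ is a path or an even cycle. Let $H'$ be the union of the cycle components and pick $M' \subseteq M_2$ transversal to $H'$. Now set
\[
F_1 := (H_1 \setminus M_1) \cup M', \qquad F_2 := M_1 \cup (M_2 \setminus M'), \qquad F_3 := H_2 \setminus M_2.
\]
That $F_2$ and $F_3$ are linear forests is immediate: each was obtained from a disjoint union of paths and cycles by deleting one edge per cycle. For $F_1$, the point is that $V(M') \subseteq V(M_1)$ (every $M'$-edge lies on a cycle of $M_1 \cup M_2$, hence its endpoints are $M_1$-covered), so every $M'$-endpoint has degree $1$ in $H_1 \setminus M_1$, giving $\Delta(F_1) \le 2$. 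And each component $P$ of $H_1 \setminus M_1$ is a path whose two endpoints $u,v$ are joined by an edge of $M_1$; thus $u,v$ lie in the \emph{same} component of $M_1 \cup M_2$, which contains at most one edge of $M'$. Hence at most one of $u,v$ is incident to $M'$, so the component of $F_1$ containing $P$ still has a degree-$1$ vertex and is not a cycle.

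So the missing idea is not an iterative fix to your $L_3$ but rather to move the offending edges out of $L_3$ and into $L_1$, where a simple local argument shows they cannot close a cycle.
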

We present a version of their argument here, as our proof of \Cref{thm:oddcover3} will build directly from it. For our purposes, we assume that $G$ is Eulerian, so that $G$ admits a decomposition into two polycycles by \Cref{cor:Petersen generalization undirected}. 
For linear forest decompositions, there is no loss of generality in this assumption because any graph $G$ of maximum degree $4$ is a subgraph of some $4$-regular graph $G'$, and restricting a linear forest decomposition of $G'$ to the edges of $G$ yields a linear forest decomposition of $G$ of the same size. 

We call a matching $M$ \emph{transversal} to a graph $H$ if $M$ consists of precisely one edge from each component of $H$. We call a pair $(M_1, M_2)$ of matchings a \emph{transversal pair} for a pair of graphs $(H_1, H_2)$ if each $M_i$ ($1 \leq i \leq 2$) is transversal to $H_i$. The argument of~\cite{AHE81} now proceeds as follows.

\begin{lemma}\label{lem:3 linear forests from transversal pair}
    Let~$H_1$ and~$H_2$ be edge-disjoint polycycles, and let~$(M_1, M_2)$ be any transversal pair of matchings for~$(H_1, H_2)$. Then there exists a linear forest decomposition~$\mathcal F = \{F_1, F_2, F_3\}$ of~$H_1 \cup H_2$ such that~$H_1 \setminus M_1 \subseteq F_1 \subseteq (H_1\setminus M_1)\cup M_2$,~$M_1 \subseteq F_2 \subseteq M_1\cup M_2$, and~$H_2 \setminus M_2 = F_3$.
\end{lemma}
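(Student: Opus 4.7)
The plan is to fix $F_3 := H_2 \setminus M_2$ (which is a linear forest since $M_2$ is transversal to the polycycle $H_2$) and distribute the edges of $M_2$ between $F_1$ and $F_2$. Set $P := H_1 \setminus M_1$, a union of paths $Q_1, \dots, Q_s$ with endpoints $a_i, b_i$, so that $M_1 = \{a_i b_i : i \in [s]\}$. Call an edge $e \in M_2$ \emph{forced} if at least one endpoint of $e$ has degree $2$ in $P$ (is internal to some $Q_i$), and \emph{free} otherwise. Forced edges must go to $F_2$, since placing them in $F_1$ would push a vertex to degree $3$ in $F_1$. Free edges carry a genuine binary choice, and I need to make those choices so that both $F_1$ and $F_2$ end up acyclic; the bound $\Delta \leq 2$ on each will be automatic, since $F_1 \supseteq P$ only gains free edges at its non-internal vertices, and $F_2 \subseteq M_1 \cup M_2$ is a union of two matchings.

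I then reduce cycle-avoidance to a combinatorial problem on an auxiliary graph $A$. Its vertex set is $\{a_i, b_i : i \in [s]\} \cup (V(H_2) \setminus V(P))$, and its edges are the \emph{$V$-edges} $a_i b_i$ together with the \emph{$C$-edges}, which are the free edges of $M_2$. Each $a_i$ or $b_i$ lies in exactly one $V$-edge and at most one $C$-edge (since $M_2$ is a matching), while external vertices lie in no $V$-edge and at most one $C$-edge; hence $A$ has maximum degree $2$. Moreover $A$ is simple: a duplicated edge between some $a_i$ and $b_i$ would force $a_i b_i$ to lie in both $M_1 \subseteq H_1$ and $M_2 \subseteq H_2$, contradicting the edge-disjointness of $H_1$ and $H_2$. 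Consequently every cycle of $A$ alternates $V$- and $C$-edges, has length at least $4$, and in particular contains at least two $C$-edges.

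The main step is the correspondence showing that cycles of $F_1$ (resp.~$F_2$) are exactly the cycles of $A$ whose $C$-edges all lie in $F_1$ (resp.~$F_2$). For $F_1$, internal vertices of $P$ contribute both of their $P$-edges, so any cycle of $F_1$ must traverse each $Q_i$ it visits completely and then leave via a free $M_2$-edge at an endpoint, giving a cycle of $A$. For $F_2$, each vertex of a cycle is incident to an $M_1$-edge, so it lies in $\{a_i, b_i : i \in [s]\}$ and is not internal to $P$; consequently the $M_2$-edges of the cycle must be free rather than forced, and the cycle again matches one in $A$.

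With the correspondence in hand, the remaining task is to $2$-color the $C$-edges of $A$ so that every cycle of $A$ sees both colors. Since each cycle of $A$ has at least two $C$-edges, this is trivial: process the components of $A$ independently, assigning one $C$-edge of each cycle to $F_1$ and another to $F_2$ and distributing the remaining $C$-edges arbitrarily. The step I expect to be trickiest is the cycle-correspondence argument, in particular verifying the simplicity of $A$ (which rests delicately on the edge-disjointness of $H_1$ and $H_2$) and the observation that forced $M_2$-edges cannot participate in any cycle of $F_2$. Once those points are established, the $2$-coloring step is immediate.
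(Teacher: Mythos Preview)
Your proof is correct and rests on the same idea as the paper's: the only obstructions to $F_1$ and $F_2$ being linear forests are the cycles of $M_1 \cup M_2$, and each such cycle contains at least two $M_2$-edges that can be split between the two parts. The paper's presentation is more direct---it skips the forced/free distinction and the auxiliary graph, simply taking $M' \subseteq M_2$ to consist of one edge from each cycle component of $M_1 \cup M_2$ and setting $F_1 := (H_1 \setminus M_1) \cup M'$, $F_2 := M_1 \cup (M_2 \setminus M')$---but your graph $A$ is precisely $M_1 \cup M_2$ with the forced $M_2$-edges deleted (and those cannot lie on any cycle of $M_1 \cup M_2$ anyway), so the two arguments coincide.
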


\begin{proof}
    Since~$M_1$ and~$M_2$ are matchings, every component of~$M_1 \cup M_2$ is a path or an even cycle (of length at least~$4$ since~$H_1$ and~$H_2$ are edge-disjoint). Let~$H'$ be the union of those components that are cycles, and let~$M' \subseteq M_2$ be a matching that is transversal to~$H'$. Define
    \begin{align*}
        F_1 &:= (H_1 \setminus M_1) \cup M'; \\
        F_2 &:= M_1 \cup (M_2 \setminus M'); \\
        F_3 &:= H_2 \setminus M_2.
    \end{align*}
    Clearly,~$F_1, F_2,$ and~$F_3$ are edge-disjoint and decompose~$H_1 \cup H_2$. It now remains to show that each~$F_i$ ($1 \leq i \leq 3$) is a linear forest.

    This is immediate for~$F_2$ and~$F_3$, as they were both obtained by deleting an edge from each cycle in a vertex-disjoint union of paths and cycles. We now consider~$F_1$. To begin, since~$V(M') \subseteq V(M_1)$ by construction, every endpoint of every edge in~$M'$ has degree~$1$ in~$H_1 \setminus M_1$. This means that every vertex in~$F_1$ has degree at most~$2$, and every component~$C$ of~$F_1$ contains some component~$P$ of~$H_1 \setminus M_1$, where for some edge~$uv \in M_1$,~$P$ is a path from~$u$ to~$v$. Since every component of~$M_1 \cup M_2$ contains at most one edge from~$M'$, and~$H_1$ and~$H_2$ are edge-disjoint, at most one of~$u$ or~$v$ can be incident to an edge from~$M'$. That is,~$C$ contains a vertex of degree~$1$ and is not a cycle. Thus~$F_1$ is a linear forest as well.
\end{proof}

\section{Improved bounds on diameters of partition polytopes}\label{sec:pp}

In this section, we devise improved upper and lower bounds on the diameters of partition polytopes. 

\subsection{Upper bound}\label{sec:upperboundpp}

First, we prove \Cref{thm:improved upper bound}. The strategy is simple: given nonnegative integers~$\kappa_1 \geq \cdots \geq \kappa_n$ summing to~$m$ and two $(m,n)$-partitions~$p,p' : [m] \to [n]$ with shape $(\kappa_1, \ldots, \kappa_n)$, we wish to express~$p'$ in the form~$p\sigma_d \cdots \sigma_1$ for~$p$-cycles~$\sigma_1, \ldots, \sigma_d$, with~$d \leq \kappa_1 + \left \lceil \kappa_2/2 \right \rceil$. First, we write~$p'$ as~$p\sigma_{\kappa_1 - \kappa_2} \cdots \sigma_1\pi_{\kappa_2} \cdots \pi_1$ as in \Cref{cor:decomposition into balanced permutations}, with each~$\sigma_i$ a~$p$-cycle and each~$\pi_i$ a~$p$-balanced permutation, all with disjoint supports. Next, we replace~$\pi_{\kappa_2} \cdots \pi_1$ with a product of~$p$-cycles. Initially, we see that replacing each~$\pi_i$ individually with two~$p$-cycles by \Cref{lem:one permutation two cycles} yields a decomposition of~$\mathrm{CDG}(p,p')$ of length~$(\kappa_1 - \kappa_2) + 2\kappa_2 = \kappa_1 + \kappa_2$, as in~\cite{borgwardt2013diameter}. To improve this bound to~$\kappa_1 + \lceil \kappa_2/2 \rceil$, we instead take the~$\pi_i$ in pairs, replacing each pair with three~$p$-cycles. This is made possible by the following lemma, which is equivalent to \Cref{thm:out-degree 2}.

\begin{lemma}\label{lem:two permutations three cycles}
	Let~$p : [m] \to [n]$, and let~$\pi_1, \pi_2 \in S_m$ be~$p$-balanced permutations with disjoint supports. Then there exist~$p$-cycles~$\sigma_1, \sigma_2, \sigma_3 \in S_m$ such that~$\supp(\sigma_3\sigma_2\sigma_1) = \supp(\pi_2\pi_1)$ and~$p\sigma_3\sigma_2\sigma_1 = p\pi_2\pi_1$.
\end{lemma}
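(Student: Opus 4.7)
The plan is to mirror the argument of Akiyama–Exoo–Harary (\Cref{lem:3 linear forests from transversal pair}) in the directed/permutation setting. Using \Cref{lem:Petersen generalization directed}, one can view $\pi_1$ and $\pi_2$ as edge-disjoint directed polycycles $H_1, H_2$ in $\mathrm{CDG}(p, p\pi_2\pi_1)$, each of which is a vertex-disjoint union of simple directed cycles (vertex-disjoint within each $H_i$, but possibly sharing vertices across $H_1, H_2$). I plan to construct the three desired $p$-cycles $\sigma_1, \sigma_2, \sigma_3$ by choosing transversal elements $M_1 \subseteq \supp(\pi_1)$ and $M_2 \subseteq \supp(\pi_2)$ — one item per cycle of each $\pi_i$ — playing the role of the transversal matchings in \Cref{lem:3 linear forests from transversal pair}.

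With these transversals fixed, I would take $\sigma_3$ to be the ``big cycle'' built from $H_2 \setminus M_2$ in the style of \Cref{lem:one permutation two cycles}, stringing the paths obtained by deleting the transversal edges into a single directed cycle that uses all of $\supp(\pi_2)$; symmetrically $\sigma_1$ is built from $H_1 \setminus M_1$ strung together using $\supp(\pi_1)$ only, so both are automatically $p$-balanced since they inherit injectivity on their support from $\pi_2$ and $\pi_1$ respectively. The middle cycle $\sigma_2$ then plays the role of $F_2$ in \Cref{lem:3 linear forests from transversal pair}: it must combine the ``corrections'' $\alpha_2$ (from $\pi_1 = \alpha_2\alpha_1$ via \Cref{lem:one permutation two cycles}) and $\beta_2$ (from $\pi_2 = \beta_2\beta_1$) into a single $p$-cycle whose composition with $\sigma_3$ and $\sigma_1$ has the same $p$-action as $\pi_2\pi_1$. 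The decisive step is the analogue of the selection of $M' \subseteq M_2$ transversal to the cyclic components of $M_1 \cup M_2$ in the AEH proof: by examining the auxiliary multigraph on clusters whose edges are the transversal items in $M_1 \cup M_2$ and using the fact that its components are paths/even cycles, one can identify which transversal of $\pi_2$ to ``shift'' into $\sigma_1$ and which transversals of $\pi_1$ to fold into $\sigma_2$.

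I would then verify three things in order. First, that $\sigma_1$ and $\sigma_3$ are single $p$-balanced cycles, which follows from the \Cref{lem:one permutation two cycles}-style concatenation together with the inherited $p$-injectivity on $\supp(\pi_1)$ and $\supp(\pi_2)$. Second, that $\sigma_2$ is a single $p$-cycle — this is where the transversal choice matters, because naively $\alpha_2\beta_2$ (or $\beta_2\alpha_2$) is a product of two disjoint cycles rather than a single cycle, and the $p$-values of their supports may collide; the AEH-style routing through $M_1 \cup M_2$ is precisely what merges them into a single $p$-balanced cycle. Third, that $p\sigma_3\sigma_2\sigma_1 = p\pi_2\pi_1$ and that $\supp(\sigma_3\sigma_2\sigma_1) = \supp(\pi_2\pi_1)$ — the first equality is essentially bookkeeping given the construction, and the support condition follows from the fact that every item of $\supp(\pi_1) \cup \supp(\pi_2)$ appears in exactly one of $\sigma_1, \sigma_2, \sigma_3$ (up to interactions with the transversals) while no item outside this set is moved.

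The main obstacle is the second verification, namely guaranteeing that $\sigma_2$ is a \emph{single} $p$-balanced cycle. There is no obstruction to making it $p$-balanced — both $\alpha_2$ and $\beta_2$ have $p$-injective supports drawn from the distinct cluster sets visited by each polycycle, and within each $H_i$ the cycles are vertex-disjoint — but producing a single cycle rather than a product of two disjoint cycles requires the transversals to form a path-like structure in the cluster-level auxiliary graph. This is exactly where the AEH argument via components of $M_1 \cup M_2$ is imported, and I expect this is the step that forces the overall proof length.
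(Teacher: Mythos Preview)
Your plan has the right shape and correctly identifies where the real work lies, but it stops precisely at the hard step: you never actually construct the middle cycle, and the phrase ``AEH-style routing through $M_1\cup M_2$'' is not specific enough to guarantee either that $\sigma_2$ is a \emph{single} cycle or that it is $p$-balanced. In your arrangement the two big cycles sit at positions $\sigma_1$ and $\sigma_3$, so the correction must equal (at least up to the kernel of $p$) something like $\beta_1^{-1}\beta_2\beta_1\alpha_2$; this is no longer simply the product of the two small Lemma~\ref{lem:one permutation two cycles} corrections, and your sketch does not explain how a transversal choice on the item side translates into $p$-injectivity of that conjugated support on the cluster side. The linear-forest mechanism of \Cref{lem:3 linear forests from transversal pair} controls cycle formation in an undirected graph of maximum degree $2$; it does not by itself separate the cluster images of your two transversals, which is what $p$-balancedness demands here.

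The paper takes a more direct route that avoids importing the AEH decomposition. It first reduces to the case where some $x\in\supp(\pi_1)$ and $y\in\supp(\pi_2)$ share a cluster. Then it picks, for each cycle of $\pi_1$ and of $\pi_2$, a \emph{pair} of cluster values and applies the trivial observation (\Cref{lem:disjoint representative vertices}) that the union of two matchings on $[n]$ is $2$-colourable; this lets one choose the representative $x_1^i$ of each cycle of $\pi_1$ with $p(x_1^i)\in S_1$ and the representative $y_1^j$ of each cycle of $\pi_2$ with $p(y_1^j)\in S_2$, so the whole representative set is automatically $p$-injective. The three cycles are then written down explicitly: $\sigma_1$ is the long cycle through $\supp(\pi_1)$, $\sigma_2$ is the long cycle through $\supp(\pi_2)$ with the single element $y_{d_s}^s$ swapped for $x_1^1$ (they lie in the same cluster, so $\sigma_2$ stays $p$-balanced), and $\sigma_3$ is the short cycle on the representatives together with $y_{d_s}^s$. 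One checks the identity $\pi_2\pi_1=(x_1^1\;y_{d_s}^s)\,\sigma_3\sigma_2\sigma_1$; the transposition is between two items in the same cluster, so it vanishes after composing with $p$. Neither the $2$-colouring device nor this same-cluster swap appears in your plan, and without them the ``single $p$-cycle'' verification you flag as the main obstacle does not go through.
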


To prove this claim, we need the following simple observation about matchings in graphs. We denote by $K_n$ the complete undirected graph with vertex set $[n]$.

\begin{lemma}\label{lem:disjoint representative vertices}
    Let $M_1, M_2 \subseteq K_n$ be two matchings. Then there exists a partition $\{S_1, S_2\}$ of $V(K_n)$ such that for each $i \in \{1,2\}$, every edge $e \in M_i$ intersects $S_1$ and $S_2$ in exactly one vertex.
\end{lemma}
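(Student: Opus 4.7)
The plan is to recast the problem in terms of graph coloring: a partition $\{S_1,S_2\}$ of $V(K_n)$ satisfies the conclusion of the lemma if and only if the corresponding $2$-coloring of $V(K_n)$ (sending $v$ to $i$ when $v \in S_i$) is a proper $2$-coloring of the graph $H := M_1 \cup M_2$. Since vertices not in $V(H)$ can be assigned to either part arbitrarily, it suffices to show that $H$ is bipartite.

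First, I would observe that each vertex of $H$ has degree at most $2$, because it lies in at most one edge of $M_1$ and at most one edge of $M_2$. Consequently, every connected component of $H$ is a path or a cycle, so bipartiteness of $H$ reduces to the claim that $H$ contains no odd cycle.

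To establish this claim, consider any cycle $C \subseteq H$ and any vertex $v \in V(C)$. Since $v$ has degree exactly $2$ in $H$, its two incident edges in $C$ must lie in different matchings; otherwise one of $M_1$ or $M_2$ would contain two edges at $v$. The same reasoning shows that no edge of $C$ can lie in $M_1 \cap M_2$, since such an edge would force each of its endpoints to have degree $1$ in $H$ rather than $2$. Therefore the edges of $C$ alternate between $M_1 \setminus M_2$ and $M_2 \setminus M_1$, so $|C|$ is even. Hence $H$ has no odd cycle and admits a proper $2$-coloring, which extends to the desired partition of $V(K_n)$.

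The argument is entirely elementary and I do not anticipate a genuine obstacle; the one subtlety is the treatment of edges that lie in $M_1 \cap M_2$, which is handled by the observation that such edges can only appear on path components of $H$ and not on any cycle.
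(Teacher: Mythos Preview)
Your proof is correct and follows essentially the same approach as the paper: form the graph $H = M_1 \cup M_2$, observe that it has maximum degree at most $2$ and hence no odd cycles, and take $S_1, S_2$ to be the color classes of a proper $2$-coloring. The paper's proof is more terse (it simply asserts that the union of two matchings has no odd cycles), while you spell out the alternating argument and the $M_1 \cap M_2$ subtlety, but the idea is the same.
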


\begin{proof}
    Let~$G \subseteq K_n$ be the union of~$M_1$ and~$M_2$. Since~$M_1$ and~$M_2$ are matchings,~$G$ does not contain odd cycles, so~$\chi(G) \leq 2$. Thus, we can take~$S_1$ and~$S_2$ to be the two color classes in any proper~$2$-coloring of~$G$.
\end{proof}

\begin{proof}[Proof of \Cref{lem:two permutations three cycles}]
	If~$\pi_2\pi_1$ is~$p$-balanced, then~$\pi_2\pi_1$ is a product of two~$p$-cycles and the trivial~$p$-cycle by \Cref{lem:one permutation two cycles}. Thus we may assume that there exist some~$x \in \supp(\pi_1)$ and~$y \in \supp(\pi_2)$ with~$p(x) = p(y)$.
    
    We first write~$\pi_1$ and~$\pi_2$ as products of disjoint non-trivial cycles~$C_1 \cdots C_t$ and~$D_1 \cdots D_s$, respectively, so that~$x \in \supp(C_1)$ and~$y \in \supp(D_s)$. Since each of the cycles~$C_i$ is non-trivial and~$p$-balanced, there exists a pair~$e_i \subseteq p(\supp(C_i))$ for all~$i \in [t]$. Similarly, there exists a pair~$f_i \subseteq p(\supp(D_i))$ for all~$i \in [s]$, and we can choose~$f_s = \{p(y), p(\pi_2(y))\}$. Let~$M_1$ and~$M_2$ be the matchings in~$K_n$ given by~$M_1 := \{e_i : i \in [t]\}$ and~$M_2 := \{f_i : i \in [s]\}$, and let~$\{S_1, S_2\}$ be a partition of~$[n]$ as in \Cref{lem:disjoint representative vertices}, so that every~$e_i$~$(i \in [t])$ and every~$f_i$ ($i \in [s]$) intersects~$S_1$ and~$S_2$ in exactly one vertex. Assume without loss of generality that~$p(y) = p(x) \in S_1$, and therefore~$p(\pi_2(y)) \in S_2$.
    
    Note that~$S_1$ intersects every~$p(C_i)$ ($i \in [t]$), and~$S_2$ intersects every~$p(D_i)$ ($i \in [s]$). Therefore, for each~$i \in [t]$, we may write~$C_i = \left(x^i_1 \: x^i_2 \: \cdots \: x^i_{c_i}\right)$ with~$p\left(x_1^i\right) \in S_1$, and for each~$i \in [s]$, we may write~$D_i = \left(y^i_1 \: y^i_2 \: \cdots \: y^i_{d_i}\right)$, with~$p\left(y_1^i\right) \in S_2$. Moreover, we can ensure that~$x_1^1 = x$ and~$y^s_1 = \pi_2(y)$, which means that~$p(x^1_1) = p(x) = p(y) = p(y^s_{d_s})$.
    
    Now we have
    \[\pi_2\pi_1 = (y_1^1\cdots y_{d_1}^1)\cdots(y_1^s\dots y_{d_s}^s)(x_1^1\cdots x_{c_1}^1)\cdots(x_1^t\dots x_{c_t}^t)\]
    and setting
    \begin{align*}
		\sigma_3 &:= \left(y^s_1 \: \cdots y^2_1 \: y^1_1 \: x^t_1 \: \cdots x^2_1 \: y^s_{d_s} \right); \\
		\sigma_2 &:= \left(y^1_1 \: \cdots \: y^1_{d_1} \: y^2_1 \: \cdots \: y^2_{d_2} \: \cdots \: y^s_1 \: \cdots \: y^s_{d_s-1} \: x^1_1\right); \\
		\sigma_1 &:= \left(x^1_1 \: \cdots \: x^1_{c_1} \: x^2_1 \: \cdots \: x^2_{c_2} \: \cdots \: x^t_1 \: \cdots \: x^t_{c_t}\right),
	\end{align*}
    we have
	\begin{equation}\label{eq:three-step}
	    \pi_2 \pi_1 = \left(x^1_1 \: y^s_{d_s}\right) \sigma_3 \sigma_2 \sigma_1.
	\end{equation}
	See Figure \ref{fig:two permutations three cycles} for an illustration of these permutations.
    
    Applying~$p$ to both sides of \eqref{eq:three-step} gives~$p\pi_2\pi_1 = p\sigma_3\sigma_2\sigma_1$ since~$p\left(x^1_1 \: y^s_{d_s}\right) = p$. Note that~$\sigma_1, \sigma_2$, and~$\sigma_3$ are all~$p$-cycles since~$\pi_1$ and~$\pi_2$ are~$p$-balanced,~$p\left(x^1_1\right) = p\left(y^s_{d_s}\right)$, and~$\left\{p\left(x^i_1\right) : i \in [t]\right\} \subseteq S_1$ is disjoint from~$\left\{p\left(y^i_1\right) : i \in [s]\right\} \subseteq S_2$. Also, it is easily seen that 
	\[\supp(\sigma_3\sigma_2\sigma_1) = \bigcup_{i=1}^t \left\{x^i_1, \ldots, x^i_{c_i}\right\} \cup \bigcup_{i=1}^s \left\{y^i_1, \ldots, y^i_{d_i}\right\} = \supp(\pi_2 \pi_1).\]
	This completes the proof.
\end{proof}

\begin{figure}[p]
     \centering
     \begin{subfigure}[b]{\textwidth}
         \centering
\begin{tikzpicture}

            \node[knode] (x1^1) at (0,0) {\small $\textcolor{blue}{x_1^1}, \textcolor{blue}{y_3^3}$};
            \node[knode] (x2^1) at (0,2) {\small $x_2^1, y_4^1$};
            \node[knode] (x3^1) at (2,2) {\small $x_3^1, \textcolor{red}{y_1^1}$};
            \node[knode] (x4^1) at (2,0) {\small $x_4^1, \textcolor{red}{y_1^2}$};
            
            \node[knode] (x1^2) at (4.5,0) {\small $\textcolor{blue}{x_1^2}$};
		\node[knode] (x2^2) at (4,2) {\small $x_2^2$};
            \node[knode] (x3^2) at (5.5,3) {\small $x_3^2, y_2^2$};
            \node[knode] (x4^2) at (7,2) {\small $x_4^2, y_2^3$};
            \node[knode] (x5^2) at (6.5,0) {\small $x_5^2, y_3^2$};

            \node[knode] (x1^3) at (8.5,0) {\small $\textcolor{blue}{x_1^3}$};
		\node[knode] (x2^3) at (9.5,2) {\small $x_2^3, y_2^1$};
            \node[knode] (x3^3) at (10.5,0) {\small $x_3^3, \textcolor{red}{y_1^3}$};
            
            \node[knode] (y4^1) at (5,4.5) {\small $y_3^1$};
            
		\draw[->, thick] (x1^1) to  (x2^1);
            \draw[->, thick] (x2^1) to  (x3^1);
            \draw[->, thick] (x3^1) to  (x4^1);
		\draw[->, thick] (x4^1) to  (x1^1);

            \draw[->, thick] (x1^2) to (x2^2);
            \draw[->, thick] (x2^2) to (x3^2);
            \draw[->, thick] (x3^2) to (x4^2);
            \draw[->, thick] (x4^2) to (x5^2);
            \draw[->, thick] (x5^2) to (x1^2);

            \draw[->, thick] (x1^3) to (x2^3);
            \draw[->, thick] (x2^3) to (x3^3);
            \draw[->, thick] (x3^3) to (x1^3);
            
            \tikzset{decoration={snake,amplitude=.4mm,segment length=2mm, post length=1mm, pre length=0mm}}

            \draw[->, green, thick, decorate] (x2^1) to [out=30, in=150] (x3^1);
            \draw[->, green, thick, decorate] (x3^1) to [out=60, in=120, looseness=0.8] (x2^3);
            \draw[->, green, thick, decorate] (x2^3) to [out=90, in=0] (y4^1);
		\draw[->, green, thick, decorate] (y4^1) to [out=180, in=60] (x2^1);

            \draw[->, green, thick, decorate] (x4^1) to [bend right=20] (x3^2);
            \draw[->, green, thick, decorate] (x3^2) to (x5^2);
            \draw[->, green, thick, decorate] (x5^2) to [bend left=30] (x4^1);
            
		\draw[->, green, thick, decorate] (x3^3) to (x4^2);
            \draw[->, green, thick, decorate] (x4^2) to [out=-60, in=-30, looseness=1.4] (x1^1);
            \draw[->, green, thick, decorate] (x1^1) to [out=-50, in=230, looseness=0.6] (x3^3);
			
		\end{tikzpicture}
         \caption{An example of a CDG representing two $p$-balanced permutations $\pi_1$ and $\pi_2$. The black solid edges represent $\pi_1=(x_1^1\:x_2^1\:x_3^1\:x_4^1)(x_1^2\:x_2^2\:x_3^2\:x_4^2\:x_5^2)(x_1^3\:x_2^3\:x_3^3)$ 
         and the green wavy edges represent $\pi_2=(y_1^1\:y_2^1\:y_3^1\:y_4^1)(y_1^2\:y_2^2\:y_3^2)(y_1^3\:y_2^3\:y_3^3)$. Blue labels denote items in clusters known to be in $S_1$, and red labels denote items in clusters known to be in $S_2$.
         }\label{fig:3.3CDG}
     \end{subfigure}

     \hfill
     \begin{subfigure}[b]{\textwidth}
\centering
\begin{tikzpicture}

            \node[circle, inner sep=0] (x1^1) at (0,0) {\small $\textcolor{blue}{x_1^1}$};
            \node[circle, inner sep=0] (y3^3) at (0.5,0) {\small $\textcolor{blue}{y_3^3}$};
            \node[circle, inner sep=0] (x2^1) at (0,2) {\small $x_2^1$};
            \node[circle, inner sep=0] (y1^1) at (2.5,2) {$\textcolor{red}{y_1^1}$};
            \node[circle, inner sep=0] (x3^1) at (2,2) {\small $x_3^1$};
            \node[circle, inner sep=0] (y2^1) at (10,2) {\small $y_2^1$};
            \node[circle, inner sep=0] (x4^1) at (2,0) {\small $x_4^1$};
            \node[circle, inner sep=0] (y1^2) at (2.5,0) {\small $\textcolor{red}{y_1^2}$};
            
            \node[circle, inner sep=0] (x1^2) at (4.5,0) {\small $\textcolor{blue}{x_1^2}$};
		\node[circle, inner sep=0] (x2^2) at (4,2) {\small $x_2^2$};
            \node[circle, inner sep=0] (x3^2) at (5.5,3) {\small $x_3^2$};
            \node[circle, inner sep=0] (y3^2) at (7,0) {\small $y_3^2$};
            \node[circle, inner sep=0] (x4^2) at (7,2) {\small $x_4^2$};
            \node[circle, inner sep=0] (y2^3) at (7.5,2) {\small $y_2^3$};
            \node[circle, inner sep=0] (x5^2) at (6.5,0) {\small $x_5^2$};
            \node[circle, inner sep=0] (y2^2) at (6,3) {\small $y_2^2$};
            
            \node[circle, inner sep=0] (x1^3) at (8.5,0) {\small $\textcolor{blue}{x_1^3}$};
		\node[circle, inner sep=0] (x2^3) at (9.5,2) {\small $x_2^3$};
            \node[circle, inner sep=0] (y3^1) at (5,4.5) {\small $y_3^1$};
            \node[circle, inner sep=0] (x3^3) at (10.5,0) {\small $x_3^3$};
            \node[circle, inner sep=0] (y1^3) at (11,0) {\small $\textcolor{red}{y_1^3}$};

            \node (y4^1) at (0.5,2) {\small $y_4^1$};
            
		\draw[->, thick] (x1^1) -- (x2^1);
            \draw[->, thick] (x2^1) to [out = -70, in = -120, looseness=0.6] (x3^1);
            \draw[->, thick] (x3^1) -- (x4^1);
			
            \draw[->, thick] (x4^1) to [out = 60, in = 130, looseness=0.7] (x1^2);

            \draw[->, thick] (x1^2) -- (x2^2);
            \draw[->, thick] (x2^2) -- (x3^2);
            \draw[->, thick] (x3^2) to [out=-90, in=160, looseness=0.1] (x4^2);
            \draw[->, thick] (x4^2) to [out=-120, in=90, looseness=0.1] (x5^2);
            
            \draw[->, thick] (x5^2) to [out = 60, in = 120, looseness=0.8] (x1^3);

            \draw[->, thick] (x1^3) -- (x2^3);
            \draw[->, thick] (x2^3) -- (x3^3);
            
            \draw[->, thick] (x3^3) to [out=-130, in=-90, looseness=0.6] (x1^1);

            \tikzset{decoration={snake,amplitude=.4mm,segment length=2mm, post length=1mm, pre length=0mm}}
                       
            \draw[->, green, decorate, thick] (x1^1) to (y1^1);
            \draw[->, green, decorate, thick] (y1^1) to [out = 60, in = 120, looseness=0.8] (y2^1);
            \draw[->, green, decorate, thick] (y2^1) to [out = 90, in = 0, looseness=1] (y3^1);
            \draw[->, green, decorate, thick] (y3^1) to [out = 180, in = 60] (y4^1);
            \draw[->, green, decorate, thick] (y4^1) -- (y1^2);
            \draw[->, green, decorate, thick] (y1^2) to [bend right=20] (y2^2);
            \draw[->, green, decorate, thick] (y2^2) to  (y3^2);
            \draw[->, green, decorate, thick] (y3^2) to [out = -60, in = -140, looseness=0.6] (y1^3);
            \draw[->, green, decorate, thick] (y1^3) to [out=120, in=-20, looseness=0.1] (y2^3);
            \draw[->, green, decorate, thick] (y2^3) to [out = -60, in = -50, looseness=1.2] (x1^1);

            \draw[->, magenta, dashed, thick] (y3^3) to [out = -90, in =-90, looseness=1] (y1^3);
            \draw[->, magenta, dashed, thick] (y1^3) to [out = -115, in = -65, looseness=1] (y1^2);
            \draw[->, magenta, dashed, thick] (y1^2) to (y1^1);
            \draw[->, magenta, dashed, thick] (y1^1) to (x1^3);
            \draw[->, magenta, dashed, thick] (x1^3) to [out = -120, in = -60] (x1^2);
            \draw[->, magenta, dashed, thick] (x1^2) to [out = -120, in = -60] (y3^3);

		\end{tikzpicture}
        \caption{Items moved in $\sigma_1, \sigma_2, \sigma_3$. The black solid edges represent $\sigma_1=(x_1^1\:x_2^1\:x_3^1\:x_4^1\:x_1^2\:x_2^2\:x_3^2\:x_4^2\:x_5^2\:x_1^3\:x_2^3\:x_3^3)$, the green wavy edges represent $\sigma_2=(y_1^1\:y_2^1\:y_3^1\:y_4^1\: y_1^2\:y_2^2\:y_3^2 \: y_1^3\:y_2^3\:x_1^1)$, and the magenta dashed edges represent $\sigma_3 = (y_1^3\:y_1^2 \:y_1^1\:x_1^3\:x_2^2\:y_3^3)$.} \label{fig:sigma123}
     \end{subfigure}
     \hfill
     \caption{An example of a CDG consisting of edges from $\pi_1$ and $\pi_2$ and depiction of the three item moves $\sigma_1, \sigma_2, \sigma_3$ in the proof of \Cref{lem:two permutations three cycles}.} \label{fig:two permutations three cycles}
     \end{figure}
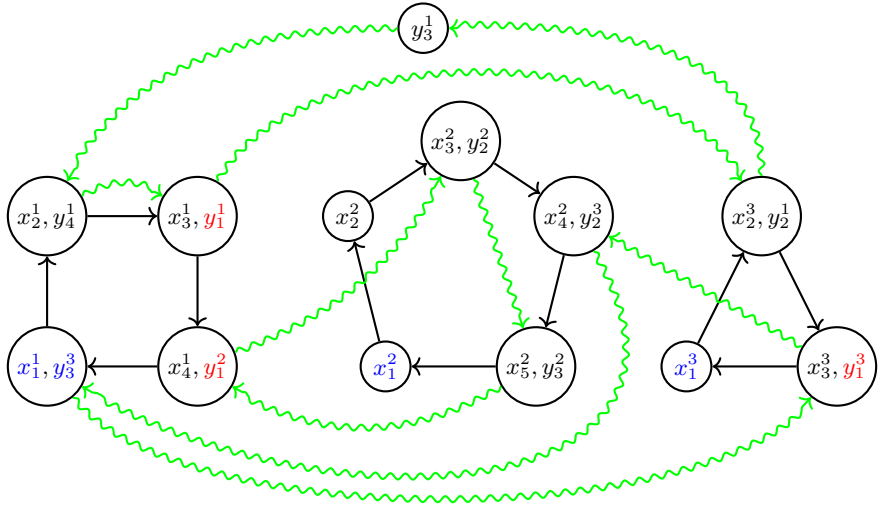
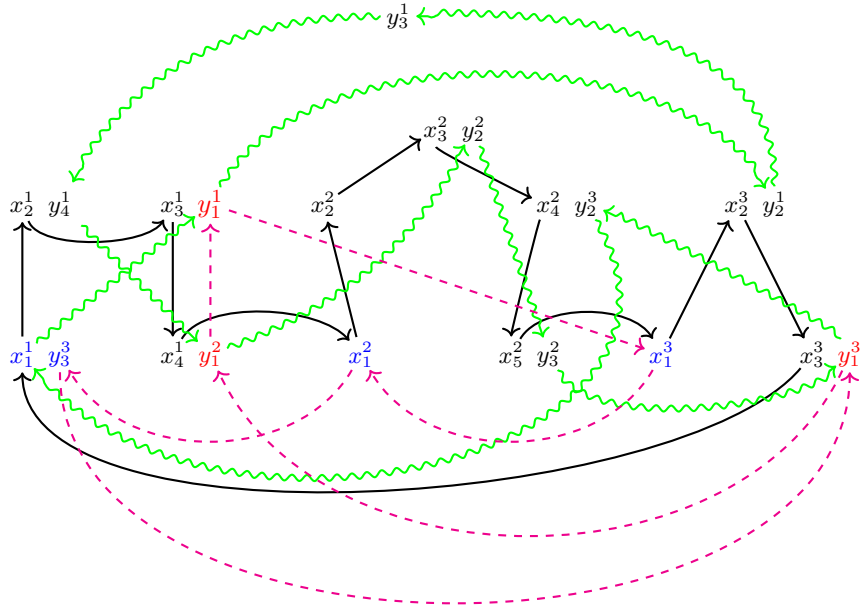

We now deduce the improved upper bound of $\kappa_1 + \left\lceil \kappa_2/2 \right\rceil$ for the diameter of general partition polytopes.

\begin{proof}[Proof of \Cref{thm:improved upper bound}]
    Let~$\kappa_1 \geq \cdots \geq \kappa_n \geq 0$ be integers summing to~$n$, and let~$p, p' : [m] \to [n]$ be two $(m,n)$-partitions with shape $(\kappa_1, \ldots, \kappa_n)$. By \Cref{prop:diameter reformulations}, it suffices to exhibit an expression of~$p'$ in the form~$p' = p\sigma_d \cdots \sigma_1$ for some sequence~$(\sigma_1, \ldots, \sigma_d)$ of~$p$-cycles of length~$d := \kappa_1 + \left \lceil \kappa_2 /2 \right \rceil$. Let~$\sigma_1, \ldots, \sigma_{\kappa_1 - \kappa_2}$ and~$\pi_1, \ldots, \pi_{\kappa_2}$ be disjoint~$p$-balanced permutations in~$S_m$ as in \Cref{cor:decomposition into balanced permutations}, so that~$\sigma_1, \ldots, \sigma_{\kappa_1 - \kappa_2}$ are~$p$-cycles, and
    \begin{equation}\label{eq:initial decomposition}
        p' = p\sigma_{\kappa_2-\kappa_1}\cdots \sigma_1\pi_{\kappa_2}\cdots \pi_1.
    \end{equation}
    By \Cref{lem:two permutations three cycles}, for each~$1 \leq i \leq \left\lfloor \kappa_2/2 \right\rfloor$, we can find~$p$-cycles~$\sigma_{\kappa_1-\kappa_2 + 3i-2}, \sigma_{\kappa_1-\kappa_2 + 3i-1}, \sigma_{\kappa_1-\kappa_2 + 3i}$ such that
    \begin{align*}
        p\sigma_{\kappa_1-\kappa_2 + 3i}\sigma_{\kappa_1-\kappa_2 + 3i-1}\sigma_{\kappa_1-\kappa_2 + 3i-2} &= p\pi_{2i}\pi_{2i-1}; \\
        \supp(\sigma_{\kappa_1-\kappa_2 + 3i}\sigma_{\kappa_1-\kappa_2 + 3i-1}\sigma_{\kappa_1-\kappa_2 + 3i-2}) &= \supp(\pi_{2i}\pi_{2i-1}).
    \end{align*}
    So far we have defined~$p$-cycles~$\sigma_1, \ldots, \sigma_t$ for
    \[t := \kappa_1 - \kappa_2 + 3\left \lfloor \kappa_2/2 \right \rfloor = \left \{\begin{array}{l l} 
    d & \text{if~$\kappa_2$ is even}, \\
    d - 2 & \text{if~$\kappa_2$ is odd}.
    \end{array}\right.\]
    If~$\kappa_2$ is odd, then we can also find two~$p$-cycles~$\sigma_{d-1}, \sigma_{d}$ with~$\pi_{\kappa_2} = \sigma_{d}\sigma_{d-1}$ by \Cref{lem:one permutation two cycles}. Now because the~$\sigma_1, \ldots, \sigma_{\kappa_1 - \kappa_2}, \pi_1, \ldots, \pi_{\kappa_2}$ have disjoint supports, we can replace each~$\pi_{2i}\pi_{2i-1}$ with the product~$\sigma_{\kappa_1-\kappa_2 + 3i}\sigma_{\kappa_1-\kappa_2 + 3i-1}\sigma_{\kappa_1-\kappa_2 + 3i-2}$ ($1 \leq i \leq \left \lfloor \kappa_2/2 \right \rfloor)$ in \eqref{eq:initial decomposition}, and if~$\kappa_2$ is odd, then we can additionally replace~$\pi_{\kappa_2}$ with~$\sigma_{d}\sigma_{d-1}$ to obtain
    \[p' = p\sigma_d \cdots \sigma_1,\]
    as desired.
\end{proof}

\subsection{Lower bound}\label{sec:lowerboundpp}
Next, we generalize the lower bound from~\cite{borgwardt2013diameter} on the diameter of partition polytopes. The following result holds for all values $\kappa_1,\dots,\kappa_n$ and $n\geq 4$; exact diameters are known for $n\leq 3$~\cite{borgwardt2013diameter}. 

\begin{lemma}\label{lem:disjoint 2-cycles lower bound}
	For any integers~$n \geq 4$ and~$\kappa_1 \geq \cdots \geq \kappa_n \geq 0$, the diameter of the partition polytope~$\PP(\kappa_1, \ldots, \kappa_n)$ is at least
	\[\left\{\begin{array}{c l}
	\left \lceil \frac{4(2\kappa_2 + 2\kappa_4 + \cdots + 2\kappa_n)}{3n} \right \rceil & \text{if~$n$ is even}, \\
	\left \lceil\frac{4(2\kappa_2 + 2\kappa_4 + \cdots + 2\kappa_{n-3} + 3\kappa_n)}{3n+1} \right \rceil & \text{if~$n$ is odd}.
	\end{array}\right.\]
\end{lemma}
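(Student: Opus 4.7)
My plan is to construct explicit partitions $p, p'$ of shape $(\kappa_1,\dots,\kappa_n)$ whose clustering difference graph has a rigid structure forcing long resolutions, and then lower-bound the resolution length via a counting argument on $p$-cycle decompositions.

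\textbf{Construction of $p, p'$.} First, I will group the clusters into pairs $(1,2), (3,4), \dots$: if $n$ is even all $n/2$ pairs are used; if $n$ is odd only $(n-3)/2$ pairs are used and the last three clusters $\{n-2, n-1, n\}$ are left over. For each pair $(2i-1, 2i)$, I reassign $\kappa_{2i}$ items of cluster $2i-1$ (under $p$) to cluster $2i$ (under $p'$) and $\kappa_{2i}$ items of cluster $2i$ to cluster $2i-1$; the remaining $\kappa_{2i-1}-\kappa_{2i}$ items of cluster $2i-1$ are fixed. When $n$ is odd, on the leftover clusters I build a directed 3-cycle of multiplicity $\kappa_n$ by moving $\kappa_n$ items from cluster $n-2$ to $n-1$, $\kappa_n$ items from $n-1$ to $n$, and $\kappa_n$ items from $n$ to $n-2$ (feasible since $\kappa_n \leq \kappa_{n-1} \leq \kappa_{n-2}$). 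The resulting $\CDG(p,p')$ is a vertex-disjoint union of 2-cycles (and, in the odd case, a single 3-cycle) with the stated multiplicities; its number of non-loop edges equals the numerator of the claimed bound, namely $E = 2(\kappa_2 + \cdots + \kappa_n)$ for even $n$ and $E = 2(\kappa_2 + \cdots + \kappa_{n-3}) + 3\kappa_n$ for odd $n$.

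\textbf{Lower bound on resolution length.} Next, by \Cref{prop:diameter reformulations}, to lower-bound the distance between $M_p$ and $M_{p'}$ in $\PP(\kappa_1,\dots,\kappa_n)$ it suffices to lower-bound the length $t$ of any $p$-cycle decomposition $(\sigma_1,\dots,\sigma_t)$ of $\CDG(p,p')$. Setting $\ell_i := |\supp(\sigma_i)|$, each $\sigma_i$ has $\ell_i \leq n$ (being $p$-balanced), so $\sum_i \ell_i \leq nt$. The key inequality I aim to establish is
\[
    \sum_{i=1}^t \ell_i \;\geq\; \tfrac{4}{3}\,E \quad (\text{even }n), \qquad \sum_{i=1}^t \ell_i \;\geq\; \tfrac{4n}{3n+1}\,E \quad (\text{odd }n),
\]
which, combined with $\sum_i \ell_i \leq nt$, yields the claimed lower bound on $t$ after dividing by $n$ and taking ceilings.

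\textbf{Main obstacle.} The heart of the proof will be the inequality above, which asserts that in any $p$-cycle decomposition of this particular $\CDG$ items must move with average multiplicity strictly greater than one. My plan is to interpret each $\sigma_i$ as a directed cycle on its $\ell_i$ distinct cluster-images and track, over all $i$, how many cluster-transitions within each $\sigma_i$ can ``align'' with the 2-cycle (respectively 3-cycle) components of $\CDG(p,p')$. Since the cluster-images of any single $\sigma_i$ form a simple directed cycle, each pair $(2i-1,2i)$ of our construction contributes at most one consecutive transition to that cycle; hence ``cross-pair'' transitions in the intermediate $\CDG$s must be cancelled by later $\sigma_j$'s, forcing items to move multiple times. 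A careful double counting of item-moves against edges -- tracking separately the within-pair and cross-pair transitions in each $\sigma_i$ -- should produce the factor $4/3$ in the even case, while the 3-cycle component accounts for the adjusted denominator $3n+1$ in the odd case. This is where I expect the bulk of the technical work to lie, building on and generalizing the known lower bound $\lceil 4\kappa_1/3\rceil$ of~\cite{borgwardt2013diameter}, which handles the special case $n = 4$, $\kappa_1 = \cdots = \kappa_4$.
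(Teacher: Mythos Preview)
Your construction of $p$ and $p'$ is correct and matches the paper's. The counting approach, however, has a genuine gap: the key inequality $\sum_i \ell_i \geq \tfrac{4}{3}E$ is \emph{false}. Take $n=4$ with $\kappa_1=\kappa_2=\kappa_3=\kappa_4=1$. Then $E=4$, but $\CDG(p,p')$ is resolved by two transpositions (one per 2-cycle), giving $\sum_i \ell_i = 2+2 = 4 < \tfrac{16}{3}$. The lemma's bound here is $\lceil 4/3\rceil = 2$, which is attained---so the result is fine, but your inequality is not the right intermediate statement. The error is in the claim ``each pair $(2i-1,2i)$ contributes at most one consecutive transition'' to a $p$-cycle $\sigma_i$: when $\sigma_i$ is itself a transposition whose two cluster-images form a pair, \emph{both} arcs of the resulting directed 2-cycle are within-pair transitions. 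More generally, short $\sigma_i$'s are more efficient per support element than your $\tfrac{4}{3}$ factor allows.

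The paper sidesteps this by tracking a progress potential $s_k$ (number of items that have left their source cluster plus number that have reached their target) along a \emph{resolution} $(\tau_1,\dots,\tau_d)$, and bounding each increment $s_k-s_{k-1}$ by the absolute constant $\tfrac{3n}{2}$ rather than by something proportional to $\ell_k$. The point is that at most $n$ items move in step $k$ (since $|\supp(\tau_k)|\le n$), and a pigeonhole argument on the $n/2$ pairs shows at most $n/2$ of them can make a ``whole move'' (source to target in one step): having $n/2+1$ whole moves would force $\tau_k$ to traverse both antiparallel arcs of some 2-cycle \emph{and} at least one further arc (here $n\ge 4$ is used), contradicting that $\tau_k$ is a single simple directed cycle on the clusters. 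Summing $s_k-s_{k-1}\le \tfrac{3n}{2}$ over $k$ and using $s_d-s_0=2E$ gives $d \ge \tfrac{4E}{3n}$ directly. The transposition case that breaks your inequality is harmless here precisely because its progress, $4$, is still at most $\tfrac{3n}{2}$ when $n\ge 4$.
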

\begin{proof}
	Consider, at first, an arbitrary CDG~$G = ([n], [m], p, p')$ and a resolution~$(p_0, \ldots, p_d)$ of~$(p,p')$ of length~$d$, encoded by~$(\tau_1,\dots,\tau_d)$. Let~$S$ denote the set of items~$x\in [m]$ such that~$p(x) \neq p'(x)$ for all~$x\in S$ (that is,~$S$ is the set of non-loop edges of~$G$), and for~$0\leq i\leq d$, define
	\[s_i := |\{x \in S : p_i(x) \neq p(x)\}| + |\{x \in S : p_i(x) = p'(x)\}|.\]
    Informally, the values~$s_i$ measure the progress towards~$p'$ made by the first~$i$ cyclic exchanges:  each item~$x$ contributes 1 to the count if~$x$ has ``departed'' its original cluster (i.e.,~$p_i(x)\neq p(x)$), and contributes an additional count if~$x$ has ``arrived'' at its desired cluster (i.e.,~$p_i(x)=p'(x)$).
    Clearly, we have~$s_0=0$, and after the final cyclic exchange, when every item~$x\in S$ has left~$p(x)$ and arrived at~$p'(x)$, we have~$s_d=2|S|$.
	
	If~$n$ is even, take~$G$ to be the digraph consisting of~$n/2$ vertex-disjoint cycles of length 2 with edge multiplicities~$\kappa_2, \kappa_4, \ldots, \kappa_n$, and possibly some loops. Since~$G$ has~$2\kappa_2 + 2\kappa_4 + \cdots + 2\kappa_n$ non-loop edges, we have~$s_d = 4\kappa_2 + 4\kappa_4 + \cdots + 4\kappa_n$. Note that for each~$i\in[d]$,
\begin{align}\label{eq:progress from ith step}
s_i - s_{i-1} 
&\leq |\{x \in S : p(x) = p_{i-1}(x) \neq p_{i}(x)\}| + |\{x \in S : p_{i-1}(x) \neq p_{i}(x) = p'(x)\}|.
\end{align}
Informally, the former term on the right-hand side counts the number of items~$x$ that leave~$p(x)$ precisely with the~$i$-th cyclic exchange; the latter counts the number of objects~$x$ that arrive at~$p'(x)$ precisely with the~$i$-th cyclic exchange. We estimate the expression on the right-hand side of \eqref{eq:progress from ith step} as follows. Since 
$|\supp(\tau_i)|\leq n$ (because~$\tau_i$ is a~$p_{i-1}$-cycle), at most~$n$ objects contribute at least~$1$.
Moreover, since~$x$ can only contribute~$2$ if the~$i$-th cyclic exchange moves~$x$ from~$p(x)$ to~$p'(x)$, at most~$n/2$ objects can contribute~$2$. Indeed, if we had at least~$n/2 + 1$ objects each contributing~$2$, then, by the Pigeonhole Principle and since~$n\geq 4$, the support of~$\tau_i$ must contain both edges of some cycle of length two and at least one other edge in~$G$, which is impossible because~$\tau_i$ is a~$p_{i-1}$-cycle. This means that~$s_i - s_{i-1} \leq 3n/2$. Altogether, this gives
	\[4\kappa_2 + 4\kappa_4 + \cdots + 4\kappa_n = \sum_{i=1}^d (s_i - s_{i-1}) \leq \frac{3n d}{2},\]
	so at least~$\displaystyle{\left\lceil \frac{4(2\kappa_2 + 2\kappa_4 + \cdots + 2\kappa_n)}{3n} \right \rceil}$ cyclic exchanges are required to resolve~$G$.
	
	If~$n$ is odd, take~$G$ to be the digraph consisting of~$(n-3)/2$ vertex-disjoint cycles of length 2 with edge multiplicities~$\kappa_2, \kappa_4, \ldots, \kappa_{n-3}$, one cycle of length 3 with edge multiplicity~$\kappa_n$ that is vertex-disjoint from the cycles of length 2, and possibly some loops. See Figure \ref{fig:lowerbound} for an example with~$n=11$,~$\kappa_2=\kappa_4=3$, and~$\kappa_6=\kappa_8=\kappa_{11}=2$. Following the same argument as in the even case, note that~$G$ has~$2\kappa_2 + 2\kappa_4 + \cdots + 2\kappa_{n-3} + 3\kappa_n$ non-loop edges. Again, \eqref{eq:progress from ith step} holds for each~$i \in [d]$. In this case, at most~$n$ objects contribute at least~$1$ to the right-hand side, and at most~$(n+1)/2$ objects contribute~$2$, so~$s_i - s_{i-1} \leq \frac{3n+1}{2}$. This gives
	\[4\kappa_2 + 4\kappa_4 + \cdots + 4\kappa_{n-3} + 6\kappa_n \leq \frac{(3n+1)d}{2},\]
    so at least~$\left \lceil\frac{4(2\kappa_2 + 2\kappa_4 + \cdots + 2\kappa_{n-3} + 3\kappa_n)}{3n+1} \right \rceil$ cyclic exchanges are required to resolve~$G$.
\end{proof}

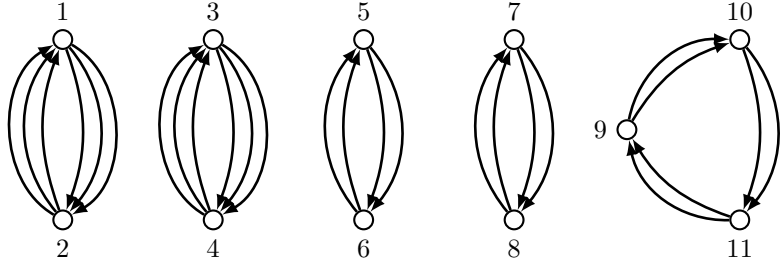
\begin{figure}
\centering
\begin{tikzpicture}[vertices/.style={draw, fill=black, circle, inner sep=0pt, minimum size = 4pt, outer sep=0pt}]
\node[knode,minimum height=0.25cm,label=above:{1}] (c_1) at (0, 2.4)   {};
\node[knode,minimum height=0.25cm,label=below:{2}] (c_2) at (0, 0)     {};
\node[knode,minimum height=0.25cm,label=above:{3}] (c_3) at (2, 2.4)   {};
\node[knode,minimum height=0.25cm,label=below:{4}] (c_4) at (2, 0)     {};
\node[knode,minimum height=0.25cm,label=above:{5}] (c_5) at (4, 2.4)   {};
\node[knode,minimum height=0.25cm,label=below:{6}] (c_6) at (4, 0)     {};
\node[knode,minimum height=0.25cm,label=above:{7}] (c_7) at (6, 2.4)   {};
\node[knode,minimum height=0.25cm,label=below:{8}] (c_8) at (6, 0)     {};
\node[knode,minimum height=0.25cm, label=left:{9}]  (c_9) at (7.5, 1.2){};
\node[knode,minimum height=0.25cm,label=above:{10}] (c_10) at (9, 2.4) {};
\node[knode,minimum height=0.25cm,label=below:{11}] (c_11) at (9, 0)   {};

\foreach \to/\from in {
	c_2/c_1, c_4/c_3, c_6/c_5, c_8/c_7}
\path[draw=black, line width= 1,  ->, >=latex]  (\to) edge[bend left=20] (\from);

\foreach \to/\from in {
	c_2/c_1, c_4/c_3, c_6/c_5, c_8/c_7}
\path[draw=black, line width= 1,  ->, >=latex]  (\to) edge[bend left=40] (\from);

\foreach \to/\from in {
	c_2/c_1, c_4/c_3}
\path[draw=black, line width= 1,  ->, >=latex]  (\to) edge[bend left=60] (\from);

\foreach \to/\from in {
	c_1/c_2, c_3/c_4, c_5/c_6, c_7/c_8,c_9/c_10,c_10/c_11,c_11/c_9}
\path[draw=black, line width= 1,  ->, >=latex]  (\to) edge[bend left=20] (\from);

\foreach \to/\from in {
	c_1/c_2, c_3/c_4, c_5/c_6, c_7/c_8,c_9/c_10,c_10/c_11,c_11/c_9}
\path[draw=black, line width= 1,  ->, >=latex]  (\to) edge[bend left=40] (\from);

\foreach \to/\from in {
	c_1/c_2, c_3/c_4}
\path[draw=black, line width= 1,  ->, >=latex]  (\to) edge[bend left=60] (\from);

\end{tikzpicture}
\caption{A CDG for $n$ odd, as constructed in the proof of Lemma \ref{lem:disjoint 2-cycles lower bound}.}\label{fig:lowerbound}
\end{figure}

Next, we exhibit that the bound from Lemma \ref{lem:disjoint 2-cycles lower bound} is not tight in general. The construction we consider is a special instance of the one given in \Cref{lem:disjoint 2-cycles lower bound} (with $n=6$ and $\kappa_1 = \cdots = \kappa_6 = 3$), but we tighten our analysis in order to give exact diameter $5$ in this case.

\begin{lemma}\label{lem:diamof33pp}
	The diameter of the partition polytope $\PP(3, 3, 3, 3, 3, 3)$ is $5$.
\end{lemma}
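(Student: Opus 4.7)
The upper bound is immediate from \Cref{thm:improved upper bound}: $\kappa_1+\lceil\kappa_2/2\rceil = 3+2 = 5$. For the matching lower bound, the plan is to take $(p,p')$ so that $\CDG(p,p')$ is the vertex-disjoint union of three $2$-cycles of edge-multiplicity $3$ --- the natural hard instance from \Cref{lem:disjoint 2-cycles lower bound} --- label the three pairs of clusters $A,B,C$, and show no length-$4$ resolution $(p_0,\dots,p_4)$ encoded by $(\tau_1,\dots,\tau_4)$ exists.

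I would first rerun the progress-counting argument of \Cref{lem:disjoint 2-cycles lower bound}: with $s_0=0$ and $s_4 = 2\cdot 18 = 36$, and $s_i-s_{i-1}\le 9$ at every step, equality must hold at every step. A brief case analysis of the inequalities in that proof shows that $s_i-s_{i-1}=9$ forces $\tau_i$ to be a $6$-cycle visiting all six clusters, with three within-pair edges alternating with three between-pair edges. In particular the three between-pair edges cyclically permute the three pairs in one of two orientations, positive ($A\to B\to C\to A$) or negative ($A\to C\to B\to A$); call each step positive or negative accordingly. Next I would run an edge-counting step: for each ordered pair $X\neq Y$ in $\{A,B,C\}$, every item that traverses some $X\to Y$ pair-edge also traverses some $Y\to X$ pair-edge (the departure and arrival legs of its two between-pair moves), so the total number of $A\to B$ moves across the four steps (which only occur in positive steps) equals the total number of $B\to A$ moves (which only occur in negative steps). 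Hence exactly two of the four steps are positive and exactly two are negative.

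The crux, and what I expect to be the main obstacle, is to derive a contradiction from any such balanced schedule by focusing on the three items $x_1,x_2,x_3$ that take their first between-pair move in step~$1$. Writing step~$1$'s cycle as $(p_1,p_2,q_1,q_2,r_1,r_2)$ and assuming step~$1$ is positive, these items travel $p_2\to q_1$, $q_2\to r_1$, $r_2\to p_1$; their eventual arrivals at target are the reverse pair-direction moves $q_1\to p_1$, $r_1\to q_1$, $p_1\to r_1$, which can only occur in negative steps. Together these three arrival moves form a directed $3$-cycle on the three clusters $\{p_1,q_1,r_1\}$, with each of these clusters playing both a source and a destination role among the arrivals. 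However, in any single step the two clusters of each pair play \emph{distinct} roles in that step's three between-pair edges --- one is the pair's unique outgoing cluster and the other its unique incoming cluster. Requiring any cluster to be the source of one arrival and the destination of another arrival in the same step forces that pair's outgoing and incoming clusters to coincide, which is impossible. Hence no two of the three arrivals $q_1\to p_1$, $r_1\to q_1$, $p_1\to r_1$ can occur in the same step, and since only two of the three remaining steps are negative, at most two of $x_1,x_2,x_3$ can reach target by step~$4$. This contradicts the fact that all three must arrive, giving diameter $\geq 5$ and completing the proof.
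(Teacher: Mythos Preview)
Your argument is correct and takes a genuinely different route from the paper's proof. Both proofs share the opening: set up the three multiplicity-$3$ digons, deduce from the progress count that each $\tau_k$ is a $6$-cycle making three whole (within-pair) moves and three half (between-pair) moves, with one whole move per pair. From there the paper proceeds by a direct case analysis on $\tau_2$ (whether it returns one of the step-$1$ half-moved items to target or instead makes three fresh whole moves), and within each case pushes the analysis to $\tau_3$ until the remaining CDG is seen not to admit a $2$-step resolution.

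Your proof replaces this case analysis with two clean invariants. First, the observation that the three between-pair edges in any step induce a cyclic orientation on the three pairs, so each step is ``positive'' or ``negative''; pairing each item's departure with its arrival shows $\#\{X\to Y\text{ moves}\}=\#\{Y\to X\text{ moves}\}$ for every ordered pair $(X,Y)$, forcing exactly two steps of each sign. Second, the three step-$1$ departures land at $q_1,r_1,p_1$ and must return via the three negative-direction moves $q_1\to p_1$, $r_1\to q_1$, $p_1\to r_1$; since in any single step each cluster is exclusively a between-pair source or a between-pair sink, no two of these three arrivals can share a step. Three arrivals in two negative steps is then impossible. This is more conceptual than the paper's proof and avoids tracking individual items through Cases 1, 2a, 2b; the price is that one must verify the structural claims (alternation of whole/half moves, the source/sink dichotomy) carefully, but you have identified exactly the right facts and they all hold.
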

\begin{proof}
    Theorem~\ref{thm:improved upper bound} and Lemma~\ref{lem:disjoint 2-cycles lower bound} imply that the diameter of $\PP(3, 3, 3, 3, 3, 3)$ is at most 5 and at least 4 respectively.
    Suppose to the contrary that the diameter of $\PP(3, 3, 3, 3, 3, 3)$ is $4$.

    Similarly as in the proof of Lemma~\ref{lem:disjoint 2-cycles lower bound}, we will consider a CDG consisting of three vertex-disjoint cycles of length 2 each with edge multiplicity $3$. 
	Let $n = 6$ and $m = 18$. Write $[m] = \{a_i^j : 1 \leq i,j \leq 3\} \cup \{b_i^j : 1 \leq i,j \leq 3\}$, and write $[n] = \{A_i : 1 \leq i \leq 3\} \cup \{B_i : 1 \leq i \leq 3\}$. Let $p : [m] \to [n]$ be given by $p(a_i^j) = B_i$ and $p(b_i^j) = A_i$ for all $1 \leq i,j \leq 3$, and let $p' : [m] \to [n]$ be given by $p'(a_i^j) = A_i$ and $p(b_i^j) = B_i$ for all $1 \leq i,j \leq 3$. Define $G=([n],[m],p,p')$. 
    
    By the assumption that $\PP(3, 3, 3, 3, 3, 3)$ has diameter 4, there exists a resolution $(p_0, \ldots, p_4)$ of $(p,p')$, encoded by $(\tau_1,\tau_2,\tau_3,\tau_4)$. Note that $S := [n]$ is the set of non-loop edges of $G$. For $0 \leq k \leq 4$, define $s_k$ as in the proof of Lemma~\ref{lem:disjoint 2-cycles lower bound}. Then $s_0 = 0$, $s_4 = 36$, and for each $1 \leq k \leq 4$, we have 
    \begin{align}\label{eqn:moves}
s_k - s_{k-1} 
&\leq |\{x \in S : p(x) = p_{k-1}(x) \neq p_{k}(x)\}| + |\{x \in S : p_{k-1}(x) \neq p_{k}(x) = p'(x)\}|.
\end{align}
    As before, there are at most $|\supp(\tau_k)|\leq 6$ items $x\in\supp(\tau_k)$ that contribute at least 1 to the right-hand side, and at most 3 of those items contribute 2 because $G$ consists of three vertex-disjoint cycles of length 2.
    Hence
    $s_k - s_{k-1} \leq 9$ for each $1 \leq k \leq 4$, and since $s_0=0$ and $s_4=36$, we have $s_k - s_{k-1} = 9$ for each $1 \leq k \leq 4$. 

    To distinguish these contributions, let us say for $1 \leq k \leq 4$ that the cyclic exchange $\tau_k$ \emph{moves} an item $x\in\supp(\tau_k)$ from the cluster $p_{k-1}(x)$ to the cluster $p_k(x)$. A \emph{whole move} is a move of an item $x\in\supp(\tau_k)$ from $p(x)$ to $p'(x)$ (i.e.,~$p_{k-1}(x)=p(x)$ and $p_k(x)=p'(x)$). Note that whole moves are exactly the moves that contribute 2 to the right-hand side of \eqref{eqn:moves}. A \emph{half move} is a move of an item $x\in\supp(\tau_k)$ such that either $p_{k-1}(x)=p(x)$ or $p_k(x)=p'(x)$, but not both. Half moves are exactly the moves that contribute 1 to the right-hand side of \eqref{eqn:moves}. In general, there could be other kinds of moves (e.g.~$p_{k-1}(x)\neq p(x)$ and $p_k(x)\neq p'(x)$) but they do not contribute to the right-hand side of \eqref{eqn:moves}. 
    
    Since $\tau_k$ makes at most 6 moves and at most 3 of those are whole moves, the tightness of the inequality $s_k-s_{k-1}\leq 9$ implies that
	\begin{enumerate}[label={(\roman*)}]
		\item \label{item:move1}
        for each $1 \leq k \leq 4$, $\tau_k$ makes exactly six moves: three whole moves and the three half moves. 
        \end{enumerate}
    For $1 \leq k \leq 4$ and $1 \leq i \leq 3$, $\tau_k$ does not make two moves between $A_i$ and $B_i$ because this would form a cycle of length 2, whereas $\tau_k$ is a cycle of length 6 by \ref{item:move1}. Since the only whole moves in $G$ are between $A_i$ and $B_i$ for some $1 \leq i \leq 3$ and each $\tau_k$ makes three whole moves by \ref{item:move1}, we have that 
	\begin{enumerate}[label={(\roman*)}]
	\setcounter{enumi}{1}
        \item \label{item:move2}
        for each $1 \leq k \leq 4$ and $1 \leq i \leq 3$, $\tau_k$ makes one whole move between $A_i$ and $B_i$ (either from $A_i$ to $B_i$ or from $B_i$ to $A_i$). 
        \end{enumerate}
    Furthermore, since each $\tau_k$ makes only whole moves and half moves by \ref{item:move1}, we have the following:
	\begin{enumerate}[label={(\roman*)}]
	\setcounter{enumi}{2}
		\item If an item $x$ is ever moved to $p'(x)$, then it may never be moved again.\label{item:move3}
		\item If an item $x$ is ever moved to a cluster that is not $p'(x)$, then the next cyclic exchange that moves $x$ must take it to $p'(x)$. \label{item:move4}
	\end{enumerate}
	
    By \ref{item:move1} and \ref{item:move2}, we may assume by symmetry that $\tau_1 = \left(a_1^1 \: b_1^1 \: a_2^1 \: b_2^1 \: a_3^1 \: b_3^1\right)$. We display $\tau_1$ and the resulting $p_1$ below. 
	
	\begin{center}
    \resizebox{.45\linewidth}{!}{
		\begin{tikzpicture}
			\node (A1) at (-2,2.5) {$A_1$};
			\node (A2) at (1,2.5) {$A_2$};
			\node (A3) at (4,2.5) {$A_3$};
			\node (B1) at (-2,0) {$B_1$};
			\node (B2) at (1,0) {$B_2$};
			\node (B3) at (4,0) {$B_3$};
			
			\node (a11) at (-2 - 0.6,2.5 - 0.4) {$b_1^1$};
			\node (a12) at (-2 - 0.0,2.5 - 0.4) {$b_1^2$};
			\node (a13) at (-2 + 0.6,2.5 - 0.4) {$b_1^3$};
			
			\node (a21) at (1 - 0.6,2.5 - 0.4) {$b_2^1$};
			\node (a22) at (1 - 0.0,2.5 - 0.4) {$b_2^2$};
			\node (a23) at (1 + 0.6,2.5 - 0.4) {$b_2^3$};
			
			\node (a31) at (4 - 0.6,2.5 - 0.4) {$b_3^1$};
			\node (a32) at (4 - 0.0,2.5 - 0.4) {$b_3^2$};
			\node (a33) at (4 + 0.6,2.5 - 0.4) {$b_3^3$};
			
			\node (b11) at (-2 - 0.6,0.0 - 0.4) {$a_1^1$};
			\node (b12) at (-2 - 0.0,0.0 - 0.4) {$a_1^2$};
			\node (b13) at (-2 + 0.6,0.0 - 0.4) {$a_1^3$};
			
			\node (b21) at (1 - 0.6,0.0 - 0.4) {$a_2^1$};
			\node (b22) at (1 - 0.0,0.0 - 0.4) {$a_2^2$};
			\node (b23) at (1 + 0.6,0.0 - 0.4) {$a_2^3$};
			
			\node (b31) at (4 - 0.6,0.0 - 0.4) {$a_3^1$};
			\node (b32) at (4 - 0.0,0.0 - 0.4) {$a_3^2$};
			\node (b33) at (4 + 0.6,0.0 - 0.4) {$a_3^3$};
			
			\draw[->] (b11)--(a11);
			\draw[->] (a11)--(b21);
			\draw[->] (b21)--(a21);
			\draw[->] (a21)--(b31);
			\draw[->] (b31)--(a31);
			\draw[->] (a31) to [out = -150, in = 60] (b11);
			\node at (1,-1) {$p$ and the $p$-cycle $\tau_1$};
		\end{tikzpicture}
        }
        \hfill
        \resizebox{.45\linewidth}{!}{
		\begin{tikzpicture}
			\node (A1) at (-2,2.5) {$A_1$};
			\node (A2) at (1,2.5) {$A_2$};
			\node (A3) at (4,2.5) {$A_3$};
			\node (B1) at (-2,0) {$B_1$};
			\node (B2) at (1,0) {$B_2$};
			\node (B3) at (4,0) {$B_3$};
			
			\node (a11) at (-2 - 0.6,2.5 - 0.4) {$a_1^1$};
			\node (a12) at (-2 - 0.0,2.5 - 0.4) {$b_1^2$};
			\node (a13) at (-2 + 0.6,2.5 - 0.4) {$b_1^3$};
			
			\node (a21) at (1 - 0.6,2.5 - 0.4) {$a_2^1$};
			\node (a22) at (1 - 0.0,2.5 - 0.4) {$b_2^2$};
			\node (a23) at (1 + 0.6,2.5 - 0.4) {$b_2^3$};
			
			\node (a31) at (4 - 0.6,2.5 - 0.4) {$a_3^1$};
			\node (a32) at (4 - 0.0,2.5 - 0.4) {$b_3^2$};
			\node (a33) at (4 + 0.6,2.5 - 0.4) {$b_3^3$};
			
			\node (b11) at (-2 - 0.6,0.0 - 0.4) {$b_3^1$};
			\node (b12) at (-2 - 0.0,0.0 - 0.4) {$a_1^2$};
			\node (b13) at (-2 + 0.6,0.0 - 0.4) {$a_1^3$};
			
			\node (b21) at (1 - 0.6,0.0 - 0.4) {$b_1^1$};
			\node (b22) at (1 - 0.0,0.0 - 0.4) {$a_2^2$};
			\node (b23) at (1 + 0.6,0.0 - 0.4) {$a_2^3$};
			
			\node (b31) at (4 - 0.6,0.0 - 0.4) {$b_2^1$};
			\node (b32) at (4 - 0.0,0.0 - 0.4) {$a_3^2$};
			\node (b33) at (4 + 0.6,0.0 - 0.4) {$a_3^3$};
			\node at (1,-1) {the result $p_1 = p\tau_1$};
		\end{tikzpicture}
        }
	\end{center}
	So far, for each $1 \leq i \leq 3$, none of $b_i^1,b_i^2,b_i^3$ are in $B_i$. Since there are three cyclic exchanges remaining, $\tau_2,\tau_3$, and $\tau_4$ must each move one of $b_i^1,b_i^2,b_i^3$ to $B_i$ for all $1 \leq i \leq 3$.
    Since $b_i^1\in B_{i+1}$ (indexing modulo $3$) and $b_i^2,b_i^3 \in A_i$, the second cyclic exchange $\tau_2$ either makes a half move on $b_i^1$ from $B_{i+1}$ to $B_i$ for some $1 \leq i \leq 3$, or makes three whole moves from $A_i$ to $B_i$ for all $1 \leq i \leq 3$.     
    We consider these two cases separately.

    \begin{description}
	\item[Case 1:] {\bf $\tau_2$ moves $b_i^1$ from $B_{i+1}$ to $B_i$ for some $1 \leq i \leq 3$ (indexing is modulo $3$).}

    Assume without loss of generality that $\tau_2$ moves $b_1^1$ from $B_2$ to $B_1$. This is a half move from $B_2$ to $B_1$, so by \ref{item:move2}, $\tau_2$ makes a whole move from $B_1$ to $A_1$ and a whole move from $A_2$ to $B_2$. Let us assume without loss of generality that $\tau_2$ moves $a_1^2$ from $B_1$ to $A_1$ and moves $b_2^2$ from $A_2$ to $B_2$. 
    
    Since $b_3^1$ is in $B_1$ and $\tau_2$ makes a move from $B_1$ to $A_1$, it does not move $b_3^1$ to $B_3$. Since $\tau_2$ must move one of $b_3^1,b_3^2,b_3^3$ to $B_3$ and $b_3^2,b_3^3\in A_3$, we may assume without loss of generality that $\tau_2$ moves $b_3^2$ from $A_3$ to $B_3$. 

    To complete a cycle of length 6, $\tau_2$ must make a move from $A_1$ to $A_3$ and a move from $B_3$ to $A_2$. By \ref{item:move3}, the item moved from $A_1$ to $A_3$ cannot be $a_1^1$ because $p'(a_1^1)=A_1$, so we may assume by symmetry that $b_1^2$ is moved from $A_1$ to $A_3$. By \ref{item:move4}, the next move that moves $b_2^1$ must take it to $p'(b_2^1)=B_2$, so we may assume by symmetry that $a_3^2$ is moved from $B_3$ to $A_2$.
    Altogether, this gives $\tau_2 = \left(b_1^1 \: a_1^2 \: b_1^2 \: b_3^2 \: a_3^2 \: b_2^2\right)$.
	
	\begin{center}
    \resizebox{.45\linewidth}{!}{
		\begin{tikzpicture}
			\node (A1) at (-2,2.5) {$A_1$};
			\node (A2) at (1,2.5) {$A_2$};
			\node (A3) at (4,2.5) {$A_3$};
			\node (B1) at (-2,0) {$B_1$};
			\node (B2) at (1,0) {$B_2$};
			\node (B3) at (4,0) {$B_3$};
			
			\node (a11) at (-2 - 0.6,2.5 - 0.4) {$a_1^1$};
			\node (a12) at (-2 - 0.0,2.5 - 0.4) {$b_1^2$};
			\node (a13) at (-2 + 0.6,2.5 - 0.4) {$b_1^3$};
			
			\node (a21) at (1 - 0.6,2.5 - 0.4) {$a_2^1$};
			\node (a22) at (1 - 0.0,2.5 - 0.4) {$b_2^2$};
			\node (a23) at (1 + 0.6,2.5 - 0.4) {$b_2^3$};
			
			\node (a31) at (4 - 0.6,2.5 - 0.4) {$a_3^1$};
			\node (a32) at (4 - 0.0,2.5 - 0.4) {$b_3^2$};
			\node (a33) at (4 + 0.6,2.5 - 0.4) {$b_3^3$};
			
			\node (b11) at (-2 - 0.6,0.0 - 0.4) {$b_3^1$};
			\node (b12) at (-2 - 0.0,0.0 - 0.4) {$a_1^2$};
			\node (b13) at (-2 + 0.6,0.0 - 0.4) {$a_1^3$};
			
			\node (b21) at (1 - 0.6,0.0 - 0.4) {$b_1^1$};
			\node (b22) at (1 - 0.0,0.0 - 0.4) {$a_2^2$};
			\node (b23) at (1 + 0.6,0.0 - 0.4) {$a_2^3$};
			
			\node (b31) at (4 - 0.6,0.0 - 0.4) {$b_2^1$};
			\node (b32) at (4 - 0.0,0.0 - 0.4) {$a_3^2$};
			\node (b33) at (4 + 0.6,0.0 - 0.4) {$a_3^3$};
			
			\draw[->] (b21) to [out=150, in = 30] (b12);
			\draw[->] (a22) to [out=-135, in = 90] (b21);
			\draw[->] (b12) to [out=135, in = -90] (a12);
			\draw[->] (a32) to [out=-45, in = 45] (b32);
			\draw[->] (a12) to [out=-30, in = -150] (a32);
			\draw[->] (b32) to [out=135, in=-90] (a22);
			
			\node at (1,-1) {Case 1: $p_1$ and the $p_1$-cycle $\tau_2$};
		\end{tikzpicture}
        } 
        \hfill
    \resizebox{.45\linewidth}{!}{		
		\begin{tikzpicture}
			\node (A1) at (-2,2.5) {$A_1$};
			\node (A2) at (1,2.5) {$A_2$};
			\node (A3) at (4,2.5) {$A_3$};
			\node (B1) at (-2,0) {$B_1$};
			\node (B2) at (1,0) {$B_2$};
			\node (B3) at (4,0) {$B_3$};
			
			\node (a11) at (-2 - 0.6,2.5 - 0.4) {$a_1^1$};
			\node (a12) at (-2 - 0.0,2.5 - 0.4) {$a_1^2$};
			\node (a13) at (-2 + 0.6,2.5 - 0.4) {$b_1^3$};
			
			\node (a21) at (1 - 0.6,2.5 - 0.4) {$a_2^1$};
			\node (a22) at (1 - 0.0,2.5 - 0.4) {$a_3^2$};
			\node (a23) at (1 + 0.6,2.5 - 0.4) {$b_2^3$};
			
			\node (a31) at (4 - 0.6,2.5 - 0.4) {$a_3^1$};
			\node (a32) at (4 - 0.0,2.5 - 0.4) {$b_1^2$};
			\node (a33) at (4 + 0.6,2.5 - 0.4) {$b_3^3$};
			
			\node (b11) at (-2 - 0.6,0.0 - 0.4) {$b_3^1$};
			\node (b12) at (-2 - 0.0,0.0 - 0.4) {$b_1^1$};
			\node (b13) at (-2 + 0.6,0.0 - 0.4) {$a_1^3$};
			
			\node (b21) at (1 - 0.6,0.0 - 0.4) {$b_2^2$};
			\node (b22) at (1 - 0.0,0.0 - 0.4) {$a_2^2$};
			\node (b23) at (1 + 0.6,0.0 - 0.4) {$a_2^3$};
			
			\node (b31) at (4 - 0.6,0.0 - 0.4) {$b_2^1$};
			\node (b32) at (4 - 0.0,0.0 - 0.4) {$b_3^2$};
			\node (b33) at (4 + 0.6,0.0 - 0.4) {$a_3^3$};
			
			\node at (1,-1) {Case 1: the resulting $p_2$};
		\end{tikzpicture}
        }
	\end{center}

	Now we consider the third cyclic exchange $\tau_3$. Note that $\tau_3$ is the second last cyclic exchange. Since both $a_2^2$ and $a_2^3$ are in $B_2$ and need to be moved to $A_2$, $\tau_3$ must make a whole move on one of them; we may assume that $\tau_3$ moves $a_2^2$ from $B_2$ to $A_2$. 
    Since both $b_2^1$ and $b_2^3$ need to be moved to $B_2$, one of these must be moved to $B_2$ by $\tau_3$. 
    This cannot be $b_2^3$ from $A_2$, or we would have a 2-cycle, so $\tau_3$ moves $b_2^1$ from $B_3$ to $B_2$. By \ref{item:move2}, $\tau_3$ makes a whole move from $A_3$ to $B_3$; the only such move is $b_3^3$ from $A_3$ to $B_3$. 

    Since both $a_3^2$ and $a_3^3$ need to be moved to $A_3$, $\tau_3$ must move one of these to $A_3$. But $a_3^3$ is in $B_3$ and $\tau_3$ makes a move from $B_3$ to $B_2$, so it cannot move $a_3^3$. Moreover, $a_3^2$ is in $A_2$ and moving it to $A_3$ would create a cycle of length 4, contradicting \ref{item:move1}. This concludes Case 1.
	
		\begin{center}
		\begin{tikzpicture}
			\node (A1) at (-2,2.5) {$A_1$};
			\node (A2) at (1,2.5) {$A_2$};
			\node (A3) at (4,2.5) {$\mathbf{A_3}$};
			\node (B1) at (-2,0) {$B_1$};
			\node (B2) at (1,0) {$B_2$};
			\node (B3) at (4,0) {$B_3$};
			
			\node (a11) at (-2 - 0.6,2.5 - 0.4) {$a_1^1$};
			\node (a12) at (-2 - 0.0,2.5 - 0.4) {$a_1^2$};
			\node (a13) at (-2 + 0.6,2.5 - 0.4) {$b_1^3$};
			
			\node (a21) at (1 - 0.6,2.5 - 0.4) {$a_2^1$};
			\node (a22) at (1 - 0.0,2.5 - 0.4) {$\mathbf{a_3^2}$};
			\node (a23) at (1 + 0.6,2.5 - 0.4) {$b_2^3$};
			
			\node (a31) at (4 - 0.6,2.5 - 0.4) {$a_3^1$};
			\node (a32) at (4 - 0.0,2.5 - 0.4) {$b_1^2$};
			\node (a33) at (4 + 0.6,2.5 - 0.4) {$b_3^3$};
			
			\node (b11) at (-2 - 0.6,0.0 - 0.4) {$b_3^1$};
			\node (b12) at (-2 - 0.0,0.0 - 0.4) {$b_1^1$};
			\node (b13) at (-2 + 0.6,0.0 - 0.4) {$a_1^3$};
			
			\node (b21) at (1 - 0.6,0.0 - 0.4) {$b_2^2$};
			\node (b22) at (1 - 0.0,0.0 - 0.4) {$a_2^2$};
			\node (b23) at (1 + 0.6,0.0 - 0.4) {$a_2^3$};
			
			\node (b31) at (4 - 0.6,0.0 - 0.4) {$b_2^1$};
			\node (b32) at (4 - 0.0,0.0 - 0.4) {$b_3^2$};
			\node (b33) at (4 + 0.6,0.0 - 0.4) {$\mathbf{a_3^3}$};
			
			\draw[->] (b22) to [out=135, in = -90] (a22);
			\draw[->] (b31) to [out=-150, in = -30] (b22);
			\draw[->] (a33) to [out=-135, in = 90] (b31);
			\draw[->][dotted] (a22) to [out=-30, in=-150] (a33);
			\draw[->][dotted] (b33) to (a33);
			
			\node at (1,-1.5) {Case 1: a contradiction trying to determine the $p_2$-cycle $\tau_3$};
		\end{tikzpicture}
	\end{center}
	
	\item[Case 2:] {\bf $\tau_2$ makes three whole moves from $A_i$ to $B_i$ for each $1 \leq i \leq 3$.}
    
    Assume without loss of generality that $\tau_2$ moves $b_i^2$ from $A_i$ to $B_i$ for each $1 \leq i \leq 3$. Up to symmetry and by \ref{item:move4}, there are exactly two ways that this can be completed to a $6$-cycle. Either we have $\tau_2 = \left(b_1^2 \: a_1^2 \: b_2^2 \: a_2^2 \: b_3^2 \: a_3^2\right)$ or $\tau_2 = \left(b_1^2 \: a_1^2 \: b_3^2 \: a_3^2 \: b_2^2 \: a_2^2\right)$, displayed below as Case 2a and Case 2b respectively. 

    	\begin{center}
        \resizebox{.45\linewidth}{!}{
		\begin{tikzpicture}
			\node (A1) at (-2,2.5) {$A_1$};
			\node (A2) at (1,2.5) {$A_2$};
			\node (A3) at (4,2.5) {$A_3$};
			\node (B1) at (-2,0) {$B_1$};
			\node (B2) at (1,0) {$B_2$};
			\node (B3) at (4,0) {$B_3$};
			
			\node (a11) at (-2 - 0.6,2.5 - 0.4) {$a_1^1$};
			\node (a12) at (-2 - 0.0,2.5 - 0.4) {$b_1^2$};
			\node (a13) at (-2 + 0.6,2.5 - 0.4) {$b_1^3$};
			
			\node (a21) at (1 - 0.6,2.5 - 0.4) {$a_2^1$};
			\node (a22) at (1 - 0.0,2.5 - 0.4) {$b_2^2$};
			\node (a23) at (1 + 0.6,2.5 - 0.4) {$b_2^3$};
			
			\node (a31) at (4 - 0.6,2.5 - 0.4) {$a_3^1$};
			\node (a32) at (4 - 0.0,2.5 - 0.4) {$b_3^2$};
			\node (a33) at (4 + 0.6,2.5 - 0.4) {$b_3^3$};
			
			\node (b11) at (-2 - 0.6,0.0 - 0.4) {$b_3^1$};
			\node (b12) at (-2 - 0.0,0.0 - 0.4) {$a_1^2$};
			\node (b13) at (-2 + 0.6,0.0 - 0.4) {$a_1^3$};
			
			\node (b21) at (1 - 0.6,0.0 - 0.4) {$b_1^1$};
			\node (b22) at (1 - 0.0,0.0 - 0.4) {$a_2^2$};
			\node (b23) at (1 + 0.6,0.0 - 0.4) {$a_2^3$};
			
			\node (b31) at (4 - 0.6,0.0 - 0.4) {$b_2^1$};
			\node (b32) at (4 - 0.0,0.0 - 0.4) {$a_3^2$};
			\node (b33) at (4 + 0.6,0.0 - 0.4) {$a_3^3$};

			\draw[->] (a12) to [out=-90, in = 135] (b12);
			\draw[->] (b12) to (a22);
			\draw[->] (a22) to [out=-90, in = 135] (b22);
			\draw[->] (b22) to (a32);
			\draw[->] (a32) to [out=-90, in = 135] (b32);
			\draw[->] (b32) to [out=150, in=-60, looseness=0.4] (a12);
			
			\node at (1,-1.5) {Case 2a: $p_1$ and $p_1$-cycle $\tau_2=(b_1^2 \: a_1^2 \: b_2^2 \: a_2^2 \: b_3^2 \: a_3^2)$};

		\end{tikzpicture}
}\hfill 
\resizebox{.45\linewidth}{!}{
        		\begin{tikzpicture}
			\node (A1) at (-2,2.5) {$A_1$};
			\node (A2) at (1,2.5) {$A_2$};
			\node (A3) at (4,2.5) {$A_3$};
			\node (B1) at (-2,0) {$B_1$};
			\node (B2) at (1,0) {$B_2$};
			\node (B3) at (4,0) {$B_3$};
			
			\node (a11) at (-2 - 0.6,2.5 - 0.4) {$a_1^1$};
			\node (a12) at (-2 - 0.0,2.5 - 0.4) {$b_1^2$};
			\node (a13) at (-2 + 0.6,2.5 - 0.4) {$b_1^3$};
			
			\node (a21) at (1 - 0.6,2.5 - 0.4) {$a_2^1$};
			\node (a22) at (1 - 0.0,2.5 - 0.4) {$b_2^2$};
			\node (a23) at (1 + 0.6,2.5 - 0.4) {$b_2^3$};
			
			\node (a31) at (4 - 0.6,2.5 - 0.4) {$a_3^1$};
			\node (a32) at (4 - 0.0,2.5 - 0.4) {$b_3^2$};
			\node (a33) at (4 + 0.6,2.5 - 0.4) {$b_3^3$};
			
			\node (b11) at (-2 - 0.6,0.0 - 0.4) {$b_3^1$};
			\node (b12) at (-2 - 0.0,0.0 - 0.4) {$a_1^2$};
			\node (b13) at (-2 + 0.6,0.0 - 0.4) {$a_1^3$};
			
			\node (b21) at (1 - 0.6,0.0 - 0.4) {$b_1^1$};
			\node (b22) at (1 - 0.0,0.0 - 0.4) {$a_2^2$};
			\node (b23) at (1 + 0.6,0.0 - 0.4) {$a_2^3$};
			
			\node (b31) at (4 - 0.6,0.0 - 0.4) {$b_2^1$};
			\node (b32) at (4 - 0.0,0.0 - 0.4) {$a_3^2$};
			\node (b33) at (4 + 0.6,0.0 - 0.4) {$a_3^3$};

			\draw[->] (a12) to [out=-90, in = 135] (b12);
			\draw[->] (b12) to [out=30, in=-120, looseness=0.4](a32);
			\draw[->] (a32) to [out=-90, in = 135] (b32);
			\draw[->] (b32) to (a22);
			\draw[->] (a22) to [out=-90, in = 135] (b22);
			\draw[->] (b22) to [out=150, in=-45, looseness=0.2] (a12);
			
			\node at (1,-1.5) {Case 2b: $p_1$ and $p_1$-cycle $\tau_2=(b_1^2 \: a_1^2 \: b_3^2 \: a_3^2 \: b_2^2 \: a_2^2)$};

		\end{tikzpicture}
}
	\end{center}

    	\begin{center}
        \resizebox{.45\linewidth}{!}{
		\begin{tikzpicture}
			\node (A1) at (-2,2.5) {$A_1$};
			\node (A2) at (1,2.5) {$A_2$};
			\node (A3) at (4,2.5) {$A_3$};
			\node (B1) at (-2,0) {$B_1$};
			\node (B2) at (1,0) {$B_2$};
			\node (B3) at (4,0) {$B_3$};
			
			\node (a11) at (-2 - 0.6,2.5 - 0.4) {$a_1^1$};
			\node (a12) at (-2 - 0.0,2.5 - 0.4) {$a_3^2$};
			\node (a13) at (-2 + 0.6,2.5 - 0.4) {$b_1^3$};
			
			\node (a21) at (1 - 0.6,2.5 - 0.4) {$a_2^1$};
			\node (a22) at (1 - 0.0,2.5 - 0.4) {$a_1^2$};
			\node (a23) at (1 + 0.6,2.5 - 0.4) {$b_2^3$};
			
			\node (a31) at (4 - 0.6,2.5 - 0.4) {$a_3^1$};
			\node (a32) at (4 - 0.0,2.5 - 0.4) {$a_2^2$};
			\node (a33) at (4 + 0.6,2.5 - 0.4) {$b_3^3$};
			
			\node (b11) at (-2 - 0.6,0.0 - 0.4) {$b_3^1$};
			\node (b12) at (-2 - 0.0,0.0 - 0.4) {$b_1^2$};
			\node (b13) at (-2 + 0.6,0.0 - 0.4) {$a_1^3$};
			
			\node (b21) at (1 - 0.6,0.0 - 0.4) {$b_1^1$};
			\node (b22) at (1 - 0.0,0.0 - 0.4) {$b_2^2$};
			\node (b23) at (1 + 0.6,0.0 - 0.4) {$a_2^3$};
			
			\node (b31) at (4 - 0.6,0.0 - 0.4) {$b_2^1$};
			\node (b32) at (4 - 0.0,0.0 - 0.4) {$b_3^2$};
			\node (b33) at (4 + 0.6,0.0 - 0.4) {$a_3^3$};

			\node at (1,-1.5) {Case 2a: the result $p_2=p_1\tau_2$};

		\end{tikzpicture}

}\hfill \resizebox{.45\linewidth}{!}{
        		\begin{tikzpicture}
			\node (A1) at (-2,2.5) {$A_1$};
			\node (A2) at (1,2.5) {$A_2$};
			\node (A3) at (4,2.5) {$A_3$};
			\node (B1) at (-2,0) {$B_1$};
			\node (B2) at (1,0) {$B_2$};
			\node (B3) at (4,0) {$B_3$};
			
			\node (a11) at (-2 - 0.6,2.5 - 0.4) {$a_1^1$};
			\node (a12) at (-2 - 0.0,2.5 - 0.4) {$a_2^2$};
			\node (a13) at (-2 + 0.6,2.5 - 0.4) {$b_1^3$};
			
			\node (a21) at (1 - 0.6,2.5 - 0.4) {$a_2^1$};
			\node (a22) at (1 - 0.0,2.5 - 0.4) {$a_3^2$};
			\node (a23) at (1 + 0.6,2.5 - 0.4) {$b_2^3$};
			
			\node (a31) at (4 - 0.6,2.5 - 0.4) {$a_3^1$};
			\node (a32) at (4 - 0.0,2.5 - 0.4) {$a_1^2$};
			\node (a33) at (4 + 0.6,2.5 - 0.4) {$b_3^3$};
			
			\node (b11) at (-2 - 0.6,0.0 - 0.4) {$b_3^1$};
			\node (b12) at (-2 - 0.0,0.0 - 0.4) {$b_1^2$};
			\node (b13) at (-2 + 0.6,0.0 - 0.4) {$a_1^3$};
			
			\node (b21) at (1 - 0.6,0.0 - 0.4) {$b_1^1$};
			\node (b22) at (1 - 0.0,0.0 - 0.4) {$b_2^2$};
			\node (b23) at (1 + 0.6,0.0 - 0.4) {$a_2^3$};
			
			\node (b31) at (4 - 0.6,0.0 - 0.4) {$b_2^1$};
			\node (b32) at (4 - 0.0,0.0 - 0.4) {$b_3^2$};
			\node (b33) at (4 + 0.6,0.0 - 0.4) {$a_3^3$};

			\node at (1,-1.5) {Case 2b: the result $p_2=p_1\tau_2$};

		\end{tikzpicture}
}
	\end{center}
    
    The resulting clustering difference graphs CDG$(p_2,p')$ and their simplified depictions omitting item and cluster labels are displayed below.
    
    	\begin{center}
        \resizebox{.45\linewidth}{!}{
		\begin{tikzpicture}
			\node (A1) at (-2,2.5) {$A_1$};
			\node (A2) at (1,2.5) {$A_2$};
			\node (A3) at (4,2.5) {$A_3$};
			\node (B1) at (-2,0) {$B_1$};
			\node (B2) at (1,0) {$B_2$};
			\node (B3) at (4,0) {$B_3$};
			
			\node (a11) at (-2 - 0.6,2.5 - 0.4) {$a_1^1$};
			\node (a12) at (-2 - 0.0,2.5 - 0.4) {$a_3^2$};
			\node (a13) at (-2 + 0.6,2.5 - 0.4) {$b_1^3$};
			
			\node (a21) at (1 - 0.6,2.5 - 0.4) {$a_2^1$};
			\node (a22) at (1 - 0.0,2.5 - 0.4) {$a_1^2$};
			\node (a23) at (1 + 0.6,2.5 - 0.4) {$b_2^3$};
			
			\node (a31) at (4 - 0.6,2.5 - 0.4) {$a_3^1$};
			\node (a32) at (4 - 0.0,2.5 - 0.4) {$a_2^2$};
			\node (a33) at (4 + 0.6,2.5 - 0.4) {$b_3^3$};
			
			\node (b11) at (-2 - 0.6,0.0 - 0.4) {$b_3^1$};
			\node (b12) at (-2 - 0.0,0.0 - 0.4) {$b_1^2$};
			\node (b13) at (-2 + 0.6,0.0 - 0.4) {$a_1^3$};
			
			\node (b21) at (1 - 0.6,0.0 - 0.4) {$b_1^1$};
			\node (b22) at (1 - 0.0,0.0 - 0.4) {$b_2^2$};
			\node (b23) at (1 + 0.6,0.0 - 0.4) {$a_2^3$};
			
			\node (b31) at (4 - 0.6,0.0 - 0.4) {$b_2^1$};
			\node (b32) at (4 - 0.0,0.0 - 0.4) {$b_3^2$};
			\node (b33) at (4 + 0.6,0.0 - 0.4) {$a_3^3$};

            \draw[->] (a12) to [out=-70, in = -110, looseness=0.6] (a32);
            \draw[->] (a13) to [bend right=10] (b13);
            \draw[->] (a22) to [out=-135, in = -45] (a12);
            \draw[->] (a23) to [bend right=10] (b23);
            \draw[->] (a32) to [out=-135, in = -45] (a22);
            \draw[->] (a33) to [bend right=10] (b33);

            \draw[->] (b11) to [out=-50, in = -130, looseness=0.6] (b31);
            \draw[->] (b13) to [bend right=10] (a13);
            \draw[->] (b21) to [out=-150, in = -30] (b11);
            \draw[->] (b23) to [bend right=10] (a23);
            \draw[->] (b31) to [out=-150, in = -30] (b21);
            \draw[->] (b33) to [bend right=10] (a33);

			\node at (1,-2) {Case 2a: CDG$(p_2,p')$};

		\end{tikzpicture}

}\hfill \resizebox{.45\linewidth}{!}{
        		\begin{tikzpicture}
			\node (A1) at (-2,2.5) {$A_1$};
			\node (A2) at (1,2.5) {$A_2$};
			\node (A3) at (4,2.5) {$A_3$};
			\node (B1) at (-2,0) {$B_1$};
			\node (B2) at (1,0) {$B_2$};
			\node (B3) at (4,0) {$B_3$};
			
			\node (a11) at (-2 - 0.6,2.5 - 0.4) {$a_1^1$};
			\node (a12) at (-2 - 0.0,2.5 - 0.4) {$a_2^2$};
			\node (a13) at (-2 + 0.6,2.5 - 0.4) {$b_1^3$};
			
			\node (a21) at (1 - 0.6,2.5 - 0.4) {$a_2^1$};
			\node (a22) at (1 - 0.0,2.5 - 0.4) {$a_3^2$};
			\node (a23) at (1 + 0.6,2.5 - 0.4) {$b_2^3$};
			
			\node (a31) at (4 - 0.6,2.5 - 0.4) {$a_3^1$};
			\node (a32) at (4 - 0.0,2.5 - 0.4) {$a_1^2$};
			\node (a33) at (4 + 0.6,2.5 - 0.4) {$b_3^3$};
			
			\node (b11) at (-2 - 0.6,0.0 - 0.4) {$b_3^1$};
			\node (b12) at (-2 - 0.0,0.0 - 0.4) {$b_1^2$};
			\node (b13) at (-2 + 0.6,0.0 - 0.4) {$a_1^3$};
			
			\node (b21) at (1 - 0.6,0.0 - 0.4) {$b_1^1$};
			\node (b22) at (1 - 0.0,0.0 - 0.4) {$b_2^2$};
			\node (b23) at (1 + 0.6,0.0 - 0.4) {$a_2^3$};
			
			\node (b31) at (4 - 0.6,0.0 - 0.4) {$b_2^1$};
			\node (b32) at (4 - 0.0,0.0 - 0.4) {$b_3^2$};
			\node (b33) at (4 + 0.6,0.0 - 0.4) {$a_3^3$};

            \draw[<-] (a12) to [out=-70, in = -110, looseness=0.6] (a32);
            \draw[->] (a13) to [bend right=10] (b13);
            \draw[<-] (a22) to [out=-135, in = -45] (a12);
            \draw[->] (a23) to [bend right=10] (b23);
            \draw[<-] (a32) to [out=-135, in = -45] (a22);
            \draw[->] (a33) to [bend right=10] (b33);

            \draw[->] (b11) to [out=-50, in = -130, looseness=0.6] (b31);
            \draw[->] (b13) to [bend right=10] (a13);
            \draw[->] (b21) to [out=-150, in = -30] (b11);
            \draw[->] (b23) to [bend right=10] (a23);
            \draw[->] (b31) to [out=-150, in = -30] (b21);
            \draw[->] (b33) to [bend right=10] (a33);
			
			\node at (1,-2) {Case 2b: CDG$(p_2,p')$};

		\end{tikzpicture}
}
	\end{center}

    	\begin{center}
        \resizebox{.45\linewidth}{!}{
		\begin{tikzpicture}
			\node[knode,minimum height=0.25cm] (A1) at (-2,2.5) {};
			\node[knode,minimum height=0.25cm] (A2) at (1,2.5)  {};
			\node[knode,minimum height=0.25cm] (A3) at (4,2.5)  {};
			\node[knode,minimum height=0.25cm] (B1) at (-2,0)   {};
			\node[knode,minimum height=0.25cm] (B2) at (1,0)    {};
			\node[knode,minimum height=0.25cm] (B3) at (4,0)    {};
			
			\node[outer sep=0.05cm] (a11) at (A1) {};
			\node[outer sep=0.05cm] (a12) at (A1) {};
			\node[outer sep=0.05cm] (a13) at (A1) {};
			\node[outer sep=0.05cm] (a21) at (A2) {};
			\node[outer sep=0.05cm] (a22) at (A2) {};
			\node[outer sep=0.05cm] (a23) at (A2) {};			
			\node[outer sep=0.05cm] (a31) at (A3) {};
			\node[outer sep=0.05cm] (a32) at (A3) {};
			\node[outer sep=0.05cm] (a33) at (A3) {};			
			\node[outer sep=0.05cm] (b11) at (B1) {};
			\node[outer sep=0.05cm] (b12) at (B1) {};
			\node[outer sep=0.05cm] (b13) at (B1) {};			
			\node[outer sep=0.05cm] (b21) at (B2) {};
			\node[outer sep=0.05cm] (b22) at (B2) {};
			\node[outer sep=0.05cm] (b23) at (B2) {};			
			\node[outer sep=0.05cm] (b31) at (B3) {};
			\node[outer sep=0.05cm] (b32) at (B3) {};
			\node[outer sep=0.05cm] (b33) at (B3) {};

            \draw[->] (a12) to [bend left, looseness=0.6] (a32);
            \draw[->] (a13) to [bend right=15] (b13);
            \draw[->] (a22) to (a12);
            \draw[->] (a23) to [bend right=15] (b23);
            \draw[->] (a32) to (a22);
            \draw[->] (a33) to [bend right=15] (b33);

            \draw[->] (b11) to [bend right, looseness=0.6] (b31);
            \draw[->] (b13) to [bend right=15] (a13);
            \draw[->] (b21) to (b11);
            \draw[->] (b23) to [bend right=15] (a23);
            \draw[->] (b31) to (b21);
            \draw[->] (b33) to [bend right=15] (a33);

			\node at (1,-1) {\small Case 2a: CDG$(p_2,p')$ without labels};

			\node at (-2 - 0.6,0.0 - 0.4) {};
			\node at (4 + 0.6,0.0 - 0.4) {};
            
		\end{tikzpicture}

}\hfill \resizebox{.45\linewidth}{!}{
        		\begin{tikzpicture}
			\node[knode,minimum height=0.25cm] (A1) at (-2,2.5) {};
			\node[knode,minimum height=0.25cm] (A2) at (1,2.5)  {};
			\node[knode,minimum height=0.25cm] (A3) at (4,2.5)  {};
			\node[knode,minimum height=0.25cm] (B1) at (-2,0)   {};
			\node[knode,minimum height=0.25cm] (B2) at (1,0)    {};
			\node[knode,minimum height=0.25cm] (B3) at (4,0)    {};
			
			\node[outer sep=0.05cm] (a11) at (A1) {};
			\node[outer sep=0.05cm] (a12) at (A1) {};
			\node[outer sep=0.05cm] (a13) at (A1) {};
			\node[outer sep=0.05cm] (a21) at (A2) {};
			\node[outer sep=0.05cm] (a22) at (A2) {};
			\node[outer sep=0.05cm] (a23) at (A2) {};			
			\node[outer sep=0.05cm] (a31) at (A3) {};
			\node[outer sep=0.05cm] (a32) at (A3) {};
			\node[outer sep=0.05cm] (a33) at (A3) {};			
			\node[outer sep=0.05cm] (b11) at (B1) {};
			\node[outer sep=0.05cm] (b12) at (B1) {};
			\node[outer sep=0.05cm] (b13) at (B1) {};			
			\node[outer sep=0.05cm] (b21) at (B2) {};
			\node[outer sep=0.05cm] (b22) at (B2) {};
			\node[outer sep=0.05cm] (b23) at (B2) {};			
			\node[outer sep=0.05cm] (b31) at (B3) {};
			\node[outer sep=0.05cm] (b32) at (B3) {};
			\node[outer sep=0.05cm] (b33) at (B3) {};

            \draw[<-] (a12) to [bend left, looseness=0.6] (a32);
            \draw[->] (a13) to [bend right=15] (b13);
            \draw[<-] (a22) to (a12);
            \draw[->] (a23) to [bend right=15] (b23);
            \draw[<-] (a32) to (a22);
            \draw[->] (a33) to [bend right=15] (b33);

            \draw[->] (b11) to [bend right, looseness=0.6] (b31);
            \draw[->] (b13) to [bend right=15] (a13);
            \draw[->] (b21) to (b11);
            \draw[->] (b23) to [bend right=15] (a23);
            \draw[->] (b31) to (b21);
            \draw[->] (b33) to [bend right=15] (a33);
			
			\node at (1,-1) {\small Case 2b: CDG$(p_2,p')$ without labels};

            \node at (-2 - 0.6,0.0 - 0.4) {};
			\node at (4 + 0.6,0.0 - 0.4) {};

		\end{tikzpicture}
}
	\end{center}

In either case, it is readily verified that CDG$(p_2,p')$ is a digraph on 6 vertices with 12 non-loop edges  that cannot be decomposed into two directed Hamiltonian cycles, hence it does not admit a resolution of length 2. This concludes Case 2, and the proof is complete. 
    \end{description}
\end{proof}

\section{Improved bounds on path and cycle odd-covers}\label{sec:odd}

The graphs in this section are simple and undirected.
Our main goal in this section is to prove the following two statements which make up \Cref{thm:oddcover3}. 
\begin{theorem}\label{thm:sum of three paths}
	Every Eulerian graph~$G$ of maximum degree~$4$ admits a path odd-cover of size at most~$3$.
\end{theorem}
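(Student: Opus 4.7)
The plan is to lift the Akiyama--Exoo--Harary linear forest decomposition (\Cref{lem:3 linear forests from transversal pair}) to a path odd-cover by promoting each of the three linear forests to a single path via canceling connector edges, in the style of \Cref{lem:polycycle two paths or cycles}.

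First, since $G$ is Eulerian with $\Delta(G) = 4$, \Cref{cor:Petersen generalization undirected} decomposes $G$ into two edge-disjoint polycycles $H_1$ and $H_2$. I would choose a transversal pair $(M_1, M_2)$ of matchings for $(H_1, H_2)$, let $M' \subseteq M_2$ be a matching transversal to the cycle components of $M_1 \cup M_2$, and apply \Cref{lem:3 linear forests from transversal pair} to obtain linear forests $F_1 = (H_1 \setminus M_1) \cup M'$, $F_2 = (M_1 \cup M_2) \setminus M'$, and $F_3 = H_2 \setminus M_2$ that decompose $G$.

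Next, for each $i \in \{1,2,3\}$, I would enumerate the path components of $F_i$ as $Q_1^i, \ldots, Q_{k_i}^i$ and introduce $k_i-1$ connector edges joining suitable endpoints of consecutive components so as to merge $F_i$ into a single path $P_i$. The guiding principle, mirroring \Cref{lem:polycycle two paths or cycles}, is that each connector edge must appear in exactly two of $P_1,P_2,P_3$, so that all connectors cancel in the symmetric difference. Once this is achieved we get $P_1 \oplus P_2 \oplus P_3 = F_1 \oplus F_2 \oplus F_3 = G$, so $\{P_1,P_2,P_3\}$ is a path odd-cover of $G$ of size at most $3$.

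The main obstacle is ensuring that each $P_i$ is genuinely a simple path once the connectors are inserted. A connector used to link two components of $F_i$ must simultaneously be legal in some other path $P_j$ without raising any vertex degree above $2$ in $P_j$ or closing a cycle there. Fortunately, the endpoints of components of $F_1$ and $F_3$ are controlled by the matchings $M_1$ and $M_2$, and the components of $F_2$ are alternating sub-paths of $M_1 \cup M_2$, so the available endpoints carry a rigid bipartite-like structure. I expect to route the connectors using a $2$-coloring of an auxiliary graph on these endpoints, analogous to \Cref{lem:disjoint representative vertices}, together with a careful choice of the matchings $M_1,M_2$ and the ordering in which components are glued, handling the few remaining configurations of $M_1 \cup M_2$ by case analysis.
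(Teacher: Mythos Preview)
Your outline follows the paper's overall strategy --- start from the Akiyama--Exoo--Harary linear forest decomposition and glue components of each $F_i$ into a single path using connector edges that appear in exactly two of the $P_i$ --- but it misses the key obstruction that drives the whole argument: parity. Each connector edge you add to $F_i$ and $F_j$ removes exactly two vertices from $R_{ij} = \ennd(F_i)\cap\ennd(F_j)$, so the parities of $r_{12},r_{13},r_{23}$ are invariant under your gluing operation. A path odd-cover has $r_{12}=r_{13}=r_{23}=1$, hence odd parity; thus if your initial transversal pair $(M_1,M_2)$ happens to give $|V(M_1)\cap V(M_2)|$ even (which, by \Cref{lem:linearforestendsparity}, forces $p(\mathcal F)=0$), no amount of ``careful ordering'' or ``case analysis'' on connectors will get you to three paths. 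Your sentence ``together with a careful choice of the matchings $M_1,M_2$'' is therefore doing all the work, and you have not said what that choice must be or why it can always be made.

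The paper fills this gap with \Cref{lem:matchings with odd intersection}: using a flexibility argument on a single cycle of $H_2$ meeting $H_1$ (\Cref{lem:exchange}), one can always select $(M_1,M_2)$ with $|V(M_1)\cap V(M_2)|$ odd, hence $p(\mathcal F)=1$. After that, the paper's gluing step is much cleaner than a case analysis: pick a linear forest odd-cover with $p=1$ minimizing $r_{12}+r_{13}+r_{23}$; if some $r_{ij}\ge 3$, the oddness of $t_i$ guarantees a component of $F_i$ with one end in $R_{ij}$ and the other in $R_{ik}$, which lets you find $u,v\in R_{ij}$ in different components of both $F_i$ and $F_j$, and adding $uv$ to $F_i$ and $F_j$ strictly decreases the sum while preserving $p=1$. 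Your appeal to \Cref{lem:disjoint representative vertices} is a red herring here --- that lemma is used for the partition-polytope argument, not for the path odd-cover.
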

\begin{theorem}\label{thm:sum of three cycles}
	Every Eulerian graph~$G$ of maximum degree~$4$ admits a cycle odd-cover of size at most~$3$.
\end{theorem}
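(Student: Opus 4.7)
The plan is to adapt the framework of \Cref{lem:3 linear forests from transversal pair} to build three cycles whose symmetric difference is $G$, rather than three linear forests. Begin by applying \Cref{cor:Petersen generalization undirected} to decompose $G = H_1 \cup H_2$ into edge-disjoint polycycles, each with maximum degree at most $2$, and let $t_i$ denote the number of cycle components of $H_i$. In the degenerate case $t_1 \leq 1$ or $t_2 \leq 1$, applying \Cref{lem:polycycle two paths or cycles} to $H_1$ and $H_2$ separately already yields a cycle odd-cover of $G$ of size at most $1 + 2 = 3$, so we may assume $t_1, t_2 \geq 2$.

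Choose transversal matchings $M_i \subseteq H_i$ and apply the construction of \Cref{lem:polycycle two paths or cycles} to obtain cycle odd-covers $\{A_1, A_2\}$ of $H_1$ and $\{B_1, B_2\}$ of $H_2$. Here $A_2$ is a cycle of length $2t_1$ on $V(M_1)$ alternating between $M_1$-edges and freely chosen ``connector'' edges in the host graph $K_{V(G)}$, and likewise $B_2$ on $V(M_2)$. Since $A_1 \oplus A_2 \oplus B_1 \oplus B_2 = G$, the four cycles already form an odd-cover of size $4$; the strategy is to choose $M_1, M_2$ and the connectors of $A_2, B_2$ so that the symmetric difference $C := A_2 \oplus B_2$ is a single cycle, giving the desired cycle odd-cover $\{A_1, B_1, C\}$ of size $3$.

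At each vertex $v \in V(M_1) \cap V(M_2)$, both $A_2$ and $B_2$ contribute degree $2$, so in order for $C$ to have degree $2$ at $v$ we need $A_2$ and $B_2$ to share exactly one edge at $v$; thus the shared edges must form a perfect matching on $V(M_1) \cap V(M_2)$. These shared edges necessarily lie in $K_{V(G)} \setminus E(G)$ (since $H_1$ and $H_2$ are edge-disjoint), so we have full freedom to install them as common connectors. Mirroring the role of $M'$ in \Cref{lem:3 linear forests from transversal pair}, we analyze the components of the auxiliary graph $M_1 \cup M_2$ (paths and even cycles of length $\geq 4$) and coordinate the cyclic traversal orders of $A_2$ and $B_2$ so that the required connectors coincide; an additional compatibility condition on the orderings will ensure that the resulting $2$-regular graph $C$ is connected.

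The main obstacle is to handle configurations where no suitable matching of shared edges exists, most notably when $|V(M_1) \cap V(M_2)|$ is odd, or when $V(H_1) \cap V(H_2)$ is too small to support the needed shared connectors (for instance, when $G$ has only a single vertex of degree $4$). In such cases we must either reroute $A_2$ or $B_2$ through vertices in $V(M_1) \oplus V(M_2)$, or instead merge a different pair among $\{A_1, A_2, B_1, B_2\}$. Carrying out this case analysis cleanly---analogous to the argument underlying \Cref{lem:3 linear forests from transversal pair} but with the stronger requirement that each output is a simple cycle rather than just a linear forest---is where the bulk of the technical work lies.
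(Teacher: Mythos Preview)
Your outline is a plausible alternative to the paper's approach, but as written it is a plan rather than a proof: you correctly isolate the two obstructions (parity of $|V(M_1)\cap V(M_2)|$ and connectedness of $C=A_2\oplus B_2$), and then explicitly defer them. These are precisely the difficulties, and the paper spends most of \Cref{sec:oddcycle} on their analogues. The paper does \emph{not} try to merge two of four cycles; instead it refines a linear forest decomposition $\{F_1,F_2,F_3\}$ so that $p(\mathcal F)=0$ and each $t_i(\mathcal F)>0$ (\Cref{lem:matchings with even intersection} and \Cref{lem:linear forests good for cycles}), takes such an odd-cover minimizing $\sum r_{ij}$, shows this forces $r_{12}=r_{13}=r_{23}=2$, and then closes each $F_i$ into a single cycle with two edges. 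So the routes diverge: yours manipulates the ``small'' cycles $A_2,B_2$, while the paper manipulates three linear forests and adds three closing edges at the very end.

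There is a concrete gap beyond the deferred case analysis. Consider $G$ with $t_1,t_2\ge 2$ but only one vertex of degree $4$. Then $|V(H_1)\cap V(H_2)|=1$, so either $|V(M_1)\cap V(M_2)|=1$ (odd parity, no perfect matching of shared edges) or $|V(M_1)\cap V(M_2)|=0$ (then $A_2,B_2$ are vertex-disjoint and $C$ is two cycles). Your degenerate case does not trigger, and ``reroute'' or ``merge a different pair'' is not specified enough to see that it succeeds here; the paper handles exactly this situation by falling into conclusion~(ii) of \Cref{lem:linear forests good for cycles} (a polycycle plus a single cycle), which bypasses the whole construction. You would need an equally explicit fallback. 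A smaller point: the claim that shared edges ``necessarily lie in $K_{V(G)}\setminus E(G)$'' is not correct---an edge of $M_1$ with both endpoints in $V(M_2)$ can serve as a connector of $B_2$---though this does not break the degree computation.
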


We will then deduce \Cref{thm:odd cover general} as an immediate consequence, which we also restate in two parts.

\begin{theorem}\label{thm:path odd-cover general}
    Every graph $G$ admits a path odd-cover of size at most $\max \left\{\frac{\vodd(G)}{2}, \left \lceil\frac{\vodd(G)/2 + 3\Delta_e(G)}{4} \right \rceil\right\}$. In particular, if $G$ is Eulerian, then $G$ admits a path-odd cover of size at most $\left\lceil \frac{3}{4} \Delta(G) \right\rceil$. 
\end{theorem}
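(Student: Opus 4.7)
The plan is to first establish the Eulerian ``in particular'' statement using \Cref{thm:sum of three paths}, and then reduce the general case to it.

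\textbf{Eulerian case.} Suppose $G$ is Eulerian with $\Delta(G) = \Delta$ (necessarily even). By \Cref{cor:Petersen generalization undirected}, decompose $G$ into $\Delta/2$ edge-disjoint polycycles $H_1, \ldots, H_{\Delta/2}$. Pair them into $\lfloor \Delta/4\rfloor$ pairs $(H_{2i-1}, H_{2i})$, leaving one polycycle unpaired if $\Delta/2$ is odd. Each pair is an Eulerian graph of maximum degree at most $4$, so by \Cref{thm:sum of three paths} it admits a path odd-cover of size at most $3$. Any unpaired polycycle admits a path odd-cover of size at most $2$ by \Cref{lem:polycycle two paths or cycles}. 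Combining these path odd-covers via symmetric difference, which is valid because the polycycles are edge-disjoint, yields a path odd-cover of $G$ of size $3\lfloor \Delta/4\rfloor + 2\cdot \mathbf{1}[\Delta/2\text{ odd}] = \lceil 3\Delta/4\rceil$.

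\textbf{General case.} Let $v = \vodd(G)/2$ and $d = \Delta_e(G)$. The trivial lower bound of $v$ on the size of any path odd-cover (each odd-degree vertex must be an endpoint of an odd number of paths, contributing $2v$ to the total endpoint count) yields the ``$\max$ with $v$'' term automatically. For the upper bound, arbitrarily pair up the odd-degree vertices of $G$ and, in a sufficiently large host graph, add a matching $M$ of $v$ edges between these pairs disjoint from $E(G)$, forming $G' := G \cup M$. Then $G'$ is Eulerian with $\Delta(G') \leq d$, since each odd-degree vertex of $G$ has degree at most $d-1$.

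To produce a path odd-cover of $G = G' \oplus M$, we repeat the Eulerian construction on $G'$, but must absorb the edges of $M$ cleverly to avoid paying $+v$ paths naively. Concretely, decompose $G'$ into $d/2$ polycycles and pair them into $\lceil d/4\rceil$ groups. For each pair $(H_i, H_j)$, \Cref{thm:sum of three paths} yields three paths with six total endpoints whose symmetric difference is $H_i \cup H_j$. By choosing the polycycle decomposition, the pairing, and the application of \Cref{thm:sum of three paths} appropriately, we arrange that the endpoints of these paths tend to coincide with endpoints of matching edges; each such coincidence lets a matching edge be absorbed by truncating or extending the relevant path, changing the symmetric difference by that edge without increasing the path count. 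Matching edges not absorbed this way are handled by splitting paths at internal matching edges or by appending single-edge paths, with the per-pair surplus contributing at most $\lceil v_{ij}/4\rceil$ extra paths, where $v_{ij}$ is the number of matching edges in the pair. Summing over all $\lceil d/4\rceil$ pairs, using $\sum v_{ij} = v$, gives a total of at most $\lceil(v+3d)/4\rceil$ paths.

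The main obstacle is the absorption step: achieving a saving of roughly $3v/4$ paths over the naive approach. This requires careful management of path endpoints and matching-edge placement, likely via a strengthening of \Cref{thm:sum of three paths} that allows one to prescribe (or at least steer) the endpoints of its three output paths toward designated odd-degree vertices, together with a balanced distribution of the matching edges $M$ across the polycycle pairs. This is where the detailed structure of the proof of \Cref{thm:sum of three paths}, built on the transversal-pair technique of \Cref{lem:3 linear forests from transversal pair}, becomes crucial.
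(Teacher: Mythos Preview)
Your Eulerian case is correct and coincides with the paper's argument word for word.

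The general case, however, is not a proof. Two concrete problems. First, the sentence ``The trivial lower bound of $v$ on the size of any path odd-cover \ldots\ yields the `max with $v$' term automatically'' is backwards: you are proving an \emph{upper} bound, so the existence of a lower bound of $v$ buys you nothing. You still have to exhibit a path odd-cover of size at most $v$ in the regime $v \geq \lceil (v+3d)/4\rceil$, and you never do. Second, and more seriously, your ``absorption'' argument is a wish, not an argument: you assert that endpoints of the three paths from \Cref{thm:sum of three paths} can be steered toward the matching edges and that this saves the right number of paths, but you neither prove a strengthened version of \Cref{thm:sum of three paths} allowing endpoint prescription nor justify the claimed per-pair surplus of $\lceil v_{ij}/4\rceil$. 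You yourself call this the ``main obstacle'' and say it ``requires'' a strengthening that you do not supply.

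The paper takes an entirely different route for the general case: it does \emph{not} try to absorb matching edges into the three-path construction. Instead it invokes \Cref{lem:technical path prelim} (Lemma~21 of~\cite{BBCFRY23}) as a black box: for a matching $M'$ of size roughly $2t$ and an Eulerian graph of maximum degree at most $2t$ (built from $t$ of the polycycles), their symmetric difference admits a path odd-cover of size at most $2t$. The remaining $k-t$ polycycles are then handled by your Eulerian argument, giving $\lceil \tfrac{3}{2}(k-t)\rceil$ more paths, and leftover matching edges are taken as single-edge paths. A short case analysis (on the parity of $|M'|$, and separately when $|M'|=2$ using \Cref{lem:two paths exceptional case} and \Cref{lem:two good edges}) then yields the stated bound. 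The point is that the hard work of combining a matching with polycycles is already encapsulated in \Cref{lem:technical path prelim}; no endpoint-steering refinement of \Cref{thm:sum of three paths} is needed or attempted.
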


\begin{theorem}\label{thm:cycle odd-cover general}
    Every $n$-vertex Eulerian graph $G$ admits a cycle odd-cover of size at most $\left \lceil \frac{3}{4}\Delta(G) \right \rceil$. In fact, if the vertex degrees of $G$ are $d_1 \geq \cdots \geq d_n$, then $G$ admits a cycle odd-cover of size at most $d_1/2 + \lceil d_2/4 \rceil$.
\end{theorem}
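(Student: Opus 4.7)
The plan is to apply \Cref{cor:Petersen generalization undirected} to decompose $G$ into pieces tailored for either \Cref{thm:sum of three cycles} or \Cref{lem:polycycle two paths or cycles}, then combine the resulting sub-covers. Since $G$ is Eulerian, both $d_1$ and $d_2$ are even; set $t := d_2/2$. By the definition of $d_2$, at most one vertex of $G$ has degree exceeding $d_2 = 2t$, so the ``moreover'' clause of \Cref{cor:Petersen generalization undirected} yields an edge-disjoint decomposition $G = H_1 \cup \cdots \cup H_{d_1/2}$ in which $H_1, \ldots, H_t$ are polycycles and $H_{t+1}, \ldots, H_{d_1/2}$ are single cycles.

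Each $H_j$ with $t < j \leq d_1/2$ is itself a cycle odd-cover of size $1$. I pair up the polycycles: for $1 \leq i \leq \lfloor t/2 \rfloor$, the graph $H_{2i-1} \cup H_{2i}$ is Eulerian with maximum degree at most $4$, so \Cref{thm:sum of three cycles} gives a cycle odd-cover of it of size at most $3$. If $t$ is odd, the leftover polycycle $H_t$ has a cycle odd-cover of size at most $2$ by \Cref{lem:polycycle two paths or cycles}. Since the $H_i$ are pairwise edge-disjoint, the (multiset) union of these sub-covers is a cycle odd-cover of $G$: an edge of $K_n$ lies in $G$ iff it lies in exactly one $H_i$, iff it lies in an odd number of cycles from the corresponding sub-cover, iff it lies in an odd number of cycles in the union.

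Summing sizes yields at most $(d_1/2 - t) + 3 \lfloor t/2 \rfloor$ when $t$ is even and at most $(d_1/2 - t) + 3(t-1)/2 + 2$ when $t$ is odd; a short case analysis on the parity of $t$ (equivalently, on $d_2 \bmod 4$) shows both expressions equal $d_1/2 + \lceil d_2/4 \rceil$. The weaker bound $\lceil 3 d_1/4 \rceil$ then follows from $d_2 \leq d_1$. I do not foresee a serious obstacle; the argument is essentially an assembly of \Cref{thm:sum of three cycles} (applied to each pair of polycycles) together with the refined decomposition of \Cref{cor:Petersen generalization undirected}, which is what enables us to peel off $d_1/2 - d_2/2$ pure cycles for free before any pairing is needed, thereby improving a naive $\lceil 3 d_1/4 \rceil$ to the sharper $d_1/2 + \lceil d_2/4 \rceil$.
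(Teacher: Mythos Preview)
Your proposal is correct and follows essentially the same approach as the paper: apply \Cref{cor:Petersen generalization undirected} with $t=d_2/2$ to peel off $d_1/2-d_2/2$ genuine cycles, pair up the remaining polycycles and invoke \Cref{thm:sum of three cycles} on each pair, and use \Cref{lem:polycycle two paths or cycles} on the leftover polycycle when $d_2/2$ is odd. Your arithmetic and the parity bookkeeping match the paper's exactly.
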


We briefly discuss our strategy for proving the above theorems in Section \ref{sec:refinelinearforest}.
We then prove Theorems \ref{thm:sum of three paths} and \ref{thm:path odd-cover general} in Section \ref{sec:path odd-covers}, and prove Theorems \ref{thm:sum of three cycles} and \ref{thm:cycle odd-cover general} in Section \ref{sec:oddcycle}.

\subsection{Refining linear forest decompositions} \label{sec:refinelinearforest}

The \emph{endpoints} of a linear forest~$F$ are the vertices of degree 1 in~$F$. We denote the set of endpoints of~$F$ by~$\mathrm{end}(F)$. Given an Eulerian graph~$G$ of maximum degree~$4$ and a linear forest decomposition (or, more generally, a linear forest odd-cover)~$\mathcal F = \{F_1, F_2, F_3\}$ of~$G$, we use the following notation. For distinct~$i,j \in \{1,2,3\}$, we denote by~$R_{ij}(\mathcal F)$ the set~$\mathrm{end}(F_i) \cap \mathrm{end}(F_j)$ of common endpoints of~$F_i$ and~$F_j$, and we write~$r_{ij}(\mathcal F) := |R_{ij}(\mathcal F)|$. Note that~$R_{12}(\mathcal F)$,~$R_{13}(\mathcal F)$, and~$R_{23}(\mathcal F)$ are disjoint and partition the set~$\mathrm{end}(F_1) \cup \mathrm{end}(F_2) \cup \mathrm{end}(F_3)$ because~$G$ is Eulerian. For distinct~$i,j,k\in\{1,2,3\}$, we denote by~$\mathcal T_i(\mathcal F)$ the set of components of~$F_i$ with one endpoint in~$R_{ij}(\mathcal F)$ and the other in~$R_{ik}(\mathcal F)$, and we write~$t_i(\mathcal F) := |\mathcal T_i(\mathcal F)|$. As we will soon see (see \Cref{lem:linear forests parity control}), all of these parameters~$r_{ij}(\mathcal F)$ ($\{i,j\} \subseteq \{1,2,3\}$) and~$t_i(\mathcal F)$ ($1 \leq i \leq 3$) have the same parity, which we call the \emph{parity} of~$\mathcal F$ and denote by~$p(\mathcal F) \in \{0,1\}$. For each of these parameters, we may omit the argument~$\mathcal F$ if there is no danger of ambiguity. Figure \ref{fig:parity} depicts an example with even parity.

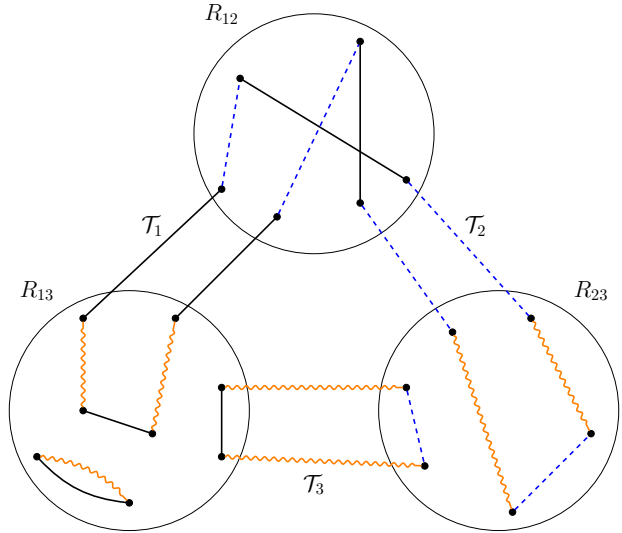
\begin{figure}[h]
\centering
\resizebox{0.5\textwidth}{!}{
\begin{tikzpicture}[vertices/.style={draw, fill=black, circle, inner sep=0pt, minimum size = 4pt, outer sep=0pt}]

\tikzset{decoration={snake,amplitude=.4mm,segment length=2mm, post length=1mm, pre length=0mm}}

\draw (0,0) circle (2.6cm);
\node at (-2, 2.6) {\Large $R_{13}$};
\draw (4,6) circle (2.6cm);
\node at (2, 8.6) {\Large $R_{12}$};
\draw (8,0) circle (2.6cm);
\node at (10, 2.6) {\Large $R_{23}$};

\node at (0.5, 4) {\Large $\mathcal{T}_1$};
\node at (7.5, 4) {\Large $\mathcal{T}_2$};
\node at (4, -1.6) {\Large $\mathcal{T}_3$};

\node[vertices] (c_1) at (-2, -1) {};
\node[vertices] (c_2) at (0, -2) {};
\node[vertices] (c_3) at (-1, 0) {};
\node[vertices] (c_4) at (0.5, -0.5) {};
\node[vertices] (c_5) at (-1, 2) {};
\node[vertices] (c_6) at (1, 2) {};
\node[vertices] (c_7) at (2, 0.5) {};
\node[vertices] (c_8) at (2, -1) {};

\node[vertices] (e_1) at (2, 4.8) {};
\node[vertices] (e_2) at (3.2, 4.2) {};
\node[vertices] (e_3) at (5, 8) {};
\node[vertices] (e_4) at (2.4, 7.2) {};
\node[vertices] (e_5) at (6, 5) {};
\node[vertices] (e_6) at (5, 4.5) {};

\node[vertices] (f_1) at (6, 0.5) {};
\node[vertices] (f_2) at (6.4, -1.2) {};
\node[vertices] (f_3) at (7, 1.7) {};
\node[vertices] (f_4) at (8.7, 2) {};
\node[vertices] (f_5) at (8.3, -2.2) {};
\node[vertices] (f_6) at (10, -0.5) {};

\path [draw=black, line width= 1] (c_1) edge[bend right=20] (c_2);
\path [draw=black, line width= 1] (c_3) edge (c_4);
\path [draw=black, line width= 1] (c_7) edge (c_8);
\path [draw=black, line width= 1] (c_5) edge (e_1);
\path [draw=black, line width= 1] (c_6) edge (e_2);
\path [draw=black, line width= 1] (e_3) edge (e_6);
\path [draw=black, line width= 1] (e_4) edge (e_5);

\path [draw=blue, dashed, line width= 1] (e_1) edge (e_4);
\path [draw=blue, dashed, line width= 1] (e_2) edge (e_3);
\path [draw=blue, dashed, line width= 1] (e_5) edge (f_4);
\path [draw=blue, dashed, line width= 1] (e_6) edge (f_3);
\path [draw=blue, dashed, line width= 1] (f_1) edge (f_2);
\path [draw=blue, dashed, line width= 1] (f_5) edge (f_6);

\path [draw=orange, line width= 1] (c_1) edge[bend left=20,decorate] (c_2);
\path [draw=orange, line width= 1] (c_3) edge[decorate] (c_5);
\path [draw=orange, line width= 1] (c_4) edge[decorate] (c_6);
\path [draw=orange, line width= 1] (c_7) edge[decorate] (f_1);
\path [draw=orange, line width= 1] (c_8) edge[decorate] (f_2);
\path [draw=orange, line width= 1] (f_3) edge[decorate] (f_5);
\path [draw=orange, line width= 1] (f_4) edge[decorate] (f_6);

\end{tikzpicture}
}

\caption{A linear forest decomposition or odd-cover $\mathcal F = \{F_1, F_2, F_3\}$ of a graph with even parity. Each line corresponds to a path with designated endpoints (internal vertices are not depicted and may coincide with other endpoints). Black, solid lines represent $F_1$, blue, dashed lines represent $F_2$, and orange, wavy lines represent $F_3$.}\label{fig:parity}
\end{figure}

Our strategy for proving \Cref{thm:sum of three paths} and \Cref{thm:sum of three cycles} is to convert a linear forest decomposition $\mathcal F = \{F_1, F_2, F_3\}$ into a path (or cycle) odd-cover by finding a matching~$M_{ij}$ with~$V(M_{ij})\subseteq R_{ij}$ so that 
\[\mathcal F' := \{F_1 \cup M_{12} \cup M_{13}, F_2 \cup M_{12} \cup M_{23}, F_3 \cup M_{13} \cup M_{23}\}\]
is a path (or cycle) odd-cover. For this strategy to succeed, it is necessary that~$p(\mathcal F) = 1$ in the path case and~$p(\mathcal F) = 0$ in the cycle case. Indeed, if~$\mathcal F'$ is a path odd-cover, then it is also a linear forest odd-cover with odd parity, and it is easily seen that~$p(\mathcal F) = p(\mathcal F')$. Similarly, if~$\mathcal F'$ is a cycle odd-cover, then because it has no endpoints, each~$M_{ij}$ must be a perfect matching in~$R_{ij}(\mathcal F)$, and so we need~$r_{ij}$ to be even. For the path case, the condition~$p(\mathcal F) = 1$ also turns out to be sufficient, so our proof of \Cref{thm:sum of three paths} in \Cref{sec:path odd-covers} is rather short. For the cycle case, however, we will not only need that~$p(\mathcal F) = 0$, but also that each~$t_i(\mathcal F) > 0$; otherwise, in some~$F_i$, we will have no way to join the components with endpoints in~$R_{ij}$ with those with endpoints in~$R_{ik}$.
Finding such a decomposition~$\mathcal F$ will require a bit more effort.

We begin by verifying that our parity parameter~$p(\mathcal F)$ is well-defined.

\begin{prop}\label{lem:linear forests parity control}
	Let~$G$ be an Eulerian graph of maximum degree~$4$, and let~$\mathcal F = \{F_1, F_2, F_3\}$ be a linear forest odd-cover of~$G$. Then
    \begin{equation}\label{eq:same parity}
        r_{12} \equiv r_{13} \equiv r_{23} \equiv t_1 \equiv t_2 \equiv t_3 \pmod 2.
    \end{equation}
    That is, the parity parameter~$p = p(\mathcal F) \in \{0,1\}$ is well-defined.
\end{prop}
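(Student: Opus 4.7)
The plan is to use the Eulerian condition in $G = F_1 \oplus F_2 \oplus F_3$ to tightly constrain the local parity of the endpoint sets, and then to read off the mod-$2$ congruences in \eqref{eq:same parity} from straightforward degree/edge-counting in each linear forest $F_i$.

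First I would work locally at each vertex $v$. Since $F_i$ is a linear forest, its degree $d_i(v) := \deg_{F_i}(v)$ lies in $\{0,1,2\}$, and $v \in \mathrm{end}(F_i)$ precisely when $d_i(v)$ is odd. Because $G$ is Eulerian and $G = F_1 \oplus F_2 \oplus F_3$, we have
\[
d_1(v) + d_2(v) + d_3(v) \equiv \deg_G(v) \equiv 0 \pmod 2,
\]
so an even number of the $d_i(v)$ are odd. Therefore every vertex is an endpoint of either $0$ or exactly $2$ of the forests, confirming that $R_{12}, R_{13}, R_{23}$ partition $\mathrm{end}(F_1) \cup \mathrm{end}(F_2) \cup \mathrm{end}(F_3)$ as already noted.

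Next I would show that the three $r_{ij}$'s share a common parity. Fix $i \in \{1,2,3\}$ and let $\{j,k\} = \{1,2,3\} \setminus \{i\}$. Since each nontrivial component of $F_i$ contributes exactly two endpoints and each trivial (isolated) component contributes none, $|\mathrm{end}(F_i)|$ is even; but $\mathrm{end}(F_i) = R_{ij} \sqcup R_{ik}$, so
\[
r_{ij} + r_{ik} \equiv 0 \pmod 2.
\]
Applied for all three choices of $i$, this forces $r_{12} \equiv r_{13} \equiv r_{23} \pmod 2$.

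Finally I would bring $t_i$ into the picture by counting the endpoints in $R_{ij}$ according to which component of $F_i$ they belong to. Each nontrivial component $P$ of $F_i$ has two endpoints, each of which lies in $R_{ij}$ or in $R_{ik}$; write $a_i$ for the number of such components with both endpoints in $R_{ij}$. Then
\[
r_{ij} = 2a_i + t_i,
\]
so $r_{ij} \equiv t_i \pmod 2$. Combining this with the previous congruence, every $r_{ij}$ and every $t_i$ is congruent modulo $2$, establishing \eqref{eq:same parity}. The argument is a routine parity/edge-count, and there is no substantive obstacle; the only point worth stressing is the initial local-parity consequence of the Eulerian hypothesis, which is what guarantees that the three sets $R_{ij}$ partition the endpoints in the first place.
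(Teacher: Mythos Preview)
Your proof is correct and follows essentially the same route as the paper: you use evenness of $|\mathrm{end}(F_i)|=r_{ij}+r_{ik}$ to get $r_{12}\equiv r_{13}\equiv r_{23}$, and then the component-by-component count $r_{ij}=2a_i+t_i$ to get $r_{ij}\equiv t_i$. The only cosmetic difference is that you also spell out the local parity argument showing the $R_{ij}$ partition the endpoints, which the paper had already recorded just before the proposition.
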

\begin{proof}
	Let~$i,j,k\in \{1,2,3\}$ be pairwise distinct. 
    Note that~$R_{ij}$ and~$R_{ik}$ are disjoint and $R_{ij}\cup R_{ik} = \mathrm{end}(F_i)$. Since every component of~$F_i$ has two endpoints, we have
    \[r_{ij} + r_{ik} = |\mathrm{end}(F_i)| \equiv 0 \pmod 2.\]
    Hence $r_{ij}\equiv r_{ik}\pmod 2$ and, by symmetry, we have $r_{12}\equiv r_{13}\equiv r_{23} \pmod 2$.
    Moreover, $\mathcal{T}_i$ is precisely the set of components of~$F_i$ with an odd number of endpoints in~$R_{ij}$, so
	\[r_{ij} \equiv t_{i} \pmod 2.\]
    This establishes~\eqref{eq:same parity}.
\end{proof}

To find a linear forest decomposition $\mathcal F$ of our graph $G$ with a particular parity $p(\mathcal F)$, we begin by considering a decomposition of $G$ into two polycycles $H_1$ and $H_2$ as in \Cref{cor:Petersen generalization undirected}. We then carefully choose a transversal pair $(M_1, M_2)$ of matchings and apply \Cref{lem:3 linear forests from transversal pair} to obtain $\mathcal F$.  The following lemma shows that $p(\mathcal F)$ is determined by the parity of $|V(M_1) \cap V(M_2)|$. 

\begin{lemma} \label{lem:linearforestendsparity}
    Let~$H_1$ and~$H_2$ be edge-disjoint polycycles, let~$(M_1, M_2)$ be a transversal pair of matchings for~$(H_1, H_2)$, and let~$\mathcal F = \{F_1, F_2, F_3\}$ be a linear forest decomposition of~$H_1 \cup H_2$ such that~$H_1 \setminus M_1 \subseteq F_1 \subseteq (H_1\setminus M_1) \cup M_2$,~$M_1 \subseteq F_2 \subseteq M_1\cup M_2$, and~$H_2 \setminus M_2 = F_3$. 
    Then 
    \begin{align*}
        \mathrm{end}(F_1) &= V(M_1) \oplus V(M_2 \cap F_1),\\
        \mathrm{end}(F_2) &= V(M_1) \oplus V(M_2 \cap F_2),\\
        \mathrm{end}(F_3) &= V(M_2), \text{ and} \\
        p(\mathcal{F}) &\equiv |V(M_1)\cap V(M_2)| \pmod 2.
    \end{align*}
\end{lemma}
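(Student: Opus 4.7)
The plan is a direct verification of the four claimed equalities, using the specific description of $F_1$, $F_2$, $F_3$ produced by the construction in \Cref{lem:3 linear forests from transversal pair}. I would handle the three endpoint identities first and then deduce the parity statement as a short set-theoretic computation.

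First, for $F_3 = H_2 \setminus M_2$: since $H_2$ is a polycycle and $M_2$ is transversal to it, $F_3$ is a disjoint union of paths obtained by deleting one edge from each cycle component of $H_2$, so its endpoints are precisely the vertices incident to those deleted edges, namely $V(M_2)$. For $F_1$, the key input from the proof of \Cref{lem:3 linear forests from transversal pair} is that the matching $M' = M_2 \cap F_1$ used there satisfies $V(M') \subseteq V(M_1)$. In $H_1 \setminus M_1$, each vertex of $V(M_1)$ has degree exactly $1$ while every other vertex of $V(H_1)$ has degree $2$, so adding an edge of $M_2 \cap F_1$ (whose endpoints both lie in $V(M_1)$) raises the degrees of those endpoints in $F_1$ to $2$. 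Hence $\mathrm{end}(F_1) = V(M_1) \setminus V(M_2 \cap F_1)$, which equals $V(M_1) \oplus V(M_2 \cap F_1)$ because of the containment. For $F_2 = M_1 \cup (M_2 \cap F_2)$, this is a disjoint union of two matchings, so a vertex has degree $1$ in $F_2$ if and only if it is covered by exactly one of $M_1$ or $M_2 \cap F_2$, giving $\mathrm{end}(F_2) = V(M_1) \oplus V(M_2 \cap F_2)$.

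For the parity statement, observe that $M_2 \cap F_1$ and $M_2 \cap F_2$ partition the edge set of the matching $M_2$, so their vertex sets are disjoint with union $V(M_2)$. A short set-theoretic calculation using the endpoint formulas (and the containment $V(M_2 \cap F_1) \subseteq V(M_1)$) then gives $R_{12} = \mathrm{end}(F_1) \cap \mathrm{end}(F_2) = V(M_1) \setminus V(M_2)$, so $r_{12} = |V(M_1)| - |V(M_1) \cap V(M_2)|$. Since $|V(M_1)| = 2|M_1|$ is even, applying \Cref{lem:linear forests parity control} yields $p(\mathcal F) \equiv r_{12} \equiv |V(M_1) \cap V(M_2)| \pmod 2$.

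The proof amounts to bookkeeping and no step should present a real obstacle. The only subtle point worth flagging is the containment $V(M_2 \cap F_1) \subseteq V(M_1)$, which is not part of the hypothesis here but is baked into the construction of $F_1$ in \Cref{lem:3 linear forests from transversal pair}; without it, the endpoint identity for $F_1$ would not take the clean symmetric-difference form asserted, and one would need the more cumbersome description $V(M_1) \oplus (V(M_1) \cap V(M_2 \cap F_1))$ instead.
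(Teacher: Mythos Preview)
Your argument for $\mathrm{end}(F_2)$ and $\mathrm{end}(F_3)$ is fine and matches the paper. The gap is in your treatment of $\mathrm{end}(F_1)$: you import the containment $V(M_2\cap F_1)\subseteq V(M_1)$ from the proof of \Cref{lem:3 linear forests from transversal pair}, but this is not among the hypotheses of the present lemma, and your claim that the identity fails without it is incorrect. The paper instead uses the general observation that if $L_1,L_2$ are edge-disjoint linear forests whose union $L_1\cup L_2$ is again a linear forest, then $\mathrm{end}(L_1\cup L_2)=\mathrm{end}(L_1)\oplus\mathrm{end}(L_2)$ (a vertex has degree~$1$ in the union iff its degrees in $L_1,L_2$ are $\{0,1\}$ in some order). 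Applying this with $L_1=H_1\setminus M_1$ and $L_2=M_2\cap F_1$, whose union is $F_1$ by hypothesis, gives $\mathrm{end}(F_1)=V(M_1)\oplus V(M_2\cap F_1)$ directly from the stated assumptions. Your own degree count nearly gets there too: since $F_1$ is a linear forest, any endpoint of an edge in $M_2\cap F_1$ lying in $V(H_1)$ must already lie in $V(M_1)$ (else its degree in $F_1$ would be~$3$), so the only vertices of $V(M_2\cap F_1)$ outside $V(M_1)$ lie outside $V(H_1)$ and thus have degree~$1$ in $F_1$---exactly the $V(M_2\cap F_1)\setminus V(M_1)$ half of the symmetric difference you were worried about.

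For the parity, your route via $r_{12}$ is different from (and arguably cleaner than) the paper's, which instead shows $r_{23}\equiv|V(M_1)\cap V(M_2)|\pmod 2$ by summing $|\mathrm{end}(F_2)\cap e|\equiv|V(M_1)\cap e|\pmod 2$ over edges $e\in M_2$. Note, though, that your computation $R_{12}=V(M_1)\setminus V(M_2)$ does not actually need the containment either: once you have $\mathrm{end}(F_1)=V(M_1)\oplus A$ and $\mathrm{end}(F_2)=V(M_1)\oplus B$ with $A,B$ disjoint and $A\cup B=V(M_2)$, a vertex in both symmetric differences must lie in $V(M_1)$ and avoid both $A$ and $B$, so $R_{12}=V(M_1)\setminus V(M_2)$ follows immediately.
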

\begin{proof}

    Note that for any two edge-disjoint linear forests $L_1$ and $L_2$, if $L_1\cup L_2$ is a linear forest, then $\ennd(L_1\cup L_2) = \ennd(L_1)\oplus \ennd(L_2)$ and, for any matching $M$, we have $\ennd(M)=V(M)$.

    Since $H_1\setminus M_1 \subseteq F_1 \subseteq (H_1\setminus M_1)\cup M_2$, we have $F_1 = (H_1\setminus M_1)\cup (M_2\cap F_1)$.
    Since $H_1$ and $H_2$ are edge-disjoint and $M_1$ is transversal to $H_1$, we have 
    \[\ennd(F_1) = \ennd(H_1\setminus M_1)\oplus \ennd(M_2\cap F_1) = \ennd(M_1)\oplus \ennd(M_2\cap F_1) = V(M_1)\oplus V(M_2\cap F_1).\]
    Since $M_1\subseteq F_2\subseteq M_1\cup M_2$, we have $F_2 = M_1 \cup (M_2 \cap F_2)$ and hence 
    \[\ennd(F_2) = \ennd(M_1)\oplus \ennd(M_2 \cap F_2) = V(M_1)\oplus V(M_2\cap F_2).\]
    Since $F_3 = H_2\setminus M_2$ and $M_2$ is transversal to $H_2$, we clearly have \[\mathrm{end}(F_3) = \ennd(M_2) = V(M_2).\]

    To establish the last statement of this lemma, it suffices to show that~$|V(M_1) \cap V(M_2)| \equiv r_{23} \pmod 2$ (recall that $r_{23}=|\ennd(F_2)\cap \ennd(F_3)|$).
    Consider an edge~$e = \{v_1,v_2\} \in M_2$. 
    
    If $e\notin F_2$, then $v_1,v_2\not\in V(M_2\cap F_2)$ because $M_2$ is a matching. Since $\ennd(F_2) = V(M_1)\oplus V(M_2\cap F_2)$, we have $v_i\in \ennd(F_2)$ if and only if $v_i\in V(M_1)$ for each $i\in \{1,2\}$. This implies that $|\ennd(F_2)\cap e| = |V(M_1)\cap e|$.
    
    Similarly, if $e\in F_2$, then $v_1,v_2\in V(M_2\cap F_2)$, so we have $v_i\in \ennd(F_2)$ if and only if $v_i\not\in V(M_1)$ for each $i\in\{1,2\}$. This implies that~$|\mathrm{end}(F_2) \cap e| = 2 - |V(M_1) \cap e|$.

    In either case, we have
	\[|\mathrm{end}(F_2) \cap e| \equiv |V(M_1) \cap e| \pmod 2.\]
	As this holds for every edge~$e$ in the matching~$M_2$, it follows that
	\begin{align*}
	r_{23} &= |\mathrm{end}(F_2) \cap \mathrm{end}(F_3)| = |\mathrm{end}(F_2) \cap V(M_2)| =\sum_{e\in M_2} |\mathrm{end}(F_2) \cap e|\\
    &\equiv \sum_{e\in M_2} |V(M_1) \cap e|=|V(M_1) \cap V(M_2)| \pmod 2.
    \end{align*}
\end{proof}

\subsection{Path odd-covers}\label{sec:path odd-covers}
In this section, we prove \Cref{thm:sum of three paths}, which states that any Eulerian graph of maximum degree~$4$ is a symmetric difference of at most three paths. From this, we deduce \Cref{thm:path odd-cover general}, following the framework of~\cite{BBCFRY23}. 

Our first goal is to find a linear forest decomposition of size~$3$ with odd parity.
Recall that, given two edge-disjoint polycycles $H_1$ and $H_2$ and a transversal pair $(M_1,M_2)$ of matchings for $(H_1,H_2)$, Lemma \ref{lem:3 linear forests from transversal pair} finds a linear forest decomposition of $H_1\cup H_2$ of size 3, and its parity is equal to the parity of $|V(M_1)\cap V(M_2)|$ by Lemma \ref{lem:linearforestendsparity}. Hence, given $H_1$ and $H_2$, it suffices to find a transversal pair $(M_1,M_2)$ of matchings for $(H_1,H_2)$ with $|V(M_1)\cap V(M_2)|$ odd.

To control this parity, we make use of the following definition. Let $n$ be a positive integer. We say that a cycle $C \subseteq K_n$ is \emph{flexible} with respect to a vertex set $V \subseteq V(K_n)$ if $C$ contains two edges $e_0, e_1$ such that for each $i\in\{0,1\}$, we have
\[|e_i \cap V| \equiv i \pmod 2.\]
We first show that a cycle that intersects a given set $V$ of vertices can be made flexible with respect to $V$ by replacing at most one vertex of $V$.
\begin{lemma}\label{lem:exchange}
    Let $n$ be a positive integer, and let $C \subseteq K_n$ be a cycle. Let $x,z \in V(K_n)$ be two distinct vertices with $x \in V(C)$, and let $V \subseteq V(K_n)$ be a set of vertices containing $x$ but not $z$. Then $C$ is flexible with respect to at least one of $V$ or $V \cup \{z\} \setminus \{x\}$.
\end{lemma}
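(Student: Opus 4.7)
The plan is to prove the contrapositive: if $C$ is not flexible with respect to $V$, then it is flexible with respect to $V' := V \cup \{z\} \setminus \{x\}$. The key is a simple reformulation: for any vertex set $W$ and edge $e = \{u,v\}$ of $C$, the parity of $|e \cap W|$ is $1$ precisely when $u$ and $v$ differ in $W$-membership. Hence $C$ is \emph{not} flexible with respect to $W$ exactly when either (a) every edge of $C$ has $|e \cap W|$ even, so $W$-membership is constant on $V(C)$ (i.e.\ $W \cap V(C) \in \{\emptyset, V(C)\}$), or (b) every edge has $|e \cap W|$ odd, so $W$-membership alternates around $C$ (i.e.\ $|V(C)|$ is even and $W \cap V(C)$ is one of the two color classes of $C$). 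Since $x \in V \cap V(C)$, the assumption that $C$ is not flexible with respect to $V$ leaves two cases: either $V(C) \subseteq V$, or $|V(C)|$ is even and $V \cap V(C)$ is a color class of $C$.

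In the first case, $z \notin V(C)$ because $z \notin V$, so $V' \cap V(C) = V(C) \setminus \{x\}$. This is a proper nonempty subset of $V(C)$, since $|V(C)| \geq 3$, and its size $|V(C)| - 1$ differs from $|V(C)|/2$ (again using $|V(C)| \geq 3$), so it cannot be a color class of an even cycle on $V(C)$. Thus $V' \cap V(C)$ satisfies neither (a) nor (b) with $W = V'$, so $C$ is flexible with respect to $V'$.

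In the second case, I would split on whether $z \in V(C)$ and locate explicit witness edges. If $z \notin V(C)$, then $V' \cap V(C) = (V \cap V(C)) \setminus \{x\}$: the edge $\{x,u\}$ for any neighbor $u$ of $x$ in $C$ has $|e \cap V'| = 0$ (since $u$ lies in the opposite color class so $u \notin V$, and $u \neq z$), while any edge of $C$ not incident to $x$ still satisfies $|e \cap V'| = 1$; the latter exists because $|V(C)| \geq 4$. If $z \in V(C)$, then $z$ must lie in the color class opposite to $x$'s (because $z \notin V$), and I would handle separately the cases where $z$ is or is not adjacent to $x$ in $C$. When $z$ is adjacent to $x$, the edge $\{x,z\}$ has $|e \cap V'| = 1$ and the edge $\{x,u\}$ for the other neighbor $u$ of $x$ has $|e \cap V'| = 0$. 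When $z$ is not adjacent to $x$, first observe $|V(C)| \geq 6$ (in a $4$-cycle, non-adjacent vertices lie in the same color class, forbidden here); then an edge $\{x,u\}$ with $u \neq z$ has $|e \cap V'| = 0$, and any edge of $C$ incident to neither $x$ nor $z$ has $|e \cap V'| = 1$.

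The main bookkeeping is to track how $|e \cap V'|$ differs from $|e \cap V|$ on the at most four edges adjacent to the toggled vertices $x$ and $z$; the only real concern is ensuring the ``far'' edges needed as witnesses of flexibility actually exist in $C$, which is handled by the small-cycle checks above.
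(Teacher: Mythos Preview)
Your proof is correct and follows essentially the same approach as the paper: both argue the contrapositive via the same two-case split (all intersections even, forcing $V(C)\subseteq V$; or all intersections odd, forcing $V\cap V(C)$ to be a color class of an even cycle) and then exhibit witness edges for flexibility with respect to $V'$. The only difference is cosmetic: in the second case the paper avoids your subcases on whether $z\in V(C)$ and whether $z$ is adjacent to $x$ by simply picking $y\in\{y_c,y_1\}\setminus\{z\}$ and $y'\in\{y_1,y_2\}\setminus\{z\}$, which handles all possibilities at once.
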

\begin{proof}
        Suppose that~$C$ is not flexible with respect to $V$. We consider two cases.
        \begin{description}
            \item[Case 1: Every edge of $C$ has even intersection with $V$.] 
            
            Since~$x \in V \cap V(C)$, we have~$V(C) \subseteq V$. Therefore, 
                \[\left(V \cup \{z\} \setminus \{x\}\right) \cap V(C) = V(C) \setminus \{x\}.\]
            Since~$C$ is a cycle (of length at least $3$),~$C$ has an edge~$e_0$ contained in~$V \cup \{z\} \setminus \{x\}$ and this edge satisfies $|e_0 \cap (V\cup \{z\} \setminus \{x\})|\equiv 0\pmod 2$. Moreover, any edge~$e_1$ of $C$ incident to $x$ satisfies $|e_1 \cap (V\cup \{z\} \setminus \{x\})|\equiv 1\pmod 2$.
            Thus~$C$ is flexible with respect to $V \cup \{z\} \setminus \{x\}$.
            \item[Case 2: Every edge of $C$ has odd intersection with $V$.]
            
            In this case,~$C$ is an even cycle and we can write $C=(x_1, y_1, \ldots, x_c, y_c)$ with $c \geq 2$ so that $x=x_1$ and
                \[V \cap V(C) = \{x_i : i \in [c]\}.\]
            Let~$y \in \{y_c, y_1\} \setminus \{z\}$ and~$y' \in \{y_1, y_2\} \setminus \{z\}$, so that $y,y'\not\in V\cup\{z\}\setminus\{x\}$. Then~$e_0 := x_1y$ is disjoint from~$V \cup \{z\} \setminus \{x\}$, and~$e_1 := x_2y'$ intersects~$V \cup \{z\} \setminus \{x\}$ precisely in the vertex~$x_2$. Hence,~$C$ is flexible with respect to $V \cup \{z\} \setminus \{x\}$.
        \end{description}
    \end{proof}

Given edge-disjoint polycycles $H_1$ and $H_2$, we first choose a matching $M_1$ transversal to $H_1$ so that a cycle $D$ of $H_2$ is flexible with respect to $V(M_1)$. We then use this flexibility to choose a matching $M_2$ transversal to $H_2$ so that $|V(M_1)\cap V(M_2)|$ is odd. For a cycle $C$ whose vertices occur in the cyclic order $v_1,v_2,\dots,v_c$, we identify $C$ with the sequence $(v_1,v_2,\dots,v_c)$.

\begin{lemma}\label{lem:matchings with odd intersection}
	For any two polycycles~$H_1$ and~$H_2$ intersecting in at least one vertex, there exists a transversal pair~$(M_1, M_2)$ of matchings for~$(H_1, H_2)$ such that~$|V(M_1) \cap V(M_2)|$ is odd.
\end{lemma}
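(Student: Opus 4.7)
The plan is to reduce the problem to finding a transversal matching $M_2$ of $H_2$ together with a component $C$ of $H_1$ that is flexible with respect to $V(M_2)$. Given such $M_2$ and $C$, one can choose $M_1 \cap C'$ arbitrarily for each component $C' \neq C$ of $H_1$ and then use the flexibility of $C$ to pick $M_1 \cap C$ so that the parity of $|V(M_1) \cap V(M_2)|$ is odd: flexibility provides two edges of $C$ whose intersections with $V(M_2)$ have opposite parities, so exactly one of the two resulting choices of $M_1$ has the desired parity.

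To produce such $M_2$ and $C$, fix a vertex $v \in V(H_1) \cap V(H_2)$ and let $C$ and $D$ be the components of $H_1$ and $H_2$ containing $v$, respectively. Fix a neighbor $d_1$ of $v$ in $D$, and let $d_1'$ denote the other neighbor of $d_1$ in $D$ (distinct from $v$ since $D$ is a simple cycle of length at least $3$). Define $M_2$ to be any transversal matching of $H_2$ with $M_2 \cap D = \{vd_1\}$. Because the components of the polycycle $H_2$ are pairwise vertex-disjoint, the vertex $d_1' \in V(D)$ lies in no $V(M_2 \cap D')$ for $D' \neq D$; together with $d_1' \neq v, d_1$, this gives $d_1' \notin V(M_2)$.

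Now Lemma~\ref{lem:exchange} applies to the cycle $C$ with $V = V(M_2)$, $x = v$, and $z = d_1'$, since $v \in V(C) \cap V(M_2)$ and $d_1' \notin V(M_2) \cup \{v\}$. Its conclusion is that $C$ is flexible with respect to $V(M_2)$ or with respect to $V(M_2) \cup \{d_1'\} \setminus \{v\}$. In the latter case, we modify $M_2$ to $M_2'$ by replacing $vd_1$ with $d_1d_1'$ in $M_2 \cap D$; the vertex-disjointness of the components of $H_2$ yields $V(M_2') = V(M_2) \cup \{d_1'\} \setminus \{v\}$, so $C$ is flexible with respect to $V(M_2')$. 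In either case, we obtain a transversal matching of $H_2$ whose vertex set makes $C$ flexible, and the construction of $M_1$ from the first paragraph completes the proof.

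The main (and essentially the only) subtlety is that the ``abstract'' swap $v \to d_1'$ in $V$ produced by Lemma~\ref{lem:exchange} must be realized by an actual transversal matching of $H_2$. The design choice to place $vd_1$ in $M_2 \cap D$ is what enables this: the desired vertex swap is carried out by the local edge swap $vd_1 \to d_1d_1'$ inside the cycle $D$, and the vertex-disjointness of the components of $H_2$ ensures that no other matching edge is affected, so $V(M_2')$ really differs from $V(M_2)$ only in the exchange of $v$ for $d_1'$.
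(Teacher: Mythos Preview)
Your proof is correct and follows essentially the same approach as the paper's: use \Cref{lem:exchange} to produce a transversal matching on one polycycle that makes a chosen component of the other polycycle flexible, then exploit this flexibility to fix the parity of $|V(M_1)\cap V(M_2)|$. The only difference is that you swap the roles of $H_1$ and $H_2$ (you first build $M_2$ and make a component $C$ of $H_1$ flexible, whereas the paper first builds $M_1$ and makes a component $D$ of $H_2$ flexible), and correspondingly you realize the vertex swap from \Cref{lem:exchange} by an edge swap inside $D$ rather than inside $C$; the arguments are otherwise identical.
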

\begin{proof}
	Since~$V(H_1) \cap V(H_2) \neq \emptyset$, there exist intersecting components~$C = (x_1, \ldots, x_c)$ of~$H_1$ and $D = (y_1, \ldots, y_d)$ of~$H_2$ with~$x_1 = y_1$. Let~$M_1'$ be any matching that is transversal to~$H_1 \setminus C$. Applying \Cref{lem:exchange} with $V := V(M_1') \cup \{x_1, x_2\}$, $x := x_1$, and $z := x_3$, we obtain an edge $e \in \{x_1x_2, x_2x_3\}$ such that, for $M_1 := M_1' \cup \{e\}$, $D$ is flexible with respect to $V(M_1)$.
    Let~$e_0, e_1$ be edges of $D$ such that for each $i\in\{0,1\}$, we have $|e_i \cap V(M_1)|\equiv i\pmod 2$, and let~$M_2'$ be any matching that is transversal to~$H_2 \setminus D$. We can now define
	   \[M_2 := \left\{\begin{array}{l l}
            M_2' \cup \{e_0\} & \text{if } |V(M_1) \cap V(M_2')| \text{ is odd}, \\
            M_2' \cup \{e_1\} & \text{if } |V(M_1) \cap V(M_2')| \text{ is even},
            \end{array}\right.\]
and we have that~$|V(M_1) \cap V(M_2)|$ is odd.
\end{proof}

Combining Lemmas \ref{lem:exchange} and \ref{lem:matchings with odd intersection} yields our first goal of finding a linear forest decomposition of size 3 with odd parity.

\begin{lemma}\label{lem:linear forests good for paths}
    Let $G$ be an Eulerian graph of maximum degree $4$. Then $G$ admits a linear forest decomposition $\mathcal F = \{F_1, F_2, F_3\}$ with $p(\mathcal F) = 1$.
\end{lemma}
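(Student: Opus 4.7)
The plan is to follow the route set up by \Cref{lem:matchings with odd intersection,lem:3 linear forests from transversal pair,lem:linearforestendsparity}: decompose $G$ into two edge-disjoint polycycles $H_1, H_2$ via \Cref{cor:Petersen generalization undirected}, invoke \Cref{lem:matchings with odd intersection} to select a transversal pair $(M_1, M_2)$ for $(H_1, H_2)$ with $|V(M_1) \cap V(M_2)|$ odd, and feed this pair into \Cref{lem:3 linear forests from transversal pair} to obtain a linear forest decomposition $\mathcal F$ of $G$. Then \Cref{lem:linearforestendsparity} gives $p(\mathcal F) \equiv |V(M_1) \cap V(M_2)| \equiv 1 \pmod 2$, as required.

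The only hypothesis requiring attention is the condition $V(H_1) \cap V(H_2) \neq \emptyset$ in \Cref{lem:matchings with odd intersection}. I would observe that any vertex $v$ of degree $4$ in $G$ must lie in both $V(H_1)$ and $V(H_2)$: each $H_i$ is $2$-regular on its vertex set, so the degree of $v$ in $H_i$ is $0$ or $2$, and the two degrees sum to $4$, forcing both to equal $2$. Hence this intersection is nonempty whenever $G$ contains a degree-$4$ vertex, and the plan succeeds in that case.

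This leaves the case $\Delta(G) \leq 2$, where $G$ is itself a polycycle with components $C_1, \ldots, C_t$. Here I would construct $\mathcal F$ by hand: write $C_1 = (v_1, v_2, \ldots, v_k)$, pick any edge $e_i$ of $C_i$ for each $2 \leq i \leq t$, and set
\[F_1 := (C_1 \setminus \{v_1v_2, v_2v_3\}) \cup \bigcup_{i=2}^{t}(C_i \setminus \{e_i\}), \qquad F_2 := \{v_1v_2\} \cup \{e_i : 2 \leq i \leq t\}, \qquad F_3 := \{v_2v_3\}.\]
Each $F_j$ is a linear forest, they partition $E(G)$, and a direct check yields $R_{13}(\mathcal F) = \{v_3\}$ and $R_{23}(\mathcal F) = \{v_2\}$, giving $r_{13} = r_{23} = 1$, so $p(\mathcal F) = 1$ by \Cref{lem:linear forests parity control}.

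The main (and essentially only) obstacle is recognizing that the matching-pair machinery degenerates when $G$ itself is a polycycle, and supplying the ad hoc construction above to cover that case.
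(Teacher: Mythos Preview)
Your proposal is correct and follows essentially the same route as the paper's proof: decompose into two polycycles, apply \Cref{lem:matchings with odd intersection}, then \Cref{lem:3 linear forests from transversal pair}, and read off $p(\mathcal F)=1$ via \Cref{lem:linearforestendsparity}. The only difference is that the paper reads ``maximum degree $4$'' as $\Delta(G)=4$, so a vertex of degree $4$ is guaranteed and $V(H_1)\cap V(H_2)\neq\emptyset$ holds automatically; your separate handling of the polycycle case $\Delta(G)\le 2$ (which is correct, with $r_{13}=r_{23}=1$ as you computed) is therefore not needed under that reading, though it does no harm.
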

\begin{proof}
    By \Cref{cor:Petersen generalization undirected}, there exists a decomposition of~$G$ into two polycycles~$H_1$ and~$H_2$. As~$G$ has maximum degree~$4$,~$V(H_1) \cap V(H_2) \neq \emptyset$, so by \Cref{lem:matchings with odd intersection}, there exists a transversal pair~$(M_1, M_2)$ of matchings for~$(H_1, H_2)$ with~$|V(M_1) \cap V(M_2)|$ odd. Let~$\mathcal F = \{F_1, F_2, F_3\}$ be a linear forest decomposition of~$G = H_1 \cup H_2$ as in \Cref{lem:3 linear forests from transversal pair}. By \Cref{lem:linearforestendsparity}, we have~$p(\mathcal F) = 1$.
\end{proof}

Recall that every linear forest decomposition of a graph $G$ is also a linear forest odd-cover of $G$.
We now show that the linear forest odd-cover obtained in \Cref{lem:linear forests good for paths} can be extended to a path odd-cover.

\begin{proof}[Proof of \Cref{thm:sum of three paths}]
Choose a linear forest odd-cover~$\mathcal F = \{F_1, F_2, F_3\}$ of~$G$ so that~$p=p(\mathcal F) = 1$ and, subject to this condition,~$r_{12} + r_{13} + r_{23}$ is as small as possible. Such an odd-cover exists by \Cref{lem:linear forests good for paths}. We claim that~$\mathcal F$ is in fact a path odd-cover of~$G$. That is, we show that $r_{12}=r_{13}=r_{23}=1$.

Suppose that~$r_{ij} > 1$ for some $i,j\in\{1,2,3\}$, say $r_{12}>1$ without loss of generality. Since $r_{12}$ is odd by \Cref{lem:linear forests parity control}, we have $r_{12}\geq 3$. As~$t_1$ is odd, there exists a vertex~$u \in R_{12}$ such that the other endpoint of the component of~$F_1$ containing~$u$ is in~$R_{13}$. Now since~$r_{12} \geq 3$, there exists~$v \in R_{12}$ that is not in the component of $F_2$ containing $u$. By our choice of~$u$, we have that~$u$ and~$v$ are also in different components of~$F_1$. But now 
\[\mathcal F' := \left\{F_1 \cup \{uv\}, F_2 \cup \{uv\}, F_3\right\}\]
is a linear forest odd-cover of~$G$, with 
\[r_{12}(\mathcal F') + r_{13}(\mathcal F') + r_{23}(\mathcal F') = r_{12}(\mathcal F) + r_{13}(\mathcal F) + r_{23}(\mathcal F) - 2\]
and~$p(\mathcal F') = p(\mathcal F) = 1$, contradicting our choice of~$\mathcal F$. We conclude that~$r_{12} = r_{13} = r_{23} = 1$, so each~$F_i$ is a path, and the proof is complete.
\end{proof}

We will now deduce \Cref{thm:path odd-cover general}. For this, we need the following technical tools of~\cite{BBCFRY23}, which we rephrase slightly for clarity.
\begin{lemma}[Corollary 18 of~\cite{BBCFRY23}]\label{lem:two paths exceptional case}
    Let~$M \subseteq K_n$ be a matching of size~$2$, and let~$H \subseteq K_n$ be a polycycle. If~$H \oplus M$ does not admit a path odd-cover of size at most~$2$, then~$H \oplus M$ contains all but at most one edge of $K_n$ spanned by~$V(M)$.
\end{lemma}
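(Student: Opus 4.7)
My plan is to reduce, via \Cref{lem:polycycle two paths or cycles}, to a structural analysis of $H \oplus M$ near $V(M)$. Since $H$ is Eulerian and $M$ is a matching of size $2$, the odd-degree vertices of $H \oplus M$ are exactly $V(M) = \{a,b,c,d\}$, where $M = \{ab, cd\}$. Therefore, any path odd-cover of $H \oplus M$ of size $2$ must consist of two paths whose four endpoints together equal $V(M)$, paired as one of the three perfect matchings $\{ab,cd\}$, $\{ac,bd\}$, or $\{ad,bc\}$ on $V(M)$. I would take this partition of $V(M)$ as the primary parameter to aim for.

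The core idea is the following. Suppose there exist two missing edges $f_1, f_2 \in K_n[V(M)] \setminus (H \oplus M)$ that together form a perfect matching $M^*$ on $V(M)$. Then $(H \oplus M) \cup M^*$ is Eulerian, since adding a perfect matching on the odd-degree vertex set flips each parity. Applying \Cref{cor:Petersen generalization undirected} yields a polycycle decomposition of this Eulerian graph, and I would then use \Cref{lem:polycycle two paths or cycles} on each polycycle to obtain a size-$2$ path odd-cover with endpoints positioned at $V(M)$ in the pairing prescribed by $M^*$; by the freedom in choosing transversal edges in \Cref{lem:polycycle two paths or cycles}, one can arrange for $f_1$ and $f_2$ themselves to serve as endpoint-attaching edges. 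Taking the symmetric difference back with $M^* = \{f_1, f_2\}$ then yields a path odd-cover of $H \oplus M$ of size $2$.

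The hypothesis naturally splits into two cases. In the generic case, the missing edges contain a perfect matching on $V(M)$, and the idea above applies directly. The hard case is when no missing perfect matching exists: a quick parity check of the three matchings of $K_4$ shows this forces all missing edges to share a common vertex $v \in V(M)$, so the Eulerian-completion trick fails. I expect this to be the main obstacle. To handle it, I would directly analyze how the cycle components of $H$ meet $V(M)$; the polycycle structure of $H$ forces the three edges of $K_n[V(M)]$ not incident to $v$ to all lie in $H \oplus M$, giving a triangle on the other three vertices which can be combined with a carefully chosen component of $H$ to build the two required paths. Careful subcase analysis based on whether $v$ is isolated in $H \oplus M \cap K_n[V(M)]$ and on the number of cycles of $H$ meeting $V(M)$ would complete the argument.
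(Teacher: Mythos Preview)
The paper does not prove this lemma; it is quoted verbatim as Corollary~18 of~\cite{BBCFRY23} and used as a black box. So there is no in-paper argument to compare against, and I assess your sketch on its own merits.

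Your Case~1 contains a real gap. The graph $G' := (H\oplus M)\cup M^*$ is Eulerian, but it is \emph{not} in general a polycycle: a vertex $v\in V(M)$ with $\deg_H(v)=2$ whose $M$-edge lies outside $H$ has $\deg_G(v)=3$, hence $\deg_{G'}(v)=4$. Consequently \Cref{cor:Petersen generalization undirected} produces \emph{two} polycycles, and applying \Cref{lem:polycycle two paths or cycles} to each yields four paths, not two; moreover, the two paths output by \Cref{lem:polycycle two paths or cycles} share the \emph{same} pair of endpoints, so they are not automatically anchored at $V(M)$. The deeper obstruction is that you cannot always place $f_1$ and $f_2$ into different polycycles of the decomposition. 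Concretely, let $H$ be the $5$-cycle on $\{a,x,y,z,w\}$ and $M=\{ab,cd\}$ with $b,c,d$ new vertices. Then $G=H\cup M$, the missing edges of $K_4[V(M)]$ are $\{ac,ad,bc,bd\}$, and for either admissible choice $M^*\in\{\{ac,bd\},\{ad,bc\}\}$ the graph $G'$ is a $5$-cycle and a $4$-cycle on $V(M)$ sharing only the vertex $a$. This $G'$ has a \emph{unique} decomposition into two polycycles (the two cycles themselves), and both edges of $M^*$ lie in the $4$-cycle. Your ``choose $f_1,f_2$ as the transversal edges'' step is therefore impossible here, even though $G$ does admit a $2$-path odd-cover (e.g.\ $P_1=d\,c\,x\,a\,b$ and $P_2=a\,w\,z\,y\,x\,c$).

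Your Case~2 is only a promissory note (``careful subcase analysis\ldots would complete the argument'') and contains no actual mechanism; in particular, you have not said how the polycycle structure of $H$ forces the triangle you claim, nor how that triangle combines with a component of $H$ to produce two paths whose symmetric difference is $G$.
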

\begin{lemma}[Lemma 19 of~\cite{BBCFRY23}]\label{lem:two good edges}
    Let~$M$ be a matching of size at least~$3$, and let~$H \subseteq K_n$ be a polycycle. Then there exists a submatching~$M' \subseteq M$ of size~$2$ such that~$H\oplus M'$ admits a path odd-cover of size at most~$2$.
\end{lemma}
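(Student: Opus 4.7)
The plan is a contradiction argument combined with a degree count. Suppose for contradiction that no such submatching $M' \subseteq M$ of size $2$ works. Pick any three edges $e_1, e_2, e_3 \in M$ and set $V_i := V(e_i)$; since $M$ is a matching, the $V_i$ are pairwise disjoint and together span six vertices. For distinct $i, j \in \{1,2,3\}$, call an edge of $K_n$ between $V_i$ and $V_j$ a \emph{cross edge} of the pair $\{i,j\}$; there are four cross edges per pair, and twelve cross edges in total, all distinct since different pairs involve different sides.

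For each pair $\{i,j\}$, applying \Cref{lem:two paths exceptional case} to the polycycle $H$ and the matching $\{e_i, e_j\}$ says that $H \oplus \{e_i, e_j\}$ contains at least five of the six edges of $K_n$ spanned by $V_i \cup V_j$, namely $e_i$, $e_j$, and the four cross edges of $\{i,j\}$. Note that $e_i$ lies in $H \oplus \{e_i, e_j\}$ iff $e_i \notin H$ (similarly for $e_j$), while a cross edge $c$ of $\{i,j\}$ lies in $H \oplus \{e_i, e_j\}$ iff $c \in H$ (since $c \notin \{e_i, e_j\}$). Hence the ``at least five'' condition forces one of two scenarios: (a) both $e_i, e_j \notin H$ with at least three cross edges of $\{i,j\}$ in $H$; or (b) exactly one of $e_i, e_j$ lies in $H$ with all four cross edges of $\{i,j\}$ in $H$. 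Scenario (b) is impossible: if say $e_i \in H$, then both endpoints of $e_i$ would have $H$-degree at least $3$, namely one contribution from $e_i$ itself and two from cross edges to $V_j$, contradicting the maximum degree $2$ of the polycycle $H$. So scenario (a) must hold for every pair.

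Summing the conclusion of scenario (a) over the three pairs, $H$ contains at least $3 \cdot 3 = 9$ of the twelve cross edges among $V_1 \cup V_2 \cup V_3$. Each such cross edge has both endpoints in this six-vertex set, so it contributes $2$ to the sum $\sum_{v \in V_1 \cup V_2 \cup V_3} \deg_H(v)$, which is therefore at least $18$. On the other hand, since $H$ is a polycycle we have $\deg_H(v) \leq 2$ for every vertex, bounding the same sum by $2 \cdot 6 = 12$, a contradiction. The main subtlety, which I would verify carefully, is the ruling out of scenario (b); this is the step that genuinely uses the polycycle structure of $H$, and everything else is a short pigeonhole on the twelve cross-edge slots.
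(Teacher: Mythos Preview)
Your argument is correct. The paper does not actually prove this lemma; it is quoted as Lemma~19 of~\cite{BBCFRY23} and used as a black box in the proof of \Cref{thm:path odd-cover general}, so there is no in-paper proof to compare against. That said, your contradiction-plus-degree-count argument is sound: invoking \Cref{lem:two paths exceptional case} on each of the three pairs $\{e_i,e_j\}$, ruling out scenario~(b) via the degree bound $\Delta(H)\le 2$, and then pigeonholing nine cross edges into a six-vertex set of total $H$-degree at most~$12$ gives the desired contradiction. One small point you left implicit: the case where \emph{both} $e_i,e_j\in H$ is also excluded, since then neither contributes to the ``at least five'' count and only four cross edges remain; this is trivial but worth a half-sentence for completeness.
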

\begin{lemma}[Lemma 21 of~\cite{BBCFRY23}]\label{lem:technical path prelim}
    Let~$M \subseteq K_n$ be a matching, and define
    \[t := \left\{\begin{array}{c l}
    \left \lceil \frac{|M|}{2} \right \rceil & \text{if~$|M| \neq 2$}, \\
    2 & \text{if~$|M| = 2$.}
    \end{array} \right.\]
    Then for any Eulerian subgraph~$G' \subseteq K_n$ of maximum degree at most~$2t$, the graph~$G := G' \oplus M$ admits a path odd-cover of size at most~$2t$.
\end{lemma}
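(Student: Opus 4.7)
The plan is to decompose $G'$ into $t$ edge-disjoint polycycles $H_1, \ldots, H_t$ using \Cref{cor:Petersen generalization undirected} (whose hypothesis is met because $G'$ is Eulerian of maximum degree at most $2t$), and then to partition the matching as $M = M_1 \sqcup \cdots \sqcup M_t$ so that each $H_i \oplus M_i$ admits a path odd-cover of size at most $2$. Taking the union of these $t$ local odd-covers yields a path odd-cover of $G = G' \oplus M$ of size at most $2t$, as required.

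For $|M| \leq 1$ the statement is essentially immediate: $t \leq 1$, so $G'$ is either empty or a single polycycle, and a short check combined with \Cref{lem:polycycle two paths or cycles} produces a path odd-cover of size at most $2$ even after toggling the one edge of $M$. When $|M| = 2$ (so $t = 2$), I would first assign both matching edges to $H_2$, obtaining path odd-covers of $H_1$ and of $H_2 \oplus M$ from \Cref{lem:polycycle two paths or cycles} and \Cref{lem:two paths exceptional case}, respectively. If $H_2 \oplus M$ happens to fall into the exceptional configuration of \Cref{lem:two paths exceptional case}, then I would reassign the matching (for instance, placing one edge of $M$ in each $H_i$) or locally re-decompose $G'$ around $V(M)$ to escape the bad structure.

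For $|M| \geq 3$, I would proceed greedily. For $i = 1, \ldots, t-1$, since the residual matching $M \setminus (M_1 \cup \cdots \cup M_{i-1})$ has size at least $|M| - 2(t-2) \geq 3$, \Cref{lem:two good edges} selects a sub-matching $M_i$ of size $2$ such that $H_i \oplus M_i$ admits a path odd-cover of size at most $2$. The leftover sub-matching $M_t$ has size $|M| - 2(t-1) \in \{1,2\}$. The odd case $|M_t| = 1$ is easy: a polycycle together with (or minus) a single edge still admits a size-$2$ path odd-cover by a direct modification of the construction in \Cref{lem:polycycle two paths or cycles}.

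The main obstacle is the even subcase $|M| = 2t$, where $|M_t| = 2$ may force $H_t \oplus M_t$ into the exceptional configuration of \Cref{lem:two paths exceptional case}. The key to overcoming it is to exploit slack at the penultimate step: with at least four matching edges available when choosing $M_{t-1}$, \Cref{lem:two good edges} typically admits several valid choices, each leaving a different residual pair as a candidate for $M_t$. Because the exceptional configuration is extremely restrictive (it forces $H_t \oplus M_t$ to contain all but at most one of the six edges of $K_n$ spanned by $V(M_t)$), I would argue that at least one such choice of $M_{t-1}$ makes both $H_{t-1} \oplus M_{t-1}$ and $H_t \oplus M_t$ admit size-$2$ path odd-covers; making this swap rigorous is the heart of the argument.
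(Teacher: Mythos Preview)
The paper does not actually prove this lemma; it is quoted verbatim as Lemma~21 of \cite{BBCFRY23} and used as a black box in the proof of \Cref{thm:path odd-cover general}. So there is no ``paper's own proof'' to compare against here, and I can only assess your proposal on its merits.

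Your overall architecture---split $G'$ into $t$ polycycles via \Cref{cor:Petersen generalization undirected}, then greedily peel off pairs from $M$ using \Cref{lem:two good edges}---is the natural one and almost certainly matches the argument in \cite{BBCFRY23}. However, the proposal has two genuine gaps. First, you explicitly concede that the terminal even case $|M|=2t$, $|M_t|=2$ is unfinished: you say you ``would argue'' that some choice at step $t-1$ avoids the exceptional configuration of \Cref{lem:two paths exceptional case} for $H_t$, but you do not supply that argument. \Cref{lem:two good edges} only promises \emph{one} good pair out of the $\geq 3$ remaining edges; it gives you no control over the complementary pair, and you have not shown why among the $\binom{4}{2}$ splits at least one is simultaneously good for $H_{t-1}$ and leaves a non-exceptional pair for $H_t$. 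This is exactly the crux, and ``exploit slack'' is not a proof.

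Second, the small and odd cases are more delicate than you indicate. For $|M|=2$ you suggest reassigning or ``locally re-decomposing'' if $H_2\oplus M$ is exceptional, but you never justify why such a reassignment works; the clean observation (used in the paper's Subcase~1c of the proof of \Cref{thm:path odd-cover general}) is that $H_1$ and $H_2$ are edge-disjoint, so at most one of $H_1\oplus M$, $H_2\oplus M$ can contain five of the six edges on $V(M)$, hence \Cref{lem:two paths exceptional case} applies to the other. For $|M_t|=1$ you assert that a polycycle toggled by a single edge ``still admits a size-$2$ path odd-cover by a direct modification of \Cref{lem:polycycle two paths or cycles},'' but that modification is not entirely trivial (one must handle separately whether the edge lies in the polycycle, is disjoint from it, or has one or both endpoints on it), and you should write it out rather than wave at it.
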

\begin{proof}[Proof of \Cref{thm:path odd-cover general}]
    We prove the Eulerian case first. By \Cref{cor:Petersen generalization undirected}, there exists a polycycle decomposition~$\{H_1, \ldots, H_k\}$ of~$G$ with~$k := \frac{\Delta(G)}{2}$. By \Cref{thm:sum of three paths}, we have for each~$i \leq \left \lfloor\frac{k}{2} \right \rfloor$ that~$H_{2i-1} \cup H_{2i}$ admits a path odd-cover of size at most 3. If~$k$ is even, this gives a path odd-cover of~$G$ of size at most~$3\frac{k}{2} = \frac{3}{4}\Delta$. If~$k$ is odd, then we also need to take care of the last polycycle~$H_k$, which admits a path odd-cover of size at most 2 by \Cref{lem:polycycle two paths or cycles}; altogether, this gives a path odd-cover of~$G$ of size at most~$3\frac{k-1}{2} + 2 = \left \lceil \frac{3}{4}\Delta(G) \right \rceil.$

    Now we consider the general case.
    Let~$M$ be a matching such that~$V(M)$ is the set of odd-degree vertices of~$G$, so we have~$|M| = \frac{\vodd(G)}{2}$.
    Then~$G' := G \oplus M$ is Eulerian with (even) maximum degree~$\Delta' \leq \Delta_e(G)$. 
    By \Cref{cor:Petersen generalization undirected}, there exists a decomposition~$\{H_1, \ldots, H_k\}$ of~$G'$ into polycycles, where~$k := \frac{\Delta'}{2}$. Let~$M' \subseteq M$ be a submatching of size~$|M'| = \min \{|M|, 2k\}$. Define 
    \[t := \left\{\begin{array}{c l}
    \left \lceil \frac{|M'|}{2} \right \rceil & \text{if~$|M'| \neq 2$}, \\
    2 & \text{if~$|M'| = 2$.}
    \end{array} \right.\]
    We will find our desired odd-cover in slightly different ways depending on the relationship between the parameters $|M'|$, $t$, and $k$. We consider each case separately.

    \begin{description}
        \item[Case 1:~$t \leq k$.] Then by \Cref{lem:technical path prelim},~$H_1 \oplus \cdots \oplus H_t \oplus M'$ admits a path odd-cover of size at most~$2t$, and by the Eulerian case,~$H_{t+1} \oplus \cdots \oplus H_k$ admits a path odd-cover of size at most~$\left \lceil \frac{3}{2}(k-t)\right \rceil$. Clearly,~$M \setminus M'$ admits a path odd-cover of size~$|M| -  |M'|$ by taking each edge as a path. Altogether, this gives a path odd-cover~$\mathcal P$ of~$G = G' \oplus M = H_1 \oplus \cdots \oplus H_k \oplus M' \oplus (M \setminus M')$ of size
    \[|\mathcal P| \leq |M| - |M'| + 2t + \left \lceil \frac{3}{2}(k-t)\right \rceil.\]
    \begin{description}
        \item[Subcase 1a:~$|M'| \neq 2$ is even.] Then~$|M'| = 2t$, and we have~$t = \min\{k, |M|/2\}$, hence~$\mathcal P$ has size
    \[|\mathcal P| \leq \max\left\{|M|, |M| + \left \lceil \frac{3}{2}\left(k-\frac{|M|}{2}\right)\right \rceil\right\} = \max\left\{|M|, \left \lceil \frac{|M| + 3\Delta'}{4}\right \rceil\right\}.\]
    \item[Subcase 1b:~$|M'| \neq 2$ is odd.] Then~$|M'| = 2t - 1$, and we must have~$|M'| = |M|$, hence~$\mathcal P$ has size
    \[|\mathcal P| \leq |M| + 1 + \left \lceil \frac{3}{2}\left(k-\frac{|M| + 1}{2}\right)\right \rceil = \left \lceil \frac{|M| + 3\Delta' + 1}{4}\right \rceil.\]
    Since~$|M|$ is odd and~$\Delta'$ is even, this is equal to~$\left \lceil \frac{|M| + 3\Delta'}{4}\right \rceil$.
    \item[Subcase 1c:~$|M'| = 2$.] Then~$k \geq t = 2$, hence~$|M| = |M'| = 2$ by the definition of~$M'$. As~$\mathcal P$ may be larger than we require, we consider a separate odd-cover~$\mathcal P'$ of~$G$, as follows. Since~$H_1$ and~$H_2$ are edge-disjoint, at most one of~$H_1 \oplus M$ or~$H_2 \oplus M$ contains all but one edge spanned by~$V(M)$, so by \Cref{lem:two paths exceptional case}, at least one of~$H_1 \oplus M$ or~$H_2 \oplus M$ admits a path odd-cover of size at most~$2$. Without loss of generality, assume~$H_1 \oplus M$ is a symmetric difference of at most two paths. By the Eulerian case,~$H_2 \oplus \cdots \oplus H_k$ admits a path odd-cover of size at most~$\left\lceil \frac{3}{2}(k-1) \right\rceil$. Altogether, this gives a path odd-cover~$\mathcal P'$ of~$G$ of size
        \[|\mathcal P'| \leq 2 + \left \lceil \frac{3}{2}(k-1)\right \rceil = \left \lceil \frac{|M| + 3\Delta'}{4}\right \rceil.\]
    \end{description}
    \item[Case 2:~$t > k$.] Then we necessarily have~$t=2$,~$k=1$, and~$|M'| = 2$. 
    \begin{description}
        \item[Subcase 2a:~$M' = M$.] Then~$G = H_1 \oplus M$ is the symmetric difference of a polycycle and two disjoint edges. If~$\Delta(G) \leq 2$, then~$G$ cannot contain five edges spanned by~$V(M)$, so by \Cref{lem:two paths exceptional case},~$G$ admits a path odd-cover of size at most~$2 = \frac{\vodd(G)}{2}$. If~$\Delta(G) \geq 3$, then~$G$ admits a path odd-cover of size at most~$4 \leq \left \lceil \frac{\vodd(G)/2 + 3\Delta_e(G)}{4}\right \rceil$ by \Cref{lem:technical path prelim}.
        \item[Subcase 2b:~$M' \neq M$.] Then~$|M| \geq 3$, and by \Cref{lem:two good edges}, there exists~$M'' \subseteq M$ of size~$|M''| = 2$ such that~$H_1 + M''$ admits a path odd-cover of size at most~$2$. We now extend this to a path odd-cover of~$G = (H_1 \oplus M'') \oplus (M \setminus M'')$ of size at most~$|M| = \frac{\vodd(G)}{2}$ by taking each of the~$|M| - 2$ edges in~$M \setminus M''$ as a path in itself.
    \end{description}
    \end{description}
    We have now considered all cases. In each of these, we found a path odd-cover of~$G$ of size at most 
    \[\max \left\{\frac{\vodd(G)}{2}, \left \lceil\frac{\vodd(G)/2 + 3\Delta_e(G)}{4} \right \rceil\right\}.\]
\end{proof}

\subsection{Cycle odd-covers} \label{sec:oddcycle}

In this section, we prove \Cref{thm:sum of three cycles}, which says that every Eulerian graph of maximum degree~$4$ admits a cycle odd-cover of size 3. \Cref{thm:cycle odd-cover general} will quickly follow. As we will see, the proof of \Cref{thm:sum of three cycles} is quite a bit more technical than the path case, though our general approach is the same.
As noted in our strategy outline in Section \ref{sec:refinelinearforest}, the main difference is that when we obtain our linear forest decomposition~${\mathcal F = \{F_1, F_2, F_3\}}$, it is not enough to control the parity parameter~$p(\mathcal{F})$, but we also need that each~$\mathcal T_i$ is nonempty.
For this, we prove a version of \Cref{lem:matchings with odd intersection} for even parity which imposes an additional property on our transversal pair~$(M_1, M_2)$. 

\begin{lemma}\label{lem:matchings with even intersection}
	Let~$H_1$ and~$H_2$ be edge-disjoint polycycles such that there exist distinct components~$C_1, C_1'$ of~$H_1$ and~$C_2, C_2'$ of~$H_2$ such that~$V(C_1) \cap V(C_2)$ and~$V(C_1') \cap V(C_2')$ are both nonempty. Then there exists a transversal pair~$(M_1, M_2)$ of matchings for~$(H_1, H_2)$ such that
	\begin{enumerate}[label=(\roman*)]
		\item~$|V(M_1) \cap V(M_2)|$ is even;
		\item there exist edges~$uv_1 \in M_1$ and~$uv_2 \in M_2$ such that~$v_1 \notin V(M_2)$ and~$v_2 \notin V(M_1)$.
	\end{enumerate}
\end{lemma}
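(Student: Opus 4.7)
The plan is to mimic the argument of Lemma \ref{lem:matchings with odd intersection}, exploiting both pairs of intersecting components: the pair $(C_1, C_2)$ will provide a shared vertex $u$ supplying the two edges witnessing (ii), and the pair $(C_1', C_2')$ will provide a shared vertex $w$ at which to run the flexibility argument of Lemma \ref{lem:exchange} to control the parity in (i). To begin, I would pick $u \in V(C_1) \cap V(C_2)$ and $w \in V(C_1') \cap V(C_2')$, then select a neighbor $v_1$ of $u$ in $C_1$ and a neighbor $v_2$ of $u$ in $C_2$; placing $uv_1 \in M_1$ and $uv_2 \in M_2$ will yield the edges demanded by (ii), and $v_1 \ne v_2$ holds automatically since $H_1$ and $H_2$ are edge-disjoint and no two edges in a simple graph can join the same pair of vertices.

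Next, I would complete $M_1$ and $M_2$ to transversal matchings largely as in Lemma \ref{lem:matchings with odd intersection}. On each component of $H_1$ other than $C_1$ and $C_1'$ I would pick a transversal edge avoiding $v_2$, and on each component of $H_2$ other than $C_2$ and $C_2'$ a transversal edge avoiding $v_1$; both are possible because every component is a cycle of length at least three. For the remaining cycles $C_1'$ and $C_2'$, I would write $C_1' = (w_1, w_2, \ldots, w_{c'})$ with $w_1 = w$ and invoke Lemma \ref{lem:exchange} with $C := C_2'$, $x := w_1$, $z := w_3$, and $V$ equal to the vertex set of the partial $M_1$ built so far together with $\{w_1, w_2\}$. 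This selects one of $w_1 w_2$ or $w_2 w_3$ as $M_1 \cap C_1'$ so that $C_2'$ is flexible with respect to $V(M_1)$; flexibility then furnishes edges $e_0, e_1$ in $C_2'$ of opposite parities with respect to $V(M_1)$, and I would choose between them to make $|V(M_1) \cap V(M_2)|$ even, securing (i).

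The main obstacle will be ensuring that the edges chosen inside $C_1'$ and $C_2'$ are compatible with the avoidance constraints $v_2 \notin V(M_1 \cap C_1')$ and $v_1 \notin V(M_2 \cap C_2')$ that (ii) also demands. Since components of each $H_i$ are vertex-disjoint, $v_1 \notin V(C_1')$ and $v_2 \notin V(C_2')$ automatically, so the avoidance can fail only if $v_1 \in V(C_2')$ or $v_2 \in V(C_1')$. At each such juncture Lemma \ref{lem:exchange} offers two candidate edges and the flexibility of $C_2'$ offers two more, and each vertex is incident to at most one edge in any pair; by exploiting the two-fold freedom in the initial choices of $v_1$ and $v_2$, in which neighbor of $w$ is designated $w_2$, and in the cyclic orientation of $C_2'$, one can always pin down simultaneously compatible choices. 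Carrying out this short case analysis is the only substantive departure from the proof of Lemma \ref{lem:matchings with odd intersection}.
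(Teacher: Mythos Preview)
Your high-level plan coincides with the paper's: anchor condition~(ii) at a vertex $u\in V(C_1)\cap V(C_2)$ and run a flexibility argument through a vertex $w\in V(C_1')\cap V(C_2')$ to fix the parity in~(i). But the sentence ``carrying out this short case analysis is the only substantive departure'' is where the proof actually lives, and you have not carried it out. The paper's own argument spends most of its length precisely here, and the analysis is not short.

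The concrete obstruction is this. Once $M_1$ is fixed, the flexibility of $C_2'$ only guarantees the \emph{existence} of edges $e_0,e_1$ of the two parities; it does not let you choose them freely, and the parity you ultimately need is dictated by $|V(M_1)\cap V(M_2')|$, which you do not control at that point. If $V(M_1)\cap V(C_2')=\{v_1\}$, then \emph{every} edge of $C_2'$ with odd intersection with $V(M_1)$ contains $v_1$, so if odd is the parity you need, condition~(ii) fails. Your claim that ``each vertex is incident to at most one edge in any pair'' does not apply to $(e_0,e_1)$, because this is not a fixed pair of edges but an existential guarantee. Your listed freedoms (two choices for $v_1$, two for $v_2$, which neighbor of $w$ is $w_2$, orientation of $C_2'$) do not obviously rule this out: when both neighbors of $u$ in $C_1$ lie in $V(C_2')$, every choice of $v_1$ lands in $V(C_2')$; and Lemma~\ref{lem:exchange} may force the $C_1'$-edge $w_2w_3$ rather than $\omega w_2$, leaving $\omega$ out of $V(M_1)$ and producing exactly $V(M_1)\cap V(C_2')=\{v_1\}$.

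The paper resolves this by splitting into a Case~1 (some neighbor $v_1$ of $u$ in $C_1$ avoids $V(C_2')$) with a further subcase on whether a neighbor $v_2$ of $u$ in $C_2$ avoids $V(C_1')$, and a Case~2 (no such $v_1$ exists) where a chain of symmetry reductions forces $C_1,C_1',C_2,C_2'$ into a rigid alternating structure, after which $(M_1,M_2)$ is written down explicitly with $|V(M_1)\cap V(M_2)|=2$ and without invoking Lemma~\ref{lem:exchange} at all. Your sketch covers only the easiest subcase (essentially Subcase~1a) and leaves the others unaddressed.
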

\begin{proof}
    We consider a number of cases based on the way in which the polycycles overlap.
    \begin{description}
        \item[Case 1: There exists an edge~$uv_1 \in E(C_1)$ with~$u \in V(C_2)$ and~$v_1 \notin V(C_2')$.]
        It suffices to produce a matching~$M_1$ transversal to~$H_1$ such that~$uv_1 \in M_1$,~$V(M_1)$ excludes some neighbor~$v_2$ of~$u$ in~$C_2$, and~$C_2'$ is flexible with respect to~$V(M_1)$. Indeed, let~$e_0, e_1 \in E(C_2')$ be as in the definition of flexible. Let~$M_2'$ be any matching that is transversal to~$H_2 \setminus C_2'$ with~$uv_2 \in M_2'$ and~$v_1 \notin V(M_2')$ (recall that~$H_1$ and~$H_2$ are edge-disjoint, so~$v_1 \neq v_2$). We can now define
	           \[M_2 := \left\{\begin{array}{l l}
                    M_2' \cup \{e_0\} & \text{if } |V(M_1) \cap V(M_2')| \text{ is even}, \\
                    M_2' \cup \{e_1\} & \text{if } |V(M_1) \cap V(M_2')| \text{ is odd},
                    \end{array}\right.\]
            and we have that~$|V(M_1) \cap V(M_2)|$ is even. We also have that~${v_1 \notin V(M_2)}$ since we assumed that~${v_1 \notin V(C_2')}$.
        
        To obtain such a matching~$M_1$, we further distinguish subcases as follows.
        \begin{description}
            \item[Subcase 1a: There exists a neighbor~$v_2$ of~$u$ in~$C_2$ such that~$v_2 \notin V(C_1')$.]
            Since~$C_1'$ and~$C_2'$ intersect, there exists a path~$(x,y,z)$ in~$C_1'$ with~$x \in V(C_2')$. Let~$M_1'$ be a matching transversal to~$H_1 \setminus C_1'$ such that~$uv_1 \in M_1'$ and~$v_2 \notin V(M_1')$ (recall that~$H_1$ and~$H_2$ are edge-disjoint, so~$v_1 \neq v_2$). By \Cref{lem:exchange}, there exists~$e \in \{xy, yz\}$ such that for~$M_1 := M_1' \cup \{e\}$,~$C_2'$ is flexible with respect to~$V(M_1)$. We also have that~$v_2 \notin V(M_1)$ since we assumed that~$v_2 \notin V(C_1')$.
            \item[Subcase 1b: Both neighbors~$v_2$ and~$v_2'$ of~$u$ in~$C_2$ are in~$V(C_1')$.]
            We may assume the additional property that every component~$C_1''$ of~$H_1$ besides~$C_1$ and~$C_1'$ is disjoint from~$C_2'$, as otherwise we could apply Case 1a with~$C_1''$ in place of~$C_1'$. Since~$v_2, v_2' \in V(C_1') \setminus V(C_2')$, and ~$V(C_1') \cap V(C_2')$ is nonempty, there exists an edge~$xy \in E(C_1')$ with~$V(C_2') \cap \{x,y\} = \{x\}$. Without loss of generality, assume~$y \neq v_2$. Let~$M_1$ be any matching transversal to~$H_1$ with~$uv_1, xy \in M_1$ and~$v_2 \notin V(M_1)$. Since~$u \in V(C_2)$ and~$v_1 \notin V(C_2')$, we have that~$V(C_2') \cap V(M_1) = \{x\}$, so~$C_2'$ is flexible with respect to~$V(M_1)$.
        \end{description}
        \item[Case 2: Every edge~$uv \in E(C_1)$ with~$u \in V(C_2)$ has~$v \in V(C_2')$.] We will show that we can always reduce our task to Case 1 unless $H_1$ and $H_2$ intersect in a very particular way. First, we may assume that every edge of the form $uw \in E(C_2)$ with $u \in V(C_1)$ has $w \in V(C_1')$; otherwise, we could apply Case 1 with the roles of $H_1$ and $H_2$ interchanged. In particular, since $V(C_1) \cap V(C_2) \neq \emptyset$, we see that $V(C_1) \cap V(C_2')$ and $V(C_1') \cap V(C_2)$ are both nonempty as well. Therefore, we may also assume that every edge of the form~${vu \in E(C_1)}$ with~${v \in V(C_2')}$ has~$u \in V(C_2)$; otherwise, we could apply Case 1 with the roles of~$C_2$ and~$C_2'$ interchanged.
        This implies that~$C_1$ is an even cycle of the form~$(u_1, v_1, \ldots, u_c, v_c)$ with each~$u_i \in V(C_2)$ and each~$v_i \in V(C_2')$. By the same argument applied with the roles of~$C_1$ and~$C_1'$ interchanged, we may assume that~$C_1'$ is also an even cycle of the form~$(w_1, x_1, \ldots, w_{c'}, x_{c'})$ with each~$w_i \in V(C_2)$ and each~$x_i \in V(C_2')$. Now, by applying the same argument again with the roles of~$H_1$ and~$H_2$ interchanged, we may further assume that~$C_2$ is an even cycle of length~$2c=2c'$ alternating between~$\{u_i : i \in [c]\}$ and~$\{w_i : i \in [c']\}$, and~$C_2'$ is an even cycle of the same length alternating between~$\{v_i : i \in [c]\}$ and~$\{x_i : i \in [c']\}$. Lastly, we may assume that
        \[V(H_1) \cap V(H_2) = V(C_1) \cup V(C_1') = V(C_2) \cup V(C_2');\]
        otherwise, there exist intersecting components~$C_1'' \notin \{C_1, C_1'\}$ of~$H_1$ and~$C_2'' \notin \{C_2, C_2'\}$ of~$H_2$, and we could apply Case 1 with~$(C_1'', C_2'')$ in place of~$(C_1', C_2')$.
        
        Let~$w$ be a neighbor of~$u_1$ in~$C_2$ distinct from~$w_1$, and let~$v$ be a neighbor of~$x_1$ in~$C_2'$ distinct from~$v_1$. We may now choose~$(M_1, M_2)$ to be any transveral pair of matchings for~$(H_1, H_2)$ with~$u_1v_1, w_1x_1 \in M_1$ and~$u_1w, vx_1 \in M_2$. Then~$V(M_1) \cap V(M_2) = \{u_1, x_1\}$ has even size, and (ii) is satisfied with~$(u_1, v_1, w)$ in place of~$(u, v_1, v_2)$.
    \end{description}
    We have now considered all cases, and the proof is complete.
\end{proof}

We now show that applying \Cref{lem:3 linear forests from transversal pair} to the transversal pair obtained in \Cref{lem:matchings with even intersection} yields a linear forest decomposition with the properties we require.

\begin{lemma}\label{lem:linear forests good for cycles}
	Let~$G$ be an Eulerian graph of maximum degree~$4$. Then~$G$ either
	\begin{enumerate}[label=(\roman*)]
		\item admits a linear forest decomposition~$\mathcal F = \{F_1, F_2, F_3\}$ with~$p(\mathcal F) = 0$ and~$t_i(\mathcal F) \geq 2$ for every~$1 \leq i \leq 3$, or
		\item admits a decomposition into a polycycle and a cycle.
	\end{enumerate}
\end{lemma}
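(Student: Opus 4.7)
The plan is to apply \Cref{cor:Petersen generalization undirected} to decompose $G$ into two polycycles $H_1$ and $H_2$, and then either read off a case (ii) decomposition directly or feed $(H_1, H_2)$ into \Cref{lem:matchings with even intersection} and \Cref{lem:3 linear forests from transversal pair} to build the linear forest decomposition required by case (i). Consider the bipartite graph $I$ whose vertex classes are the components of $H_1$ and of $H_2$, with an edge joining two components precisely when they share a vertex.

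Suppose first that $I$ has no matching of size $2$. Then any two edges of $I$ must share an endpoint, so $I$ has a vertex cover of size at most $1$. If the cover is a single component $C$ (say of $H_2$), then every other component of $H_2$ is vertex-disjoint from $H_1$, so $G \setminus C = H_1 \cup (H_2 \setminus C)$ is a vertex-disjoint union of two polycycles and is itself a polycycle. Together with the cycle $C$, this gives the decomposition required by case (ii). If $I$ has no edges at all, then $V(H_1)\cap V(H_2)=\emptyset$, so $G$ is itself a polycycle and peeling off any single cycle again yields case (ii).

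Otherwise $I$ has a matching of size $2$, i.e.\ there exist distinct components $C_1, C_1'$ of $H_1$ and $C_2, C_2'$ of $H_2$ with $V(C_1) \cap V(C_2)$ and $V(C_1') \cap V(C_2')$ both nonempty. I would apply \Cref{lem:matchings with even intersection} to obtain a transversal pair $(M_1, M_2)$ of matchings for $(H_1, H_2)$ with $|V(M_1) \cap V(M_2)|$ even and distinguished edges $uv_1 \in M_1$, $uv_2 \in M_2$ such that $v_1 \notin V(M_2)$ and $v_2 \notin V(M_1)$. Then I apply \Cref{lem:3 linear forests from transversal pair} to get a linear forest decomposition $\mathcal F = \{F_1, F_2, F_3\}$ of $G$, where (using the construction in that proof) $F_1 = (H_1 \setminus M_1) \cup M'$, $F_2 = M_1 \cup (M_2 \setminus M')$, and $F_3 = H_2 \setminus M_2$ for some matching $M' \subseteq M_2$ transversal to the cycle components of $M_1 \cup M_2$. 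By \Cref{lem:linearforestendsparity} we have $p(\mathcal F) = 0$.

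It remains to check $t_i \geq 2$ for each $i$. The key point is that the component of $u$ in $M_1 \cup M_2$ is exactly the length-$2$ path $v_1 u v_2$ (because $v_1 \notin V(M_2)$ and $v_2 \notin V(M_1)$), which is not a cycle, so $u, v_1, v_2 \notin V(M')$. From this a direct $F_i$-degree count places $v_1 \in R_{12}$, $v_2 \in R_{23}$, and $u \in R_{13}$. Moreover, the $F_1$-component of $u$ is precisely the path in $H_1 \setminus M_1$ from $u$ to $v_1$ (what remains of the $H_1$-cycle containing $uv_1$ after deleting that edge), so it witnesses $t_1 \geq 1$; the $F_2$-component of $u$ is the path $v_1 u v_2$ itself, witnessing $t_2 \geq 1$; and the $F_3$-component of $u$ is the path in $H_2 \setminus M_2$ from $u$ to $v_2$, witnessing $t_3 \geq 1$. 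Since $p(\mathcal F) = 0$, \Cref{lem:linear forests parity control} forces each $t_i$ to be even, hence $t_i \geq 2$. The main obstacle I expect is verifying rigorously that no stray $M'$-edges attach to these three distinguished paths, so that they really are the entire $F_i$-components of $u$; this ultimately reduces to the fact that every $F_i$ is a linear forest, combined with the observation that $u, v_1, v_2 \notin V(M')$.
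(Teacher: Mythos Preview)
Your proposal is correct and follows essentially the same route as the paper: decompose into two polycycles, handle the degenerate case (your bipartite intersection graph $I$ having no matching of size $2$ is exactly the paper's hypothesis failure for \Cref{lem:matchings with even intersection}), and otherwise feed the transversal pair from \Cref{lem:matchings with even intersection} through \Cref{lem:3 linear forests from transversal pair} and \Cref{lem:linearforestendsparity}. The one noteworthy difference is that you open up the construction of $M'$ and observe that the $(M_1 \cup M_2)$-component of $u$ is the path $v_1uv_2$, forcing $uv_2 \notin M'$ and hence $uv_2 \in F_2$; the paper instead treats \Cref{lem:3 linear forests from transversal pair} as a black box and must split into the two cases $uv_2 \in F_1$ versus $uv_2 \in F_2$, so your argument is a mild streamlining of theirs.
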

\begin{proof}
	By \Cref{cor:Petersen generalization undirected}, there exists a decomposition of~$G$ into two polycycles~$H_1$ and~$H_2$.
	
	If there do \textit{not} exist components~$C_1, C_1'$ of~$H_1$ and~$C_2, C_2'$ of~$H_2$ such that~$V(C_1) \cap V(C_2)$ and~$V(C_1') \cap V(C_2')$ are both nonempty, then there exists~$i \in \{1,2\}$ and a component~$C$ of~$H_i$ such that each of the remaining components of~$H_i$ is disjoint from~$H_{3-i}$. Then~$H_{3-i} \cup (H_i \setminus C)$ is a polycycle whose union with the cycle~$C$ gives~$G$.
	
	We henceforth assume that such components~$C_1, C_1'$ of~$H_1$ and~$C_2, C_2'$ of~$H_2$ do exist. By \Cref{lem:matchings with even intersection}, there exists a transversal pair $(M_1,M_2)$ of matchings for $(H_1,H_2)$ such that $|V(M_1)\cap V(M_2)|$ is even and there exist edges $uv_1\in M_1$ and $uv_2\in M_2$ such that $v_1\not\in V(M_2)$ and $v_2\not\in V(M_1)$.
    By \Cref{lem:3 linear forests from transversal pair}, there exists a linear forest decomposition~$\mathcal F = \{F_1, F_2, F_3\}$ of $H_1\cup H_2$ such that~$H_1 \setminus M_1 \subseteq F_1 \subseteq (H_1\setminus M_1)\cup M_2$,~$M_1 \subseteq F_2 \subseteq M_1\cup M_2$, and~$H_2 \setminus M_2 = F_3$.
    Since~$|V(M_1) \cap V(M_2)|$ is even, we also have that~$p(\mathcal F) = 0$ by \Cref{lem:linearforestendsparity}. Thus it remains to check that each~$t_i > 0$.
	
    Let~$D_1$ be the component of~$H_1$ containing~$uv_1$, and let~$D_2$ be the component of~$H_2$ containing~$uv_2$. For each~$i \in \{1,2,3\}$, define~$P_i := F_i \cap (D_1 \cup D_2)$. To show that each $t_i>0$, we prove the following claim.
    \begin{claim*}
        For each~$i \in \{1,2,3\}$,~$P_i$ is a component of~$F_i$, and~$P_i \in \mathcal T_i$.
    \end{claim*}
    \begin{subproof}
        Define~$M_2' := M_2 \cap F_1$ and~$M_2'' := M_2 \cap F_2$, so that~$F_1 = (H_1 \setminus M_1) \cup M_2'$ and~$F_2 = M_1 \cup M_2''$. Then~$P_1$ consists of the path~$D_1 \setminus \{uv_1\}$ and possibly the edge~$uv_2$, and~$P_2$ consists of the edge~$uv_1$ and possibly the edge~$uv_2$. Moreover, since $F_3=H_2\setminus M_2$,~$P_3$ is precisely the path~$D_2 \setminus \{uv_2\}$. Clearly, for each $i\in\{1,2,3\}$,~$P_i$ is connected and, since~$F_i$ is a linear forest,~$P_i$ is a path with endpoints in~$\{u, v_1, v_2\}$. To show that~$P_i$ is a component of~$F_i$, it suffices to show that~$\mathrm{end}(P_i) \subseteq \mathrm{end}(F_i)$.

        First, we recall that~$\mathrm{end}(F_1) = V(M_1) \oplus V(M_2')$,~$\mathrm{end}(F_2) = V(M_1) \oplus V(M_2'')$, and~$\mathrm{end}(F_3) = V(M_2)$ by \Cref{lem:linearforestendsparity}. This immediately gives~$v_1 \in \mathrm{end}(F_1) \cap \mathrm{end}(F_2)$ since~$v_1 \in V(M_1) \setminus V(M_2)$, and also $\mathrm{end}(P_3) = \{u,v_2\} \subseteq V(M_2) = \mathrm{end}(F_3)$. The nature of the other endpoints of~$P_1$ and~$P_2$ depends on whether~$uv_2$ is in~$M_2'$ or~$M_2''$, so we consider each case separately.
        \begin{description}
        \item[Case 1:~$uv_2 \in M_2'$.] Then~$P_1 = (D_1 \setminus \{uv_1\}) \cup \{uv_2\}$, and~$P_2 = \{uv_1\}$. The endpoints of~$P_1$ are~$v_1$ and~$v_2$, and the endpoints of~$P_2$ are~$u$ and~$v_1$. Then~$v_2 \in V(M_2') \setminus V(M_1)$ is an endpoint of~$F_1$, and~$u \in V(M_1) \setminus V(M_2'')$ is an endpoint of~$F_2$. That is,~$\mathrm{end}(P_1) = \{v_1, v_2\} \subseteq \mathrm{end}(F_1)$, and~$\mathrm{end}(P_2) = \{u,v_1\} \subseteq \mathrm{end}(F_2)$. Now~$v_1 \in R_{12}$,~$v_2 \in R_{13}$, and~$u \in R_{23}$, so~$P_i \in \mathcal T_i$ for each~$i\in\{1,2,3\}$.
        \item[Case 2:~$uv_2 \in M_2''$.] Then~$P_1 = D_1 \setminus \{uv_1\}$, and~$P_2 = \{uv_1, uv_2\}$. The endpoints of~$P_1$ are~$v_1$ and~$u$, and the endpoints of~$P_2$ are~$v_1$ and~$v_2$. Then~$u \in V(M_1) \setminus V(M_2')$ is an endpoint of~$F_1$, and~$v_2 \in V(M_2'') \setminus V(M_1)$ is an endpoint of~$F_2$. That is,~$\mathrm{end}(P_1) = \{u, v_1\} \subseteq \mathrm{end}(F_1)$, and~$\mathrm{end}(P_2) = \{v_1, v_2\} \subseteq \mathrm{end}(F_2)$. Now~$v_1 \in R_{12}$,~$u \in R_{13}$, and~$v_2 \in R_{23}$, so~$P_i \in \mathcal T_i$ for each~$i\in\{1,2,3\}$.
    \end{description}
    This completes the proof of the claim. 
    \end{subproof}
    Since $P_i\in\mathcal T_i$, we have $t_i>0$, for each $i\in\{1,2,3\}$. This completes the proof of the lemma.
\end{proof}

We are now ready to prove that every Eulerian graph of maximum degree~$4$ is a symmetric difference of at most~$3$ cycles.

\begin{proof}[Proof of \Cref{thm:sum of three cycles}]
    As any polycycle is a symmetric difference of at most two cycles by \Cref{lem:polycycle two paths or cycles}, if~$G$ admits a decomposition into a polycycle and a cycle, then~$G$ admits a cycle odd-cover of size at most~$3$. Therefore, we may assume by \Cref{lem:linear forests good for cycles} that~$G$ admits a linear forest odd-cover~$\mathcal F = \{F_1, F_2, F_3\}$ with~$p(\mathcal F) = 0$ and~$t_i > 0$ for every~$i\in\{1,2,3\}$. Choose such a linear forest odd-cover~$\mathcal F$ so that~$r_{12} + r_{13} + r_{23}$ is as small as possible.
    
    \begin{claim*}
        We have~$r_{12} = r_{13} = r_{23} = 2$.
    \end{claim*}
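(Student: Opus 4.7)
The plan is to deduce $r_{12} = r_{13} = r_{23} = 2$ by combining a lower bound that follows from $t_i \geq 2$ with an upper bound that comes from the minimality of $\mathcal F$. For the lower bound, every component of $F_i$ in $\mathcal T_i$ contributes exactly one endpoint to $R_{ij}$, so $r_{ij} \geq t_i \geq 2$; combined with the evenness of each $r_{ij}$ that comes from $p(\mathcal F) = 0$, this already gives $r_{ij} \geq 2$ for every pair.

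For the upper bound I would argue by contradiction with the extremality of $\mathcal F$. Suppose some $r_{ij} \geq 4$; by symmetry, say $r_{12} \geq 4$. I would then try to pick an edge $uv$ with $u, v \in R_{12}$ and pass to $\mathcal F' := \{F_1 \cup \{uv\}, F_2 \cup \{uv\}, F_3\}$: because $uv$ appears in exactly two of the three linear forests, it cancels in the symmetric difference and $\mathcal F'$ is still an odd-cover of $G$. This operation removes $u$ and $v$ from $\ennd(F_1)$ and from $\ennd(F_2)$, so $r_{12}$ drops by $2$ while $r_{13}$ and $r_{23}$ are untouched. For $\mathcal F'$ to be a valid competitor in the extremal problem, I would need (a) $u$ and $v$ in different components of $F_1$ (so that $F_1 \cup \{uv\}$ is still a linear forest); (b) the same condition for $F_2$; and, to preserve $t_i \geq 2$, (c) if $t_1 = 2$ then $\{u, v\}$ is not the unique pair of $R_{12}$-endpoints coming from $\mathcal T_1$-components, and (d) the analogous statement for $\mathcal T_2$. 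A short case analysis on the types of the $F_1$-components containing $u$ and $v$ shows that $t_1$ only drops when both $u$ and $v$ lie in $\mathcal T_1$-components, in which case it drops by exactly $2$; condition (c) blocks precisely the situation where this drop is harmful, and the same argument handles $t_2$ via (d).

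The main obstacle is showing that a pair $\{u, v\}$ satisfying (a)--(d) always exists when $r_{12} \geq 4$. I plan to use a union bound on the \emph{bad} pairs in $R_{12}$. The pairs failing (a) are exactly the endpoint pairs of the $(r_{12} - t_1)/2$ components of $F_1$ whose two endpoints both lie in $R_{12}$; those failing (b) contribute another $(r_{12} - t_2)/2$ pairs; and (c), (d) each forbid at most one more pair. Hence the total number of bad pairs is at most
\[
    \frac{r_{12} - t_1}{2} + \frac{r_{12} - t_2}{2} + 2 \;\leq\; r_{12},
\]
using $t_i \geq 2$, and this is strictly less than $\binom{r_{12}}{2}$ as soon as $r_{12} \geq 4$. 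So a good pair $\{u, v\}$ exists, the modified $\mathcal F'$ is a linear forest odd-cover with $p(\mathcal F') = 0$ and $t_i(\mathcal F') \geq 2$ for every $i$, and $r_{12}(\mathcal F') + r_{13}(\mathcal F') + r_{23}(\mathcal F')$ is strictly smaller than $r_{12}(\mathcal F) + r_{13}(\mathcal F) + r_{23}(\mathcal F)$, contradicting the minimality of $\mathcal F$. I conclude $r_{12} \leq 2$, so $r_{12} = 2$, and by symmetry $r_{13} = r_{23} = 2$.
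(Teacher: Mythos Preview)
Your argument is correct. The lower bound $r_{ij} \geq t_i \geq 2$ is fine (recall $t_i$ is even since $p(\mathcal F)=0$), and your case analysis confirming that $t_1$ drops only when both $u,v$ lie in $\mathcal T_1$-components is accurate. Your union bound is also sound: the pairs violating (a) are exactly the endpoint-pairs of the $(r_{12}-t_1)/2$ components of $F_1$ with both ends in $R_{12}$, and similarly for (b); conditions (c) and (d) each forbid at most one further pair, giving at most $r_{12}$ bad pairs, strictly fewer than $\binom{r_{12}}{2}$ once $r_{12}\geq 4$.

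This differs from the paper's proof, which is constructive rather than counting-based. The paper fixes $u$ to be the $R_{12}$-endpoint of some $T_1\in\mathcal T_1$, reserves a second $\mathcal T_1$-endpoint $x_1$ and a $\mathcal T_2$-endpoint $x_2\neq u$, and then chooses $v\in R_{12}$ avoiding three explicit vertices (either $\{u,x_1,w\}$ or $\{u,x_1,x_2\}$, depending on whether the other $F_2$-endpoint $w$ of $u$ lies in $R_{12}$). A short case check then verifies $t_i(\mathcal F')>0$ for all $i$. Your union-bound argument is arguably cleaner and avoids the case split on $w$; the paper's approach has the minor advantage of being explicit about which edge $uv$ to add. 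Both routes rely on the same structural facts ($t_i\geq 2$, parity, and the effect of adding $uv$ on $\mathcal T_1,\mathcal T_2$), so the difference is one of presentation rather than substance.
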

    \begin{subproof}
        By \Cref{lem:linear forests parity control}, each~$r_{ij}$ is even, and~$r_{ij} \geq t_i > 0$, so~$r_{ij} \geq 2$. Suppose that some~$r_{ij} > 2$, say~$r_{12} \geq 4$ without loss of generality. We also know that~$t_1, t_2 \geq 2$ by \Cref{lem:linear forests parity control}. Let~$T_1, T_1'$ be distinct components of~$F_1$ in~$\mathcal T_1$, and let~$u, x_1$ be the respective endpoints of~$T_1, T_1'$ in~$R_{12}$. Choose~$T_2 \in \mathcal T_2$ so that its endpoint~$x_2$ in~$R_{12}$ is distinct from~$u$. Since~$u \in R_{12}$, there exists a component of~$F_2$ with one endpoint at~$u$ and one endpoint at some other vertex~$w$. Choose a vertex~$v \in R_{12}$ as follows. If~$w \in R_{12}$, choose~$v \in R_{12} \setminus \{u,x_1, w\}$; otherwise, choose~$v \in R_{12} \setminus \{u,x_1, x_2\}$. Note that~$u$ and~$v$ are both endpoints in distinct components of~$F_1$ and of~$F_2$, so 
    \[\mathcal F' := \{F_1 \cup \{uv\}, F_2 \cup \{uv\}, F_3\}\]
    is a linear forest odd-cover of~$G$ with
    \[r_{12}(\mathcal F') + r_{13}(\mathcal F') + r_{23}(\mathcal F') = r_{12}(\mathcal F) + r_{13}(\mathcal F) + r_{23}(\mathcal F) - 2\]
    and~$p(\mathcal F') = p(\mathcal F) = 0$. Also, we have~$T_1' \in \mathcal T_1(\mathcal F')$ and~$\mathcal T_3(\mathcal F') = \mathcal T_3(\mathcal F) \neq \emptyset$ since~$u$ and~$v$ are not endpoints of~$T_1'$ nor any of the paths in~$\mathcal T_3(\mathcal F)$. Less obviously, we additionally have~$\mathcal T_2(\mathcal F') \neq \emptyset$. Indeed, recall that~$x_2$ is the endpoint of~$T_2$ in~$R_{12}$, and that~$x_2 \neq u$. If~$x_2 \neq v$ as well, then~$T_2 \in \mathcal T_2(\mathcal F')$. On the other hand, if~$x_2 = v$, then by our choice of~$v$, we must have that~$w \in R_{12}$, so~$u$ is not the endpoint of any path in~$\mathcal T_2(\mathcal F)$. Therefore, since~$t_2(\mathcal F) \geq 2$, there exists~$T_2' \in \mathcal T_2(\mathcal F)$ whose endpoints are distinct from~$u$ and~$v$, so~$T_2' \in \mathcal T_2(\mathcal F')$.
    But now~$t_i(\mathcal F') > 0$ for all~$i\in\{1,2,3\}$, contradicting our choice of~$\mathcal F$. This proves the claim.
    \end{subproof}
    
    For each pair~$\{i,j\} \subseteq \{1,2,3\}$, write~$R_{ij} = \{x_{ij}, y_{ij}\}$ and let~$e_{ij} := x_{ij}y_{ij}$. Then
    \[\left\{F_1 \cup \{e_{12}, e_{13}\}, F_2 \cup \{e_{12}, e_{23}\}, F_3 \cup \{e_{13}, e_{23}\}\right\}\]
    is a cycle odd-cover of~$G$.
\end{proof}

\Cref{thm:cycle odd-cover general} now follows readily.
\begin{proof}[Proof of \Cref{thm:cycle odd-cover general}]
    By \Cref{cor:Petersen generalization undirected} with $\Delta := d_1$ and $t := d_2/2$, there exists a polycycle decomposition~$\{H_1, \ldots, H_{d_1/2}\}$ of~$G$ such that $H_{d_2/2 + 1}, \ldots, H_{d_1/2}$ are cycles already. By \Cref{thm:sum of three cycles}, we have for each~$i \leq \lfloor d_2/4 \rfloor$ that~$H_{2i-1} + H_{2i}$ is the symmetric difference of at most~$3$ cycles. If~$d_2/2$ is even, this gives a cycle odd-cover of~$G$ of size at most~$3d_2/4 + (d_1/2 - d_2/2) = d_1/2 + d_2/4$. If~$d_2/2$ is odd, then we also need to take care of the last polycycle~$H_{d_2/2}$, which is a symmetric difference of at most~$2$ cycles by \Cref{lem:polycycle two paths or cycles}. Altogether, this gives a cycle odd-cover of~$G$ of size at most
    \[3\frac{d_2/2 - 1}{2} + 2 + \left(\frac{d_1}{2} - \frac{d_2}{2}\right) = \frac{d_1}{2} + \left \lceil \frac{d_2}{4}\right \rceil.\]
\end{proof}

\section{Concluding remark}\label{sec:conclusion}

    We obtained improved bounds on the combinatorial diameters of partition polytopes (Theorem \ref{thm:improved upper bound}) and on minimum path and cycle odd-covers of  graphs (Theorem \ref{thm:odd cover general}).
    In both cases, our proofs use the decomposition of Eulerian (di)graphs into polycycles (Section \ref{sec:polycycles}) and our main technical contribution is the ``decomposition'' of the union of two edge-disjoint polycycles into three cycles; for partition polytopes, this was phrased as decomposing the product of two $p$-balanced permutations with disjoint supports as a product of three $p$-cycles (Lemma \ref{lem:two permutations three cycles}), and for odd-covers, we expressed the union of two edge-disjoint polycycles as a symmetric difference of three paths and a symmetric difference of three cycles (Theorem \ref{thm:sum of three paths} and Theorem \ref{thm:sum of three cycles}, respectively). 
    
    These latter proofs themselves share similarities in that they are based on the proof of Akiyama, Exoo, and Harary~\cite{AHE81} that every graph with maximum degree 4 has linear arboricity at most 3 (Lemma \ref{lem:3 linear forests from transversal pair}), though this connection is disguised in our proof of Lemma \ref{lem:two permutations three cycles}.
    This naturally leads to the question of whether there is a general framework that encompasses both the diameter problem for partition polytopes and the cycle (and path) odd-cover problem for Eulerian graphs with bounded maximum degree, and whether our proofs can be unified in such a framework. We leave this as an open question.
    
\vspace*{2cm}

\noindent\textbf{Acknowledgments.} The work of Steffen Borgwardt and Abigail Nix was supported by the Air Force Office of Scientific Research [grants FA9550-21-1-0233 and FA9550-24-1-0240] (Complex Networks).  The work of Zden\v{e}k Dvo\v{r}\'ak was supported by the ERC-CZ project LL2005 (Algorithms and complexity within and beyond bounded expansion) of the Ministry of Education of Czech Republic. The work of Bryce Frederickson was supported by the NSF Graduate Research Fellowship Program [grant 1937971] and the National Science Foundation [grant 2247013] (Forbidden and Colored Subgraphs).


\end{document}